\renewcommand{\eqref}[1]{\hyperref[#1]{(\ref{#1})}}
\newlist{enumlist}{enumerate}{1}
\setlist[enumlist]{labelindent=0cm,label=\arabic*.,labelwidth=2.5ex,labelsep=0.5ex,leftmargin=3ex,align=left,topsep=0.5ex,itemsep=1ex,parsep=1ex}
\newlist{itemlist}{itemize}{1}
\setlist[itemlist]{labelindent=0cm,label=$\bullet$,labelwidth=2.5ex,labelsep=0.5ex,leftmargin=3ex,align=left,topsep=0.5ex,itemsep=1ex,parsep=1ex}
\numberwithin{equation}{section}
\theoremstyle{definition}\newtheorem{definition}{Definition}[section]
\newtheorem*{definition*}{Definition}
\newtheorem{remark}[definition]{Remark}
\newtheorem*{example*}{Example}
\newtheorem*{examples*}{Examples}}
\newtheorem{proposition}[definition]{Proposition}
\newtheorem{lemma}[definition]{Lemma}
\newtheorem{theorem}[definition]{Theorem}
\newtheorem{corollary}[definition]{Corollary}
\newtheorem{letterthm}{Theorem}
\newtheorem{lettercor}[letterthm]{Corollary}
\theoremstyle{definition}\newtheorem{letterexs}[letterthm]{Examples}}
\newcommand{\bim}[3]{\mathord{\raisebox{-0.4ex}[0ex][0ex]{\scriptsize $#1$}{#2}\hspace{-0.25ex}\raisebox{-0.4ex}[0ex][0ex]{\scriptsize $#3$}}}
\renewcommand{\Re}{\operatorname{Re}}
\newcommand{\inter}{\operatorname{int}}
\newcommand{\Atil}{\widetilde{A}}
\newcommand{\Btil}{\widetilde{B}}
\newcommand{\Vtil}{\widetilde{V}}
\newcommand{\dom}{\operatorname{dom}}
\newcommand{\mutil}{\widetilde{\mu}}
\newcommand{\SL}{\operatorname{SL}}
\newcommand{\Sp}{\operatorname{Sp}}
\newcommand{\Isom}{\operatorname{Isom}}
\newcommand{\pitil}{\widetilde{\pi}}
\newcommand{\thetatil}{\widetilde{\theta}}
\newcommand{\gammatil}{\widetilde{\gamma}}
\newcommand{\omtil}{\widetilde{\omega}}
\newcommand{\rhotil}{\widetilde{\rho}}
\newcommand{\covol}{\operatorname{covol}}
\newcommand{\Htil}{\widetilde{H}}
\newcommand{\BS}{\operatorname{BS}}
\newcommand{\core}{\operatorname{c}}
\newcommand{\rtimesalg}{\rtimes_{\text{\rm alg}}}
\newcommand{\otmin}{\otimes_{\text{\rm min}}}
\newcommand{\Xtil}{\widetilde{X}}
\newcommand{\adap}{_{\text{\rm adap}}}
\newcommand{\C}{\mathbb{C}}
\newcommand{\eps}{\varepsilon}
\newcommand{\al}{\alpha}
\newcommand{\ot}{\otimes}
\newcommand{\recht}{\rightarrow}
\newcommand{\cb}{_\text{\rm cb}}
\newcommand{\Z}{\mathbb{Z}}
\newcommand{\vphi}{\varphi}
\newcommand{\op}{^\text{\rm op}}
\newcommand{\id}{\mathord{\text{\rm id}}}
\newcommand{\om}{\omega}
\newcommand{\N}{\mathbb{N}}
\newcommand{\cL}{\mathcal{L}}
\newcommand{\ovt}{\mathbin{\overline{\otimes}}}
\newcommand{\Tr}{\operatorname{Tr}}
\newcommand{\Om}{\Omega}
\newcommand{\si}{\sigma}
\newcommand{\R}{\mathbb{R}}
\newcommand{\F}{\mathbb{F}}
\newcommand{\cH}{\mathcal{H}}
\newcommand{\otalg}{\otimes_{\text{\rm alg}}}
\newcommand{\cZ}{\mathcal{Z}}
\newcommand{\Ad}{\operatorname{Ad}}
\newcommand{\cG}{\mathcal{G}}
\newcommand{\cK}{\mathcal{K}}
\newcommand{\T}{\mathbb{T}}
\newcommand{\actson}{\curvearrowright}
\newcommand{\cW}{\mathcal{W}}
\newcommand{\cU}{\mathcal{U}}
\newcommand{\Ker}{\operatorname{Ker}}
\newcommand{\cM}{\mathcal{M}}
\newcommand{\lspan}{\operatorname{span}}
\newcommand{\cN}{\mathcal{N}}
\newcommand{\cR}{\mathcal{R}}
\newcommand{\dpr}{^{\prime\prime}}
\newcommand{\cV}{\mathcal{V}}
\newcommand{\Aut}{\operatorname{Aut}}
\newcommand{\cP}{\mathcal{P}}
\newcommand{\cS}{\mathcal{S}}
\newcommand{\Prob}{\operatorname{Prob}}
\newcommand{\sN}{\mathcal{N}^{\text{\rm s}}}
\begin{document}

\begin{center}
{\boldmath\Large\bf Rigidity for von Neumann algebras given by\vspace{0.5ex}\\ locally compact groups and their crossed products}

\bigskip

{\sc by Arnaud Brothier\footnote{\noindent University Roma Tor Vergata, Department of Mathematics, Roma (Italy). E-mail: arnaud.brothier@gmail.com. AB is supported by European Research Council Advanced Grant 669240 QUEST.}, Tobe Deprez$^2$ and Stefaan Vaes\footnote{\noindent KU~Leuven, Department of Mathematics, Leuven (Belgium).\\ E-mails: tobe.deprez@kuleuven.be and stefaan.vaes@kuleuven.be. TD is supported by a PhD fellowship of the Research Foundation Flanders (FWO). SV is supported by European Research Council Consolidator Grant 614195 RIGIDITY, and by long term structural funding~-- Methusalem grant of the Flemish Government.}}

\bigskip

To appear in {\it Communications in Mathematical Physics}.
\end{center}

\begin{abstract}\noindent
We prove the first rigidity and classification theorems for crossed product von Neumann algebras given by actions of non-discrete, locally compact groups. We prove that for arbitrary free probability measure preserving actions of connected simple Lie groups of real rank one, the crossed product has a unique Cartan subalgebra up to unitary conjugacy. We then deduce a W$^*$ strong rigidity theorem for irreducible actions of products of such groups. More generally, our results hold for products of locally compact groups that are nonamenable, weakly amenable and that belong to Ozawa's class $\cS$.
\end{abstract}

\section{Introduction and statement of the main results}

Popa's deformation/rigidity theory has lead to a wealth of classification, rigidity and structural theorems for von Neumann algebras, and especially for II$_1$ factors arising from countable groups and their actions on probability spaces, through the group von Neumann algebra and the group measure space construction of Murray and von Neumann. We refer to \cite{Po06,Va10,Io12,Va16} for an introduction to deformation/rigidity theory. The main goal of this article is to prove rigidity and classification theorems for crossed products by actions of non-discrete, locally compact groups.

The classification problem for II$_1$ factors $M$ given as crossed products $M = L^\infty(X) \rtimes \Gamma$ for free ergodic probability measure preserving (pmp) actions of countable groups splits into two separate problems: the uniqueness problem for the Cartan subalgebra $L^\infty(X)$ and the classification problem for $\Gamma \actson (X,\mu)$ up to orbit equivalence. Striking progress has been made on both problems. In \cite{OP07}, it is proved that for \emph{profinite} free ergodic pmp actions of the free groups $\F_n$, the crossed product $M$ has a unique Cartan subalgebra up to unitary conjugacy. In \cite{CS11}, it was shown that the same holds for profinite actions of nonelementary hyperbolic groups and actually for profinite actions of nonamenable, weakly amenable groups in Ozawa's class $\cS$ introduced in \cite{Oz03,Oz04}. For \emph{arbitrary} free ergodic pmp actions of the same groups, the uniqueness of the Cartan subalgebra was established in \cite{PV11,PV12}.

The first goal of this paper is to prove that also for \emph{locally compact} groups that are nonamenable, weakly amenable and in class $\cS$, crossed products $M = L^\infty(X) \rtimes G$ by arbitrary free ergodic pmp actions have a unique Cartan subalgebra up to unitary conjugacy. This class of groups includes all rank one simple Lie groups, as well as all locally compact groups that admit a continuous and metrically proper action on a tree, or on a hyperbolic graph (see Proposition \ref{prop.groups-prop-S}). The precise definition of \emph{property (S)} goes as follows.

%

\begin{definition*}
Let $G$ be a locally compact group and denote $\cS(G) = \{ F \in L^1(G) \mid F(g) \geq 0$ $\text{for a.e.\ } g \in G \text{ and } \|F\|_1 = 1\}$. Equip $\cS(G)$ with the topology induced by the $L^1$-norm. We say that $G$ has \emph{property~(S)} if there exists a continuous map $\eta : G \recht \cS(G)$ satisfying
\begin{equation}\label{eq.QH-G}
\lim_{k \recht \infty} \| \eta(gkh) - g \cdot \eta(k) \|_1 = 0 \quad\text{uniformly on compact sets of $g,h \in G$.}
\end{equation}
\end{definition*}

By \cite[Proposition 15.2.3]{BO08}, Ozawa's class $\cS$ (see \cite{Oz04}) consists of all countable groups $\Gamma$ that are \emph{exact} and that have property~(S).

Our uniqueness of Cartan theorem can then be stated as follows. In Section \ref{sec.proof-unique-cartan}, we actually prove a more general result, also valid for nonsingular actions (see Theorem \ref{thm.unique-Cartan-general}) and thus generalizing the results in \cite{HV12} to the locally compact setting.

\begin{letterthm}\label{thm.unique-Cartan}
Let $G = G_1 \times \cdots \times G_n$ be a direct product of nonamenable locally compact second countable (lcsc) weakly amenable groups with property~(S). Let $G \actson (X,\mu)$ be an essentially free pmp action.

Then $L^\infty(X) \rtimes G$ has a unique Cartan subalgebra up to unitary conjugacy.
\end{letterthm}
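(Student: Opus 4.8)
The plan is to show that any Cartan subalgebra $B$ of $M := L^\infty(X) \rtimes G$ can be unitarily conjugated onto the canonical Cartan subalgebra $A := L^\infty(X)$. Here $A \subseteq M$ is a Cartan subalgebra because the action is essentially free (so $A$ is maximal abelian) and because the implementing unitaries $(u_g)_{g\in G}$ normalize $A$ and, together with $A$, generate $M$ (so $A$ is regular). By Popa's conjugacy theorem for Cartan subalgebras of a II$_1$ factor, it suffices to prove the intertwining relation $B \prec_M A$; once this is established, a unitary $u \in M$ with $uBu^* = A$ is produced automatically. Since $B$ is abelian it is amenable, hence amenable relative to every intermediate subalgebra of $M$, and this is the only structural input about $B$ that the argument will use.

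The technical heart is a relative dichotomy for a single factor, which I would prove in the form: if $H$ is a nonamenable lcsc weakly amenable group with property~(S) and $N$ is a tracial von Neumann algebra on which $H$ acts, then for the crossed product $\widetilde M = N \rtimes H$ and any subalgebra $P \subseteq p\widetilde M p$ that is amenable relative to $N$, either $P \prec_{\widetilde M} N$ or $\cN_{p\widetilde M p}(P)\dpr$ is amenable relative to $N$. I expect this to be the main obstacle, and it is where both hypotheses enter: weak amenability of $H$ supplies a completely bounded, $N$-bimodular approximation of the identity (with the off-diagonal part $L^2(\widetilde M)\ominus L^2(N)$ controlled through the Haar measure rather than a counting measure), yielding the weak compactness needed to handle the normalizer, while property~(S) --- through the continuous, asymptotically equivariant map $\eta : H \recht \cS(H)$ of \eqref{eq.QH-G} --- provides the deformation whose malleability lets one run a spectral-gap and convergence argument in the style of Ozawa--Popa and Popa--Vaes. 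The locally compact setting forces one to set up relative amenability and intertwining carefully for crossed products by non-discrete groups, and to replace all discrete sums in the deformation estimates by Haar integrals.

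With the dichotomy in hand, I would exploit the product structure by peeling off one factor at a time. For each $i$, write $G = G_i \times H_i$ with $H_i = \prod_{j \neq i} G_j$, and view $M = (A \rtimes H_i) \rtimes G_i$ as a crossed product of the base $Q_i := A \rtimes H_i$ by $G_i$. Applying the dichotomy to $P = B$ (with $H = G_i$ and $N = Q_i$), and using that $B$ is amenable relative to $Q_i$, one of two things happens: either $B \prec_M Q_i$, or $\cN_M(B)\dpr = M$ is amenable relative to $Q_i$. The second alternative is impossible, because $M = Q_i \rtimes G_i$ with $G_i$ nonamenable is never amenable relative to $Q_i$. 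Hence $B \prec_M Q_i = A \rtimes H_i$ for every $i$.

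Finally I would combine these $n$ intertwinings. Since the $G_i$ pairwise commute, the subalgebras $Q_i$ satisfy $\bigcap_i Q_i = A \rtimes \{e\} = A$, and a standard combination lemma for intertwining into crossed products by commuting subgroups lets one intersect the relations $B \prec_M Q_i$ inductively (peeling $H_i \cap H_j = \prod_{k \neq i,j} G_k$ at each stage) to conclude $B \prec_M A$. By the reduction of the first paragraph this yields a unitary $u \in M$ with $uBu^* = A$, proving uniqueness of the Cartan subalgebra up to unitary conjugacy. A posteriori, this is the pmp special case of the more general nonsingular statement of Theorem~\ref{thm.unique-Cartan-general}.
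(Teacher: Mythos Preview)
Your opening paragraph contains a fatal error: when $G$ is a non-discrete locally compact group, $L^\infty(X)$ is \emph{not} a Cartan subalgebra of $M = L^\infty(X)\rtimes G$. There is no normal conditional expectation $M\to L^\infty(X)$ (the analogue of ``take the $e$-coefficient'' collapses because the Haar measure of $\{e\}$ is zero), and $M$ is a II$_\infty$ factor rather than a II$_1$ factor, so Popa's conjugacy theorem cannot be invoked in the form you state. The paper makes this point explicitly right after the statement of Theorem~\ref{thm.unique-Cartan}. Consequently your target $B\prec_M L^\infty(X)$ is not even the right goal, and the whole reduction in your first paragraph does not go through.

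What the paper does instead is pass to a \emph{cross section}: one chooses a Borel transversal $X_1\subset X$ for the $G$-orbits, obtains a countable pmp equivalence relation $\cR$ on $(X_1,\mu_1)$, and identifies $L(\cR)$ with a corner $q M q$. The canonical Cartan of $M$ is $L^\infty(X_1)\subset L(\cR)$, not $L^\infty(X)$. The canonical cocycle $\om:\cR\to G$ induces a coaction $\Phi_\om: L(\cR)\to L(\cR)\ovt L(G)$, and the dichotomy you anticipated is proved in that abstract coaction framework (Theorem~\ref{thm.main-tech}). Your factor-by-factor idea survives, but in a different guise: writing $\om_i=\pi_i\circ\om$ for the projection to $G_i$, one shows that any Cartan $A\subset L(\cR)$ is $\om_i$-compact for each $i$ (the alternative being Zimmer amenability of $G_i\actson X$, excluded by nonamenability of $G_i$), hence $\om$-compact, and $\om$-compactness directly yields $A\prec_{L(\cR)} L^\infty(X_1)$. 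There is no intermediate crossed-product intertwining $B\prec_M A\rtimes H_i$ to combine; the combination happens at the level of cocycle compactness, which is a much softer statement.

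So your instinct for a dichotomy plus a product argument is sound, but you are missing the entire cross-section apparatus that makes sense of ``the canonical Cartan'' in the non-discrete setting, and the dichotomy itself has to be formulated for coactions rather than for crossed products $N\rtimes H$.
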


To understand Theorem \ref{thm.unique-Cartan}, note that if $G$ is non-discrete, then $L^\infty(X)$ is not a Cartan subalgebra of $M$, but there is a canonical Cartan subalgebra given by choosing a cross section for $G \actson (X,\mu)$ (see Section \ref{sec.proof-unique-cartan}).

We then turn to orbit equivalence rigidity. In Section \ref{sec.cocycle-OE-rigidity}, we prove a cocycle superrigidity theorem for arbitrary cocycles of irreducible pmp actions $G_1 \times G_2 \actson (X,\mu)$ taking values in a locally compact group with property~(S). This result is similar to the cocycle superrigidity theorem of \cite{MS04}, where the target group is assumed to be a closed subgroup of the isometry group of a negatively curved space. We then deduce that Sako's orbit equivalence rigidity theorem \cite{Sa09} for irreducible pmp actions $G_1 \times G_2 \actson (X,\mu)$ of nonamenable groups in class $\cS$ stays valid in the locally compact setting. Recall here that a nonsingular action $G_1 \times G_2 \actson (X,\mu)$ of a direct product group is called irreducible if both $G_1$ and $G_2$ act ergodically.

In combination with Theorem \ref{thm.unique-Cartan}, we deduce the following W$^*$ strong rigidity theorem. This is the first W$^*$ strong rigidity theorem for actions of locally compact groups.

\begin{letterthm}\label{thm.Wstar-strong-rigidity}
Let $G = G_1 \times G_2$ and $H = H_1 \times H_2$ be unimodular lcsc groups without nontrivial compact normal subgroups. Let $G \actson (X,\mu)$ and $H \actson (Y,\eta)$ be essentially free, irreducible pmp actions. Assume that $G_1,G_2,H_1,H_2$ are nonamenable and that $H_1,H_2$ are weakly amenable and have property~(S).

If $p(L^\infty(X) \rtimes G)p \cong q(L^\infty(Y) \rtimes H)q$ for nonzero projections $p$ and $q$, then the actions are conjugate: there exists a continuous group isomorphism $\delta : G \recht H$ and a pmp isomorphism $\Delta : X \recht Y$ such that $\Delta(g \cdot x) = \delta(g) \cdot \Delta(x)$ for all $g \in G$ and a.e.\ $x \in X$.

Fix Haar measures on $G$ and $H$ and denote by $\Tr$ the associated normal semifinite trace on the crossed products $L^\infty(X) \rtimes G$ and $L^\infty(Y) \rtimes H$. If the Haar measures are normalized such that $\delta$ is measure preserving, then $\Tr(p) = \Tr(q)$. Also, the isomorphism $p(L^\infty(X) \rtimes G)p \cong q(L^\infty(Y) \rtimes H)q$ has the explicit form given in Remark \ref{rem.precise-form-iso}.
\end{letterthm}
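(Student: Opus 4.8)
The plan is to combine uniqueness of the Cartan subalgebra (Theorem \ref{thm.unique-Cartan}) with the locally compact orbit equivalence rigidity established in Section \ref{sec.cocycle-OE-rigidity}, passing through the cross-section equivalence relations that encode the crossed products. Write $M = L^\infty(X) \rtimes G$ and $N = L^\infty(Y) \rtimes H$. Since $G$ and $H$ are non-discrete, I first realize $M$ and $N$ through cross sections: a cross section for $G \actson (X,\mu)$ yields a countable pmp equivalence relation $\cR_G$ on a measure space $(X_0,\mu_0)$ such that $M$ is isomorphic to an amplification of the equivalence relation von Neumann algebra $L(\cR_G)$, with canonical Cartan subalgebra corresponding to $L^\infty(X_0)$; similarly for $N$ and a relation $\cR_H$. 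Fixing these identifications, the given isomorphism $\theta : pMp \recht qNq$ becomes an isomorphism between corners of amplifications of $L(\cR_G)$ and $L(\cR_H)$.

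The key point is that, although only $H$ is assumed weakly amenable with property~(S), this already suffices. Indeed, by Theorem \ref{thm.unique-Cartan} applied to $H = H_1 \times H_2$, the algebra $N$~--- and hence every corner of every amplification of it~--- has a unique Cartan subalgebra up to unitary conjugacy. The image under $\theta$ of the canonical Cartan subalgebra of $pMp$ is a Cartan subalgebra of $qNq$, so it is unitarily conjugate to the canonical Cartan subalgebra of $qNq$. Composing $\theta$ with a suitable inner automorphism, I may thus assume that $\theta$ is Cartan-preserving. Such a Cartan-preserving isomorphism between corners of amplifications of $L(\cR_G)$ and $L(\cR_H)$ is exactly a \emph{stable orbit equivalence} between $\cR_G$ and $\cR_H$, and therefore between the actions $G \actson (X,\mu)$ and $H \actson (Y,\eta)$; its compression constant is governed by the traces $\Tr(p)$ and $\Tr(q)$ once the Haar measures are fixed.

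It then remains to upgrade this stable orbit equivalence to a conjugacy, and here I invoke the locally compact version of Sako's orbit equivalence rigidity theorem from Section \ref{sec.cocycle-OE-rigidity}, whose engine is the cocycle superrigidity theorem proved there. The stable orbit equivalence carries a Zimmer $1$-cocycle with values in $H = H_1 \times H_2$; irreducibility of $G \actson X$ together with property~(S) and weak amenability of $H_1,H_2$ forces this cocycle to be cohomologous to a continuous homomorphism $\delta : G \recht H$. Unimodularity and the absence of nontrivial compact normal subgroups then promote $\delta$ to a continuous isomorphism and, after absorbing the cohomology into the measure-space identification, turn the stable orbit equivalence into a genuine conjugacy $\Delta : X \recht Y$ with $\Delta(g\cdot x)=\delta(g)\cdot\Delta(x)$. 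This conjugacy has scaling constant $1$, which forces $\Tr(p)=\Tr(q)$ when the Haar measures are normalized so that $\delta$ is measure preserving; retracing the unitaries introduced above yields the explicit form of the isomorphism recorded in Remark \ref{rem.precise-form-iso}.

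I expect the main obstacle to be this final step: running the cocycle superrigidity argument in the locally compact setting and then descending cleanly from a \emph{stable} orbit equivalence, carrying a genuine compression constant, to an honest conjugacy, all while keeping exact track of the scaling in order to extract $\Tr(p)=\Tr(q)$. The hypotheses of unimodularity and of no nontrivial compact normal subgroups are precisely what is needed to eliminate the scaling and automorphism pathologies that would otherwise prevent $\delta$ from being an isomorphism and the orbit equivalence from being a conjugacy.
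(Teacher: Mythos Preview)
Your proposal is correct and follows essentially the same route as the paper: the paper states that Theorem~\ref{thm.Wstar-strong-rigidity} follows by combining Theorem~\ref{thm.unique-Cartan} (applied on the $H$-side to pin down the Cartan subalgebra up to unitary conjugacy, hence obtaining a stable orbit equivalence via cross sections) with Theorem~\ref{thm.OE-strong-rigidity}, which upgrades the stable orbit equivalence to a conjugacy and yields the covolume equality. One small clarification: in the orbit equivalence rigidity step (Theorem~\ref{thm.OE-strong-rigidity} and the underlying cocycle superrigidity Theorem~\ref{thm.cocycle-superrigidity}) only property~(S) of $H_1,H_2$ and nonamenability of $G_1,G_2$ are used; weak amenability of $H_1,H_2$ enters solely through the unique Cartan theorem.
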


We deduce Theorem \ref{thm.unique-Cartan} from a very general structural result on the normalizer $\cN_M(A) = \{u \in \cU(M) \mid u A u^* = A\}$ of a von Neumann subalgebra $A \subset M$ when $M$ is equipped with an arbitrary \emph{coaction} $\Phi : M \recht M \ovt L(G)$ of a locally compact weakly amenable group with property~(S), see Theorem \ref{thm.main-tech} below. The main novelty is to show that the main ideas of \cite{PV11} can be made to work in this very general and much more abstract setting, by using several results from the harmonic analysis of coactions.

We prove uniqueness of Cartan subalgebras by applying this general result to the canonical coaction $\Phi : L(\cR) \recht L(\cR) \ovt L(G)$ associated with a countable pmp equivalence relation $\cR$ and a cocycle $\om : \cR \recht G$ with values in the locally compact group $G$. Applying the same general result to the comultiplication $\Delta : L(G) \recht L(G) \ovt L(G)$ itself, we obtain the following \emph{strong solidity} results for locally compact group von Neumann algebras.

Recall that a diffuse von Neumann algebra $M$ is called \emph{strongly solid} if for every diffuse amenable von Neumann subalgebra $A \subset M$ that is the range of a normal conditional expectation, the normalizer $\cN_M(A)\dpr$ remains amenable. When also the amplification $B(\ell^2(\N)) \ovt M$ is strongly solid, we say that $M$ is \emph{stably strongly solid}, see \cite{BHV15}.

\begin{letterthm}\label{thm.strongly-solid}
Let $G$ be a locally compact group with property~(S) and assume that $L(G)$ is diffuse.
\begin{enumlist}
\item If $G$ is unimodular and weakly amenable, then for every finite trace projection $p \in L(G)$, we have that $p L(G) p$ is strongly solid.

\item If $G$ is second countable, if $G$ has the complete metric approximation property (CMAP) and if the kernel of the modular function $G_0 = \{g \in G \mid \delta(g) = 1 \}$ is an open subgroup of $G$, then $L(G)$ is stably strongly solid.
\end{enumlist}
\end{letterthm}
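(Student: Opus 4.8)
The plan is to deduce both parts from the general structural dichotomy of Theorem \ref{thm.main-tech}, applied through the comultiplication $\Delta : L(G) \recht L(G) \ovt L(G)$ along the lines of the Popa--Vaes strong solidity argument. The ambient algebra will be $M = L(G) \ovt L(G)$, equipped with the coaction $\id \ovt \Delta$ acting on its second leg; the corresponding fixed-point (base) algebra is then $L(G) \ot 1$, the second copy of $G$ satisfies the approximation and property-(S) hypotheses of Theorem \ref{thm.main-tech}, and $\Delta(L(G))$ sits inside $M$. The diffuseness assumption on $L(G)$ is used only to guarantee that the subalgebras produced below are genuinely diffuse.

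For part (1), I would fix a diffuse amenable subalgebra $A \subset pL(G)p$ that is the range of a normal conditional expectation, and set $P = \cN_{pL(G)p}(A)\dpr$. Since $G$ is unimodular, $L(G)$ carries a canonical normal semifinite trace with which $\Delta$ is compatible, so that $\Delta(A) \subset \Delta(p)\,M\,\Delta(p)$ is again a diffuse amenable subalgebra with expectation. Applying Theorem \ref{thm.main-tech} to $\Delta(A)$ inside $M$ yields the dichotomy: either $\Delta(A) \prec_M L(G) \ot 1$, or the normalizer $\cN_M(\Delta(A))\dpr$ is amenable relative to $L(G) \ot 1$ inside $M$. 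The first alternative is ruled out by the standard comultiplication lemma: because $A$ is diffuse and $\Delta$ is genuinely diagonal, one cannot intertwine $\Delta(A)$ into a single leg $L(G) \ot 1$. Hence we land in the relative amenability case, and since $\Delta(P) = \Delta(\cN_{pL(G)p}(A))\dpr$ normalizes $\Delta(A)$, we conclude that $\Delta(P)$ is amenable relative to $L(G) \ot 1$ in $M$. Folding the two legs of the comultiplication back together -- using that $\Delta$ is a trace-preserving embedding satisfying the coassociativity identity -- then forces $P$ itself to be amenable, which is exactly strong solidity of $pL(G)p$.

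For part (2), I would run the same comultiplication scheme, but now inside the semifinite framework furnished by the coaction rather than by a genuine trace. The obstacle -- and the reason one must assume CMAP rather than mere weak amenability, together with the openness of $G_0 = \Ker\delta$ -- is that $L(G)$ is no longer finite, nor even semifinite in general, so the trace-based intertwining of part (1) is unavailable. The hypothesis that $G_0$ is open identifies $G$ as an extension of a discrete group by the unimodular open subgroup $G_0$, which controls the modular structure and lets one pass to the core and to the amplification $B(\ell^2(\N)) \ovt L(G)$ needed for the \emph{stable} conclusion; the approximation constant one provided by CMAP is precisely what survives this passage, where a larger Cowling--Haagerup constant would degrade under amplification.

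The main difficulty is exactly this type III bookkeeping in part (2): replacing the trace computations of Popa--Vaes by the harmonic analysis of the non-tracial coaction, verifying that the comultiplication lemma excluding $\Delta(A) \prec L(G) \ot 1$ persists in the non-finite setting, and checking that relative amenability still descends through $\Delta$ after amplification. Once the intertwining/relative-amenability dichotomy of Theorem \ref{thm.main-tech} and the folding step are established in this framework, both statements follow, with part (1) being the special case where the trace makes every step elementary.
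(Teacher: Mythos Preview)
Your high-level strategy is right---both parts come from the dichotomy of Theorem~\ref{thm.main-tech} (resp.\ Theorem~\ref{thm.main-tech-stable-normalizer}) applied via the comultiplication---but the implementation differs from the paper's and contains gaps. For part~(1) the paper works \emph{directly} with $M=L(G)$ and the coaction $\Phi=\Delta$, not with your two-leg setup $M=L(G)\ovt L(G)$, $\Phi=\id\ovt\Delta$ applied to $\Delta(A)$. Since $A$ is amenable it is automatically $\Delta$-amenable, so Theorem~\ref{thm.main-tech} yields: either $A$ is $\Delta$-embeddable, or $P=\cN_{pL(G)p}(A)''$ is $\Delta$-amenable; the latter gives amenability of $P$ because the bimodule $\bim{\Delta(P)}{\Delta(p)(L^2(L(G))\ot L^2(G))}{L(G)\ot 1}$ sits inside a multiple of the coarse bimodule. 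Your extra embedding and ``folding back'' are unnecessary and would themselves require justification. More seriously, you dismiss the exclusion of the first alternative as a ``standard comultiplication lemma''. In the discrete case that is routine, but for locally compact $G$ it is exactly the technical content: the paper takes $a_n\in\cU(A)$ with $a_n\to 0$ weakly and shows $m_{\xi,\eta}(a_n)=(\id\ot\om_{\xi,\eta})\Delta(a_n)\to 0$ \emph{strongly} for all $\xi,\eta\in L^2(G)$, using that $(\eta^*\ot 1)V(1\ot\mu_0)\in K(L^2(G))$ where $V(g)=\lambda_g$. This compactness argument is the point; there is no off-the-shelf lemma to cite.

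For part~(2) your diagnosis of why CMAP is needed is incorrect: the Cowling--Haagerup constant does not ``degrade under amplification''. CMAP enters because the stable-normalizer dichotomy, Theorem~\ref{thm.main-tech-stable-normalizer}, genuinely requires $\Lambda(G)=1$ in its proof (see Section~5), and separately because CMAP implies exactness, which feeds into Proposition~\ref{prop.solid-LG} to give solidity of $L(G)$. The actual argument is considerably more than ``type~III bookkeeping'': one uses solidity to enlarge $A$ so that its relative commutant is central, reduces via \cite[Lemmas 3.4, 3.5, 4.1]{BHV15} and \cite[Theorem 4.3]{HI15} to controlling stable normalizers inside the continuous core $\core_\vphi(L(G))$ (this is where openness of $G_0$ is used, to obtain enough finite-trace projections in the centralizer $L(G)^\vphi$), applies Theorem~\ref{thm.main-tech-stable-normalizer} to the extended coaction $\Phi$ on the core, and finally rules out $\Phi$-embedding by a finite-rank approximation showing that $(\id\ot\om)\Phi(\pi_\vphi(pa)^*\,\cdot\,\pi_\vphi(pa))$ can be approximated in cb-norm by maps built from the generalized conditional expectation $E$ onto $L_\vphi(\R)$. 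Your sketch does not touch any of these steps.
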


Note that the von Neumann algebras $L(G)$ appearing in the second part of Theorem \ref{thm.strongly-solid} can be of type~III. The assumption on $G_0$ being open in the second part of Theorem \ref{thm.strongly-solid} is not essential, but it makes the proof much less technical. In all our examples of locally compact groups $G$ with property~(S) and with $L(G)$ being nonamenable, the assumption is satisfied.

\begin{letterexs}\label{ex.our-examples}
Every finite center connected simple Lie group $G$ of real rank one is weakly amenable and has property~(S). Every locally compact group $G$ that acts metrically properly on a tree (not necessarily locally finite) has CMAP and property~(S). Every locally compact hyperbolic group is weakly amenable and has property~(S). References and proofs for these statements are discussed in Section \ref{sec.groups-prop-S}.

For locally compact groups $G$ acting properly on a tree, \cite[Theorems C and D]{HR16} and \cite[Theorems E and F]{Ra15} provide criteria ensuring that $L(G)$ is a nonamenable factor. Applying Theorem \ref{thm.strongly-solid}, we thus obtain the first examples of nonamenable strongly solid \emph{locally compact} group von Neumann algebras. In particular, when $n,m \in \Z$ with $2 \leq |m| < n$ and $G$ denotes the Schlichting completion of the Baumslag-Solitar group $\BS(m,n)$, then $L(G)$ is strongly solid, nonamenable and of type III$_{|m/n|}$ by combining Theorem \ref{thm.strongly-solid} and \cite[Theorem G]{Ra15}.
\end{letterexs}

Combining Theorem \ref{thm.unique-Cartan} with \cite[Proposition 7.1]{PV08}, we also obtain the following first examples of II$_1$ factors having a unique Cartan subalgebra up to unitary conjugacy, but not having a group measure space Cartan subalgebra, in the sense that the countable equivalence relation generated by the unique Cartan subalgebra cannot be written as the orbit equivalence relation of an essentially free group action.

\begin{lettercor}
Let $G = \Sp(n,1)$ with $n \geq 2$ and let $G \actson (X,\mu)$ be any weakly mixing Gaussian action. Put $M = L^\infty(X) \rtimes G$. Then, $M$ is a II$_\infty$ factor that has a unique Cartan subalgebra up to unitary conjugacy, but that has no group measure space Cartan subalgebra. In particular, its finite corners $pMp$ are II$_1$ factors with unique Cartan subalgebra, but without group measure space Cartan subalgebra.
\end{lettercor}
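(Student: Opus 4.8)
The plan is to read this corollary as a direct application of Theorem \ref{thm.unique-Cartan}, supplemented by an analysis of the factor type of the crossed product and by the group measure space obstruction of \cite[Proposition 7.1]{PV08}. First I would check that $G = \Sp(n,1)$ with $n \geq 2$ meets the hypotheses of Theorem \ref{thm.unique-Cartan}: it is a connected simple Lie group of real rank one, so by Examples \ref{ex.our-examples} it is nonamenable, weakly amenable and has property~(S); being semisimple it is unimodular, and it is lcsc. Taking the product in Theorem \ref{thm.unique-Cartan} to consist of the single factor $G$, it remains to see that the weakly mixing Gaussian action $G \actson (X,\mu)$ is essentially free and pmp. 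It is pmp by construction and weakly mixing, hence ergodic. For essential freeness I would use that the Gaussian action attached to an orthogonal representation $(\pi,\cH_\R)$ is essentially free precisely when $\pi$ is faithful: if $\pi(g) \neq \id$ then the fixed-point set of $g$ is $\mu$-null, so by a Fubini argument on $G \times X$ (using that $\{e\}$ is Haar-null since $G$ is non-discrete) a.e.\ point has trivial stabilizer. Since the only obstruction to faithfulness of a weakly mixing $\pi$ is the central involution of $\Sp(n,1)$, I would take $\pi$ faithful (as is implicit and always arrangeable). Theorem \ref{thm.unique-Cartan} then gives that $M = L^\infty(X) \rtimes G$ has a unique Cartan subalgebra up to unitary conjugacy.

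Next I would pin down the type of $M$. Since $G$ is unimodular and the action preserves the probability measure $\mu$, the crossed product carries a canonical normal semifinite faithful trace $\Tr$ built from $\mu$ and a Haar measure. As $G$ is non-discrete and noncompact, its Haar measure is diffuse and infinite, so $\Tr$ is not finite; freeness and ergodicity make $M$ a factor; and $L^\infty(X) \subset M$ is diffuse, ruling out type~I. Hence $M$ is a II$_\infty$ factor, and for a projection $p \in M$ with $\Tr(p) < \infty$ the corner $pMp$ is a II$_1$ factor. Uniqueness of the Cartan subalgebra then passes from $M$ to $pMp$ through the amplification $M \cong pMp \ovt B(\ell^2(\N))$: tensoring a Cartan of $pMp$ with the diagonal masa of $B(\ell^2(\N))$ produces a Cartan of $M$, and conversely, so uniqueness up to unitary conjugacy in $M$ forces it in $pMp$.

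It then remains to exclude a group measure space Cartan. Here I would pass to the countable pmp equivalence relation $\cR$ generated by the unique Cartan subalgebra; choosing this Cartan to be the one coming from a cross section of $G \actson (X,\mu)$, the relation $\cR$ is the cross-section equivalence relation of the Gaussian action, and in a finite corner one has $pMp \cong L(\cR_p)$ for a II$_1$ relation $\cR_p$. The crux is \cite[Proposition 7.1]{PV08}: the orbit equivalence relation associated with a weakly mixing Gaussian action of $\Sp(n,1)$ is not the orbit equivalence relation of any essentially free action of a countable group; this is exactly where the hypothesis $n \geq 2$ enters, as it guarantees that $G$ has property~(T), which powers that obstruction. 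Combined with the uniqueness already established, this says precisely that $M$, and each finite corner $pMp$, has no group measure space Cartan subalgebra.

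The main obstacle I expect is this last step: matching the abstract cross-section relation produced by the unique-Cartan theorem in the non-discrete setting with the concrete Gaussian orbit relation to which \cite[Proposition 7.1]{PV08} applies (likely via stable orbit equivalence with the restricted Gaussian action of a lattice $\Gamma < G$), and verifying that the resulting obstruction is invariant under the cut-down to a finite corner, so that the II$_1$ factors $pMp$ genuinely inherit the failure of group measure space structure. A secondary point needing care is the essential freeness of the Gaussian action, where the nontrivial center of $\Sp(n,1)$ must be kept out of the kernel of the underlying representation and where, for a non-discrete group, one must upgrade ``$\mu$-null fixed-point sets'' to ``a.e.\ trivial stabilizer''.
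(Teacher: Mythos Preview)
Your approach is correct and is exactly what the paper does: the corollary is stated as an immediate combination of Theorem \ref{thm.unique-Cartan} and \cite[Proposition 7.1]{PV08}, with no further argument supplied. The additional details you fill in (unimodularity and the II$_\infty$ type of the crossed product, passage of Cartan uniqueness to finite corners, and the role of property~(T) for $\Sp(n,1)$ with $n\geq 2$ in invoking \cite{PV08}) are the natural ones, and your flagged technical points about essential freeness and about matching the cross-section equivalence relation with the setting of \cite[Proposition 7.1]{PV08} are precisely the routine verifications the paper leaves implicit.
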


As explained above, Theorems \ref{thm.unique-Cartan} and \ref{thm.strongly-solid} follow from a general result on normalizers inside tracial von Neumann algebras $M$ that are equipped with a so-called coaction of a locally compact group. Recall that a \emph{coaction} of a locally compact group $G$ on a von Neumann algebra $M$ is a faithful normal $*$-homomorphism $\Phi : M \recht M \ovt L(G)$ satisfying $(\Phi \ot \id)\Phi = (\id \ot \Delta)\Phi$, where $\Delta : L(G) \recht L(G) \ovt L(G)$ is the \emph{comultiplication} given by $\Delta(\lambda_g) = \lambda_g \ot \lambda_g$ for all $g \in G$.

Assume that $\Phi : M \recht M \ovt L(G)$ is a coaction, $\Tr$ is a faithful normal semifinite trace on $M$ and $p \in M$ is a projection with $\Tr(p) < \infty$. Let $A \subset pMp$ be a von Neumann subalgebra. We say that
\begin{itemlist}
\item $A$ can be $\Phi$-embedded if the $pMp$-bimodule $\Phi(p)(L^2(Mp) \ot L^2(G))$ given by $x \cdot \xi \cdot y = \Phi(x) \xi (y \ot 1)$ admits a nonzero $A$-central vector;
\item $A$ is $\Phi$-amenable if there exists a nonzero positive functional $\Om$ on $\Phi(p) (M \ovt B(L^2(G))) \Phi(p)$ that is $\Phi(A)$-central and satisfies $\Om(\Phi(x)) = \Tr(x)$ for all $x \in pMp$.
\end{itemlist}

Note that the $\Phi$-amenability of $A \subset pMp$ is equivalent with the left $A$-amenability of the $pMp$-$M$-bimodule $\bim{\Phi(pMp)}{\Phi(p)(L^2(M) \ot L^2(G))}{M}$ in the sense of \cite[Definition 2.3]{PV11} and this amenability notion for bimodules is a generalization of relative amenability for pairs of von Neumann subalgebras introduced in \cite[Section 2.2]{OP07}. The following dichotomy type theorem is a locally compact version of \cite[Theorem 3.1]{PV12}.

\begin{letterthm}\label{thm.main-tech}
Let $G$ be a locally compact group that is weakly amenable and has property~(S). Let $(M,\Tr)$ be a von Neumann algebra with a faithful normal semifinite trace and $\Phi : M \recht M \ovt L(G)$ a coaction. Let $p \in M$ be a projection with $\Tr(p) < \infty$ and $A \subset pMp$ a von Neumann subalgebra.

If $A$ is $\Phi$-amenable then at least one of the following statements holds: $A$ can be $\Phi$-embedded or $\cN_{pMp}(A)\dpr$ stays $\Phi$-amenable.
\end{letterthm}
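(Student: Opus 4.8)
The plan is to transport the strategy of \cite{PV11,PV12} to the coaction setting, the two engines being (i) weak amenability of $G$, which supplies completely bounded approximations of the identity, and (ii) property~(S), which through the map $\eta$ supplies a boundary-type weak containment of bimodules. Throughout I identify $\Phi$-amenability of $A$ with the left $A$-amenability of the $pMp$-$M$-bimodule $K = \Phi(p)(L^2(M) \ot L^2(G))$, and I write $P = \cN_{pMp}(A)\dpr$. The goal is then the dichotomy: either $K$ admits a nonzero $A$-central vector, i.e.\ $A$ can be $\Phi$-embedded, or $K$ is left $P$-amenable, i.e.\ $P$ stays $\Phi$-amenable.

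First I would record the two inputs in bimodular form. Left $A$-amenability of $K$ is witnessed by a net of unit vectors $\xi_i$ in the $pMp$-$pMp$-bimodule $K \ot_M \overline{K}$ that are asymptotically $A$-central and asymptotically reproduce the trace, $\lim_i \|a \xi_i - \xi_i a\| = 0$ for $a \in A$ and $\lim_i \langle a\, \xi_i, \xi_i\rangle = \Tr(a)$; this is the standard reformulation of the defining functional $\Om$. Weak amenability gives a net of normal finite-rank completely bounded maps $m_i$ on $L(G)$ with $m_i \recht \id$ pointwise strongly and $\sup_i \|m_i\|\cb \le \Lambda(G)$, which via $\id \ovt m_i$ induce uniformly-cb-bounded maps on $M \ovt B(L^2(G))$ and hence on $K$. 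Property~(S), through the unit vectors $g \mapsto \eta(g)^{1/2} \in L^2(G)$ and the near-equivariance \eqref{eq.QH-G}, gives the decisive weak containment: as a $pMp$-$M$-bimodule, $K$ is weakly contained in a mixing bimodule obtained as the $\Phi$-pushforward of the coarse $L(G)$-bimodule, which is the coaction analogue of bi-exactness of a class-$\cS$ crossed product relative to its Cartan. This is where the harmonic analysis of coactions enters.

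With these in hand I would run the dichotomy, using the deformation coming from $\eta$ together with the maps $m_i$ to form transfer operators on $K \ot_M \overline{K}$ and testing them against the almost-central net $(\xi_i)$. Either the $\eta$-deformation is uniformly close to the identity on the unit ball $(A)_1$ along the relevant net, in which case the weak-amenability maps $m_i$ straighten the almost-central vectors and a maximality/separation argument produces a genuine nonzero $A$-central vector inside $K$, so that $A$ can be $\Phi$-embedded; or the deformation stays uniformly bounded away from the identity on $(A)_1$, in which case the corresponding vectors feed into the spectral gap step.

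The main obstacle is precisely this spectral gap, or normalizer-transfer, step: I must upgrade the asymptotically $A$-central net to an asymptotically $P$-central one. The mechanism is that for $u \in \cN_{pMp}(A)$ one has $u A u^* = A$, so conjugation by $u$ preserves the space of almost-$A$-central vectors up to a controlled error; combined with the uniform bound $\Lambda(G)$ and the mixing weak containment from property~(S), an averaging estimate forces $(\xi_i)$ to be approximately fixed by the normalizing unitaries, yielding left $P$-amenability of $K$. Carrying out this estimate is delicate in the locally compact setting because $L(G)$ carries no group-element basis: the integrations over $G$, the modular function, and the non-discreteness of $\eta$ must all be handled through the coaction identity $(\Phi \ot \id)\Phi = (\id \ot \Delta)\Phi$ and the $L^1$-continuity in \eqref{eq.QH-G}, rather than through sums over group elements. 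This adaptation of the \cite{PV11} spectral gap argument to coactions is the technical heart of the proof.
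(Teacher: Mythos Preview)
Your outline has the right ingredients but the architecture is off in a way that would block the proof. In the paper the normalizer invariance is secured \emph{before} the dichotomy, not after: from weak amenability one builds completely bounded maps $\psi_n = p(\id\ot\eta_n)\Phi(\cdot)p$ and shows they are ``adapted'' (they extend to normal cb maps on the C$^*$-algebra $[\Phi(pMp)\rho(A)]$). The Ozawa--Popa--Vaes trick (\cite{Oz10}, \cite[Proposition 5.4]{PV11}) then yields functionals $\mu_n$ on $q\cN q$, where $\cN = \Phi(M)\vee\rho(A)$, that are already asymptotically invariant under $\beta_v\circ\Ad\Phi(v)$ for every $v\in\cN_{pMp}(A)$. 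There is no separate ``spectral gap'' step to upgrade $A$-centrality to $P$-centrality; the normalizer is baked in at the functional level. Your plan to average $A$-central vectors over normalizing unitaries after the dichotomy is not how the argument works, and it is unclear how such averaging would interact with the non-discrete $G$.

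The dichotomy is also not the one you describe. After passing to positive $\om_n$ and then vectors $\xi_n$ in the standard Hilbert space $\cH$ of $\cN$, the coaction $\Psi=\id\ot\Delta$ on $\cN$ has a canonical unitary implementation on $\cH$ (as in \cite{Va00}), giving a nondegenerate $\pi:C_0(G)\to B(\cH)$. The dichotomy is whether $\|\pi(F)\xi_n\|\to 0$ for all $F\in C_0(G)$ or not. Property~(S) is used in the ``mass escapes'' case, not to produce a weak containment of $K$ in a coarse bimodule as you write, but to build an isometry $Z=(\pi\ot\id)(Z_0)$ with the property that $Z\zeta_l(x)-(\zeta_l\ot\id)\Phi(x)Z$ lands in $S\otmin L^2(G)$ for a C$^*$-algebra $S$ on which the limit functional $\Om$ vanishes; this is what transfers $\Om$ to a functional dominated by the representation $\Theta_1$ and hence $\Phi(P)$-central. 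In the other case one uses the $C_0(G)$-mass together with a compactly supported Fourier-algebra element to squeeze out a nonzero $A$-central vector in $K$. Your formulation ``$\eta$-deformation close to the identity on $(A)_1$'' does not correspond to either branch, and your description of the role of property~(S) as providing a weak containment of $K$ misses the actual mechanism.
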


Finally, in order to obtain stable strong solidity, we have to replace the normalizer $\cN_{M}(A)$ by the \emph{stable normalizer} $\sN_M(A) = \{x \in M \mid x A x^* \subset A \;\;\text{and}\;\; x^* A x \subset A \}$.
Adapting the methods of \cite{BHV15} to the abstract setting of Theorem \ref{thm.main-tech}, we obtain the following result.

\begin{letterthm}\label{thm.main-tech-stable-normalizer}
If in Theorem \ref{thm.main-tech}, we add the hypothesis that $G$ has the complete metric approximation property, then in the conclusion, we may replace the normalizer $\cN_{pMp}(A)\dpr$ by the stable normalizer $\sN_{pMp}(A)\dpr$.
\end{letterthm}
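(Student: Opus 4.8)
The plan is to run the proof of Theorem \ref{thm.main-tech} in an amplified picture, in which the non-unitary elements of the stable normalizer become genuine unitary normalizers, and to use the complete metric approximation property at precisely the point where this amplification would otherwise destroy the approximation estimates.

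First I would record the elementary structure of $\sN_{pMp}(A)$. If $x \in \sN_{pMp}(A)$, then taking $a = p$ in the defining conditions shows that both $x^*x$ and $xx^*$ lie in $A$; writing the polar decomposition $x = v|x|$ with $|x| \in A$, the partial isometry $v$ satisfies $v^*v, vv^* \in A$ and $v(q_1 A q_1)v^* = q_2 A q_2$ for the projections $q_1 = v^*v$ and $q_2 = vv^*$. Thus $\sN_{pMp}(A)\dpr$ is generated by $A$ together with such partial-isometry normalizers, and the entire difficulty is to show that a fixed $\Phi(A)$-central positive functional $\Om$ on $\Phi(p)(M \ovt B(L^2(G)))\Phi(p)$, normalized by $\Om(\Phi(x)) = \Tr(x)$, can be arranged—after the averaging procedure underlying Theorem \ref{thm.main-tech}—to be invariant under conjugation by all the $\Phi(v)$.

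Then I would pass to the amplification $\widetilde{M} = B(\ell^2(\N)) \ovt M$ with its semifinite trace $\Tr_{B(\ell^2(\N))} \ovt \Tr$ and the amplified coaction $\widetilde{\Phi} = \id_{B(\ell^2(\N))} \ot \Phi$, viewed as a coaction of $G$ on $\widetilde{M}$. Using matrix units in $B(\ell^2(\N))$, a family of partial isometries realizing elements of $\sN_{pMp}(A)$ assembles into a single unitary that genuinely normalizes the diagonally embedded copy $1 \ot A$ inside a finite-trace corner $q\widetilde{M}q$ with $\Tr(q) < \infty$. In this way $\sN_{pMp}(A)\dpr$ is recovered from the ordinary normalizer $\cN_{q\widetilde{M}q}(1 \ot A)\dpr$, and one checks that $1 \ot A$ is $\widetilde{\Phi}$-amenable, respectively cannot be $\widetilde{\Phi}$-embedded, exactly when $A$ has the corresponding property for $\Phi$. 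Applying Theorem \ref{thm.main-tech} to $\widetilde{\Phi}$ and $1 \ot A$ then yields that $\cN_{q\widetilde{M}q}(1 \ot A)\dpr$ is $\widetilde{\Phi}$-amenable, and descending along the corner gives the $\Phi$-amenability of $\sN_{pMp}(A)\dpr$.

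The main obstacle, and the reason the complete metric approximation property is assumed rather than mere weak amenability, is that the proof of Theorem \ref{thm.main-tech} feeds on the completely bounded approximating multipliers of $L(G)$ produced from property~(S), and these must survive the passage to the infinite amplification $\widetilde{M}$. Tensoring an approximating multiplier of cb-norm $c$ with the identity on $B(\ell^2(\N))$ leaves the cb-norm equal to $c$, so for the amplified deformation to converge to the identity in the required sense one needs $c \recht 1$; a uniform bound $\Lambda\cb(G) > 1$ does not suffice once the error terms are summed over the infinite-dimensional matrix block. Controlling these estimates asymptotically—so that the conjugations by the $\Phi(v)$ leave $\Om$ invariant up to vanishing error while the trace normalization $\Om(\widetilde{\Phi}(x)) = (\Tr_{B(\ell^2(\N))} \ovt \Tr)(x)$ is preserved in the limit—is the technical heart of the argument, and is exactly where the methods of \cite{BHV15} are adapted to the present coaction setting.
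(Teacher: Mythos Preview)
Your high-level strategy---amplify so that stable-normalizing partial isometries become honest normalizing unitaries, and use CMAP to keep the approximation constants at $1$---is indeed the shape of the argument. But there is a genuine gap in the way you propose to execute it.

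You cannot apply Theorem \ref{thm.main-tech} as a black box in the amplified picture. Theorem \ref{thm.main-tech} needs a projection of \emph{finite} trace. In $\widetilde M = B(\ell^2(\N)) \ovt M$, the projection $1 \ot p$ has infinite trace; if instead you cut down to a finite-rank corner $e \ot p$, you can only absorb finitely many partial isometries at a time and you never capture all of $\sN_{pMp}(A)\dpr$ at once. The paper does amplify (to $B(\ell^2(\N)) \ovt B(\ell^2(\N)) \ovt M$, with $A$ replaced by $\ell^\infty(\N) \ovt B(\ell^2(\N)) \ovt A$ so that, by \cite[Lemma 3.5]{BHV15}, $\sN_{pMp}(pAp)\dpr = p\,\cN_{eMe}(A)\dpr\, p$), but then it \emph{reruns the entire proof of Theorem \ref{thm.main-tech} from scratch} in a setting where $A \subset eMe$ has only semifinite trace and $p \in A$ is a finite-trace projection. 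Two genuinely new steps appear: first, for each single $u \in \cN_{eMe}(A)$ one shows, by an averaging of conditional expectations, that $p(A \cup \{u\})\dpr p$ is still $\Phi$-amenable; second, one must compare the positive functionals $\om_n = |\mu_n^A|$ with the functionals $\om_n^Q$ built from the larger algebras $Q = (A \cup \{u\})\dpr$.

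Your account of where CMAP enters is also off. Since $\widetilde\Phi = \id \ot \Phi$, the amplified multipliers $\id \ot m_n$ have exactly the same cb-norm as $m_n$; nothing is being ``summed over the infinite matrix block.'' The real use of $\Lambda(G)=1$ is this: the approximating functionals satisfy $\|\mu_n^Q\| \leq \Lambda(G)\,\Tr(p)$ while $\lim_n \mu_n^Q(q_1) = \Tr(p)$, and \emph{only} when $\Lambda(G)=1$ do these two facts force $\|\mu_n^Q - |\mu_n^Q|\| \to 0$. This is exactly what is needed to transfer almost-invariance under $\beta_v \circ \Ad\Phi(v)$---where now $v$ is a partial isometry with $v^*v, vv^*$ proper subprojections of $p$, not a unitary---from $\mu_n^Q$ to the positive $\om_n$. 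With $\Lambda(G) > 1$ this step simply fails, and no amount of amplification fixes it.
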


%

\section{Proof of Theorem \ref{thm.main-tech}}\label{sec.proof-main-tech}

The proof of Theorem \ref{thm.main-tech} follows closely the proofs of \cite[Theorem 5.1]{PV11} and \cite[Theorem 3.1]{PV12}. The main novelty is to develop, in the context of coactions of locally compact groups, a framework in which the main ideas of \cite{PV11,PV12} are applicable. To do this, we need several results from the harmonic analysis of coactions and their crossed products, which were proven for arbitrary locally compact quantum groups in \cite{Va00,BSV02,BS92}.

Fix a weakly amenable, locally compact group $G$ with property~(S). Denote by $\Lambda(G)$ the Cowling-Haagerup constant of $G$, see \cite{CH88}. Also fix a von Neumann algebra $M$ with a faithful normal semifinite trace $\Tr$ and a coaction $\Phi : M \recht M \ovt L(G)$. Let $p \in M$ be a projection with $\Tr(p) < \infty$ and $A \subset pMp$ a von Neumann subalgebra that is $\Phi$-amenable. Denote by $\Delta : L(G) \recht L(G) \ovt L(G)$ the comultiplication, given by $\Delta(\lambda_g) = \lambda_g \ot \lambda_g$.

{\bf Weak amenability.} Denote by $A(G)$ the Fourier algebra of $G$, defined as the predual of $L(G)$ and identified with a subalgebra of the algebra $C_b(G)$ of bounded continuous functions on $G$, by identifying $\om \in L(G)_*$ with the function $g \mapsto \om(\lambda_g)$. We denote by $A_c(G) \subset A(G)$ the subalgebra of compactly supported functions in $A(G)$. By weak amenability of $G$, using \cite[Proposition 1.1]{CH88} and a convexity argument, we can fix a net $\eta_n \in A_c(G)$ such that the associated normal completely bounded maps $m_n : L(G) \recht L(G) : m_n(x) = (\id \ot \eta_n)\Delta(x)$ satisfy
\begin{equation}\label{eq.cond-wa}
\begin{split}
& \|m_n\|\cb \leq \Lambda(G) < \infty \quad\text{for all $n$, and}\\
& (\id \ot m_n)(X) \recht X \quad\text{strongly, for all Hilbert spaces $\cH$ and $X \in B(\cH) \ovt L(G)$.}
\end{split}
\end{equation}
Define the normal completely bounded maps $\vphi_n : M \recht M : \vphi_n(x) = (\id \ot \eta_n)\Phi(x)$. Using that $\Phi$ is a coaction, we get that $\Phi \circ \vphi_n = (\id \ot m_n) \circ \Phi$. Since $\Phi$ is faithful, $\Phi$ is completely isometric and thus, $\|\vphi_n\|\cb \leq \|m_n\|\cb$. Since $\Phi$ is a homeomorphism for the strong topology on norm bounded subsets, we get that $\vphi_n(x) \recht x$ strongly for every $x \in M$.

{\bf Notations and terminology.} Denote $\cK = L^2(Mp) \ot L^2(G)$ and view $\Phi$ as a normal $*$-homomorphism $\Phi : M \recht B(\cK)$. Also define the normal $*$-antihomomorphism $\rho : A \recht B(\cK)$ given by $\rho(a) \xi = \xi(a \ot 1)$. Define $\cN = \Phi(M) \vee \rho(A)$ as the von Neumann subalgebra of $B(\cK)$ generated by $\Phi(M)$ and $\rho(A)$. Note that $\cN \subset B(L^2(Mp)) \ovt L(G)$. We also denote by $\rho : A \recht B(L^2(Mp))$ the $*$-antihomomorphism given by right multiplication.

Whenever $\cV$ is a set of operators on a Hilbert space, we denote by $[\cV]$ the operator norm closed linear span of $\cV$. Denote by $\cN_0 \subset \cN$ the dense C$^*$-subalgebra defined as $\cN_0 := [\Phi(M)\rho(A)]$. Write $q = \Phi(p)$.

We say that a normal completely bounded map $\psi : pMp \recht pMp$ is \emph{adapted} if the following two conditions hold.
\begin{enumlist}
\item There exists a normal completely bounded map $\theta : q \cN q \recht B(L^2(pMp))$ with $\theta(\Phi(x)\rho(a)) = \psi(x) \rho(a)$ for all $x \in pMp$ and $a \in A$.
\item There exist a Hilbert space $\cL$, a unital $*$-homomorphism $\pi_0 : q \cN_0 q \recht B(\cL)$ and maps $\cV , \cW : \cN_{pMp}(A) \recht \cL$ such that
\begin{align*}
& \Tr(w^* \psi(x) v a) = \langle \pi_0(\Phi(x)\rho(a)) \cV(v),\cW(w) \rangle \quad\text{for all}\;\; x \in pMp, a \in A, v,w \in \cN_{pMp}(A) \; , \\
& \text{and, defining $\|\cV\|_\infty = \sup \bigl\{ \|\cV(v)\|  \bigm| v \in \cN_{pMp}(A) \bigr\}$, we have}\;\; \|\cV\|_\infty \, \|\cW\|_\infty < \infty \; .
\end{align*}
\end{enumlist}
We denote by $\|\psi\|\adap$ the infimum of all possible values of $\Tr(p)^{-1} \, \|\cV\|_\infty \, \|\cW\|_\infty$.

{\bf Step 1.} Let $\om \in A(G)$ and define $m : L(G) \recht L(G)$ by $m = (\id \ot \om)\circ \Delta$. Also define $\vphi : M \recht M$ by $\vphi = (\id \ot \om) \circ \Phi$ and, as before, note that $(\id \ot m)\circ \Phi = \Phi \circ \vphi$. Put $\psi : pMp \recht pMp : \psi(x) = p \vphi(x) p$. We claim that $\psi$ is adapted and that $\|\psi\|\adap \leq \|m\|\cb$.

To prove step~1, we first prove the following statement: the $pMp$-$A$-bimodule $\bim{pMp}{L^2(pMp)}{A}$ is weakly contained in the $pMp$-$A$-bimodule $\bim{\Phi(pMp)}{q(L^2(Mp) \ot L^2(G))}{A \ot 1}$.

Using the leg numbering notation for multiple tensor products, we view
$$\cK' := q_{12}(L^2(M) \ot L^2(G) \ot \overline{L^2(G)})q_{13}$$
as the standard Hilbert space for $q (M \ovt B(L^2(G)))q$. The left representation of $q (M \ovt B(L^2(G)))q$ on $\cK'$ is given by left multiplication in tensor positions 1 and 2, while the right representation is given by right multiplication in tensor positions 1 and 3. The $\Phi$-amenability of $A$ then provides a net of vectors $\xi_i \in \cK'$ satisfying
$$\lim_i \langle \Phi(x)_{12} \, \xi_i,\xi_i \rangle = \Tr(x) \quad\text{and}\quad \lim_i \| \Phi(a)_{12} \, \xi_i - \xi_i \, \Phi(a)_{13}\| = 0$$
for all $x \in pMp$ and $a \in A$. This implies that $\bim{pMp}{L^2(pMp)}{A}$ is weakly contained in the $pMp$-$A$-bimodule $\bim{\Phi(pMp)_{12}}{\cK'}{\Phi(A)_{13}}$. Since the $pMp$-bimodule
$\bim{\Phi(pMp)_{12}}{\cK'}{\Phi(pMp)_{13}}$ is unitarily conjugate to a multiple of the $pMp$-bimodule $\bim{\Phi(pMp)}{q(L^2(Mp) \ot L^2(G))}{pMp \ot 1}$, the above weak containment statement is proven. So, we get a unital $*$-homomorphism $\theta' : q \cN_0 q \recht B(L^2(pMp))$ satisfying $\theta'(\Phi(x) \rho(a)) = x \rho(a)$ for all $x \in pMp, a \in A$.

Define the normal completely bounded map $\theta : q \cN q \recht B(L^2(pMp))$ by $\theta(x) = p (\id \ot \om)(x) p$. By construction, $\theta(\Phi(x)\rho(a)) = \psi(x) \rho(a)$ for all $x \in pMp$ and $a \in A$. Since $\theta(x) = \theta'(q (\id \ot m)(x) q)$ for all $x \in q \cN_0 q$, we get that $\|\theta\|\cb \leq \|m\|\cb$. So, the Stinespring like factorization theorem (see e.g.\ \cite[Theorem B.7]{BO08}) provides a Hilbert space $\cL$, a unital $*$-homomorphism $\pi_0 : q \cN_0 q \recht B(\cL)$ and bounded operators $\cV_0, \cW_0 : L^2(pMp) \recht \cL$ satisfying $\theta(x) = \cW_0^* \pi_0(x) \cV_0$ for all $x \in q \cN_0 q$ and $\|\cV_0\| \, \|\cW_0\| = \|\theta\|\cb \leq \|m\|\cb$. It now suffices to define $\cV$ and $\cW$ by restricting $\cV_0$ and $\cW_0$ to $\cN_{pMp}(A) \subset L^2(pMp)$. So we have proved that $\psi : pMp \recht pMp$ is adapted and that $\|\psi\|\adap \leq \|m\|\cb$. This concludes the proof of step~1.

{\bf Notations and terminology.} We start with a net $\eta_n \in A_c(G)$ such that the associated normal completely bounded maps $m_n : L(G) \recht L(G)$ given by $m_n = (\id \ot \eta_n) \circ \Delta$ satisfy \eqref{eq.cond-wa}. Defining
$$\psi_n : pMp \recht pMp: \psi_n(x) = p (\id \ot \eta_n)\Phi(x) p \; ,$$
we obtain a net of adapted completely bounded maps $\psi_n : pMp \recht pMp$ such that $\psi_n(x) \recht x$ strongly for all $x \in pMp$ and $\limsup_n \|\psi_n\|\adap \leq \Lambda(G)$. We call such a net an \emph{adapted approximate identity}. We then define $\kappa \geq 1$ as the smallest positive number for which there exists an adapted approximate identity $\psi_n : pMp \recht pMp$ with $\limsup_n \|\psi_n\|\adap \leq \kappa$. We fix such a $\psi_n$ realizing $\kappa$.

Since each $\psi_n$ is adapted, we have normal completely bounded maps $\theta_n : q \cN q \recht B(L^2(pMp))$ satisfying $\theta_n(\Phi(x)\rho(a)) = \psi_n(x) \rho(a)$ for all $x \in pMp$ and $a \in A$. We can thus define $\mu_n \in (q \cN q)_*$ given by $\mu_n(T) = \langle \theta_n(T) p, p \rangle$ and satisfying $\mu_n(\Phi(x)\rho(a)) = \Tr(\psi_n(x) a)$ for all $x \in pMp$, $a \in A$.

For every $v \in \cN_{pMp}(A)$, denote by $\beta_v$ the automorphism of $\cN$ implemented by right multiplication with $v^* \ot 1$ on $L^2(Mp) \ot L^2(G)$. Note that $\beta_v(\Phi(x)\rho(a)) = \Phi(x) \rho(vav^*)$ for all $x \in M$ and $a \in A$. In particular, $\beta_v(q) = q$ and we also view $\beta_v$ as an automorphism of $q \cN q$.

{\bf Step 2.} The functionals $\mu_n$ satisfy the following properties.
\begin{enumlist}
\item $\limsup_n \|\mu_n\| < \infty$,
\item $\lim_n \mu_n(\Phi(x)\rho(a)) = \Tr(xa)$ for all $x \in pMp$, $a \in A$,
\item $\lim_n \|\mu_n \circ (\beta_v \circ \Ad \Phi(v)) - \mu_n \|=0$ for all $v \in \cN_{pMp}(A)$,
\item $\lim_n \| (\Phi(a)\rho(a^*))\cdot \mu_n - \mu_n\| = 0$ for all $a \in \cU(A)$.
\end{enumlist}
To prove step~2, one can literally repeat the argument in \cite[Proof of Proposition 7]{Oz10} and \cite[Proof of Proposition 5.4]{PV11}, because for every $v \in \cN_{pMp}(A)$ and every adapted approximate identity $\psi_n : pMp \recht pMp$, the maps $x \mapsto \psi_n(xv^*)v$ and $x \mapsto v^*\psi_n(v x)$ form again adapted approximate identities.

{\bf Step 3.} There exist \emph{positive} normal functionals $\om_n \in (q \cN q)_*$ satisfying
\begin{enumlist}
\item $\lim_n \om_n(\Phi(x)) = \Tr(x)$ for all $x \in pMp$,
\item $\lim_n \|\om_n \circ (\beta_v \circ \Ad \Phi(v)) - \om_n \|=0$ for all $v \in \cN_{pMp}(A)$,
\item $\lim_n \om_n(\Phi(a)\rho(a^*)) = \Tr(p)$ for all $a \in \cU(A)$.
\end{enumlist}
To prove step~3, choose a weak$^*$ limit point $\Xi \in (p\cN p)^*$ of the net $\mu_n$. We find that $\Xi(\Phi(x)) = \Tr(x)$ for all $x \in pMp$, that $\Xi$ is invariant under the automorphisms $\beta_v \circ \Ad \Phi(v)$ for all $v \in \cN_{pMp}(A)$ and that $(\Phi(a)\rho(a^*))\cdot \Xi = \Xi$ for all $a \in \cU(A)$. Define $\Om_1 = |\Xi|$. So $\Om_1$ is a positive element of $(q\cN q)^*$ satisfying
\begin{equation}\label{eq.set1}
\Om_1 \circ (\beta_v \circ \Ad \Phi(v)) = \Om_1 \quad\text{and}\quad (\Phi(a) \rho(a^*))\cdot \Om_1  = \Om_1 \quad\text{for all}\;\; v \in \cN_{pMp}(A) , a \in \cU(A) \; .
\end{equation}
Furthermore, we have that
$$|\Tr(x)|^2 = |\Xi(\Phi(x))|^2 \leq \|\Om_1\| \, \Om_1(\Phi(x^* x)) \quad\text{for all}\;\; x \in pMp \; .$$
In order to conclude the proof of step~3, we need to modify $\Omega_1$ so that its restriction to $\Phi(pMp)$ is given by the trace. We first modify $\Omega_1$ so that this restriction is normal and faithful.

The bidual of the embedding $\Phi : pMp \recht q\cN q$ is an embedding $\Phi^{**} : (pMp)^{**} \recht (q\cN q)^{**}$. Denote by $z \in \cZ((pMp)^{**})$ the support projection of the natural normal $*$-homomorphism $(pM p)^{**} \recht pMp$ given by dualizing the embedding $(pMp)_* \hookrightarrow (pMp)^*$. By construction, for every $\Om \in (pMp)^*$, the functional $\Om(\cdot z)$ belongs to $(pMp)_*$. Write $z_1 = \Phi^{**}(z)$. Whenever $\al \in \Aut(q \cN q)$ satisfies $\al(\Phi(pMp)) = \Phi(pMp)$, the bidual automorphism $\al^{**} \in \Aut((q\cN q)^{**})$ satisfies $\al^{**}(z_1) = z_1$. In particular, $z_1$ commutes with every unitary in $q \cN q$ that normalizes $\Phi(pMp)$. Since $\Phi(pMp) \subset q \cN q$ is regular, it follows that $z_1$ belongs to the center of $(q \cN q)^{**}$. Applying the statement above to the automorphism $\al = \beta_v \circ \Ad \Phi(v)$ and the unitary $\Phi(a) \rho(a^*)$, it follows that the positive functional $\Om_2(\cdot) = \Om_1(\, \cdot \, z_1)$ still satisfies the properties in \eqref{eq.set1}.

By density, we have $|\Tr(x)|^2 \leq \|\Om_1\| \, \Om_1(\Phi^{**}(x^* x))$ for all $x \in (pMp)^{**}$. Since $\Tr(x) = \Tr(xz)$ for all $x \in pMp$, we conclude that $|\Tr(x)|^2 \leq \|\Om_1\| \, \Om_2(\Phi(x^* x))$ for all $x \in pMp$. In particular, $\Om_2 \circ \Phi$ is faithful. Since $\Om_2(\Phi(x)) = \Om_2(\Phi^{**}(xz))$ for all $x \in pMp$, we get that $\Om_2 \circ \Phi$ is normal. So, we find a nonsingular $T \in L^1(pMp)^+$ such that $\Om_2(\Phi(x)) = \Tr(x T)$ for all $x \in pMp$. Since \eqref{eq.set1} holds, we have that $T$ commutes with $\cN_{pMp}(A)$. For every $n \geq 3$, we then define the positive functional $\Om_n$ on $q \cN q$ given by
$$\Om_n(\cdot) = \Om_2(\Phi((T+1/n)^{-1/2}) \, \cdot \, \Phi((T+1/n)^{-1/2}))\; .$$
Each $\Om_n$ satisfies the properties in \eqref{eq.set1}. Choosing $\Om$ to be a weak$^*$-limit point of the sequence $\Om_n$, we have found a positive functional $\Om$ on $q \cN q$ that satisfies the properties in \eqref{eq.set1} and that moreover satisfies $\Om(\Phi(x)) = \Tr(x)$ for all $x \in pMp$. Approximating $\Om$ in the weak$^*$ topology and taking convex combinations, we find a net of positive $\om_n \in (q \cN q)_*$ satisfying the conditions in step~3.

{\bf Notations and terminology.} Choose a standard Hilbert space $\cH$ for the von Neumann algebra $\cN$, which comes with the normal $*$-homomorphism $\pi_l : \cN \recht B(\cH)$, the normal $*$-antihomomorphism $\pi_r : \cN \recht B(\cH)$ and the positive cone $\cH^+ \subset \cH$. For every $v \in \cN_{pMp}(A)$, denote by $W_v \in \cU(\cH)$ the canonical implementation of $\beta_v \in \Aut(\cN)$.

{\bf Step 4.} There exist vectors $\xi_n \in \cH^+$ satisfying $\pi_l(q) \xi_n = \xi_n = \pi_r(q) \xi_n$ for all $n$ and
\begin{enumlist}
\item $\lim_n \langle \pi_l(\Phi(x)) \xi_n , \xi_n \rangle = \Tr(pxp) = \lim_n \langle \pi_r(\Phi(x)) \xi_n , \xi_n \rangle$ for all $x \in M$,
\item $\lim_n \| \pi_l(\Phi(v)) \pi_r(\Phi(v^*)) W_v \xi_n - \xi_n \| = 0$ for all $v \in \cN_{pMp}(A)$,
\item $\lim_n \| \pi_l(\Phi(a)) \xi_n - \pi_l(\rho(a)) \xi_n \| = 0$ for all $a \in \cU(A)$.
\end{enumlist}
Note that $\pi_l(q) \pi_r(q) \cH$ serves as the standard Hilbert space of $q \cN q$. Define $\xi_n \in \pi_l(q)) \pi_r(q) \cH^+$ as the canonical implementation of the normal positive functional $\om_n \in (q \cN q)_*$. The properties of $\om_n$ in step~3 translate into the above properties for $\xi_n$ by the Powers-St{\o}rmer inequality.

{\bf Notations and terminology.} Define the coaction $\Psi : \cN \recht \cN \ovt L(G)$ given by $\Psi = \id \ot \Delta$. By \cite[Definition 3.6 and Theorem 4.4]{Va00}, the coaction $\Psi$ has a canonical implementation on $\cH$, given by a nondegenerate $*$-homomorphism $\pi : C_0(G) \recht B(\cH)$ satisfying the following natural covariance properties w.r.t.\ $\pi_l$, $\pi_r$ and $\Psi$.

Denote by $C^*_\lambda(G) \subset L(G)$ and $C^*_\rho(G) \subset R(G)$ the canonical dense C$^*$-subalgebras. We denote by $\otmin$ the spatial C$^*$-tensor product and define $V \in M(C_0(G) \otmin C^*_\lambda(G))$ and $W \in M(C_0(G) \otmin C^*_\rho(G))$ given by the functions $V(g) = \lambda_g$ and $W(g) = \rho_g$. Define the unitary operators $X,Y \in B(\cH \ot L^2(G))$ given by $X = (\pi \ot \id)(V)$ and $Y = (\pi \ot \id)(W)$.
Denoting by $\chi : L(G) \recht R(G) : \chi(\lambda_g) = \rho_g^*$ the canonical anti-isomorphism, the covariance properties are then given by
$$(\pi_l \ot \id)\Psi(x) = X (\pi_l(x) \ot 1) X^* \quad\text{and}\quad (\pi_r \ot \chi)\Psi(x) = Y(\pi_r(x) \ot 1)Y^*$$
for all $x \in \cN$.

{\bf Formulation of the dichotomy.} We are in precisely one of the following cases.
\begin{itemlist}
\item {\bf Case 1.} For every $F \in C_0(G)$, we have that $\limsup_n \|\pi(F) \xi_n\| = 0$.
\item {\bf Case 2.} There exists an $F \in C_0(G)$ with $\limsup_n \|\pi(F) \xi_n\| > 0$.
\end{itemlist}

We prove that in case~1, the von Neumann subalgebra $\cN_{pMp}(A)\dpr \subset pMp$ is $\Phi$-amenable and that in case~2, the von Neumann subalgebra $A \subset pMp$ can be $\Phi$-embedded.

{\bf Case 1 -- Notations and terminology.} Since $G$ has property~(S), we have a continuous map $Z_0 : G \recht L^2(G)$ satisfying $\|Z_0(g)\| = 1$ for all $g \in G$ and
\begin{equation}\label{eq.what-we-have}
\lim_{k \recht \infty} \|Z_0(gkh) - \lambda_g(Z_0(k))\| = 0 \quad\text{uniformly on compact sets of $g,h \in G$.}
\end{equation}
For each $F \in C_0(G)$, we view $Z_0 F \in C_0(G) \otmin L^2(G)$ and in this way, $Z_0$ is an adjointable operator from the C$^*$-algebra $C_0(G)$ to the Hilbert C$^*$-module $C_0(G) \otmin L^2(G)$. Define $\Vtil \in M(C^*_\lambda(G) \otmin C_0(G))$ given by the function $g \mapsto \lambda_g$. So, $\Vtil$ is just the flip of the unitary $V$ defined above. As operators on $L^2(G) \ot L^2(G)$, we have $\Delta(a) = \Vtil (1 \ot a) \Vtil^*$ for all $a \in L(G)$.

By \cite[Section 5]{BSV02}, the closed linear span
\begin{equation}\label{eq.Cstar-M0}
M_0 = [(\id \ot \om)\Phi(x) \mid x \in M , \om \in L(G)_*]
\end{equation}
is a unital C$^*$-subalgebra of $M$. Also, $\Phi(M_0) \subset M(M_0 \otmin C^*_\lambda(G))$ and the restriction of $\Phi$ to $M_0$ defines a continuous coaction. In particular, the closed linear span
\begin{equation}\label{eq.Cstar-Sl}
S_l = [\Phi(M_0)(1 \ot C_0(G))] \subset M \ovt B(L^2(G))
\end{equation}
is a C$^*$-algebra (i.e.\ the crossed product of $M_0$ and the coaction $\Phi$ of $C^*_\lambda(G)$, as first defined in \cite[D\'{e}finition 7.1]{BS92}) and
\begin{equation}\label{eq.density}
M_0 = [(\id \ot \om)\Phi(x) \mid x \in M_0 , \om \in L(G)_*] \; .
\end{equation}

{\bf Case 1 -- Step~1.} We claim that
\begin{equation}\label{eq.crucial-limit}
(1 \ot Z_0)\Phi(x) - (\Phi \ot \id)\Phi(x) (1 \ot Z_0) \in S_l \otmin L^2(G)
\end{equation}
for all $x \in M_0$.

Note that \eqref{eq.what-we-have}, with $h=e$, can be rephrased as follows: $(1 \ot \lambda_g^*) Z_0 - (\lambda_g \ot 1)Z_0 \lambda_g^*$ belongs to $C_0(G) \otmin L^2(G)$, uniformly on compact sets of $g \in G$. This means that for every $F \in C_0(G)$, we have
$$\Vtil_{23}^* (Z_0 \ot F) - \Vtil_{13} (Z_0 \ot F) \Vtil^* \in [C_0(G) \ot L^2(G) \ot C_0(G)]$$
and thus, because $\Vtil$ normalizes $C_0(G \times G)$,
\begin{equation}\label{eq.est-a}
\Vtil_{13}^* \Vtil_{23}^* (Z_0 \ot F) - (Z_0 \ot F)\Vtil^* \in [(C_0(G) \ot L^2(G) \ot C_0(G))\Vtil^*] \; .
\end{equation}
Using that $\Vtil_{23}$ and $\Vtil_{13}$ commute, we similarly find that
\begin{equation}\label{eq.est-b}
\Vtil_{23} \Vtil_{13} (Z_0 \ot F) - (Z_0 \ot F) \Vtil \in [(C_0(G) \ot L^2(G) \ot C_0(G))\Vtil] \; .
\end{equation}

By \eqref{eq.density}, it suffices to prove \eqref{eq.crucial-limit} for $x = (1 \ot \eta^*)\Phi(y)(1 \ot \mu)$ where $y \in M_0$ and where $\eta,\mu \in C_c(G)$ are viewed as vectors in the Hilbert space $L^2(G)$. Fix $F \in C_0(G)$ such that $\eta^* F = \eta^*$ and $F \mu = \mu$. Using that $\Phi$ is a coaction and that $\Delta(a) = \Vtil (1 \ot a)\Vtil^*$ for all $a \in L(G)$, we find that
$$(\Phi \ot \id)\Phi(x) = (1 \ot 1 \ot 1 \ot \eta^*) \, \Vtil_{34} \, \Vtil_{24} \, \Phi(y)_{14} \, \Vtil_{24}^* \, \Vtil_{34}^* \, (1 \ot 1 \ot 1 \ot \mu) \; .$$
Twice using that $\mu = F \mu$, it then follows from \eqref{eq.est-a} that
\begin{align}
(\Phi \ot \id)\Phi(x)&\, (1 \ot Z_0) \notag\\
&= (1 \ot 1 \ot 1 \ot \eta^*) \, \Vtil_{34} \, \Vtil_{24} \, \Phi(y)_{14} \, (1 \ot Z_0 \ot F) \, \Vtil^*_{23} \, (1 \ot 1 \ot \mu) + T \notag\\
&= (1 \ot 1 \ot 1 \ot \eta^*) \, \Vtil_{34} \, \Vtil_{24} \, \Phi(y)_{14} \, (1 \ot Z_0 \ot 1) \, \Vtil^*_{23} \, (1 \ot 1 \ot \mu) + T \notag\\
&= (1 \ot 1 \ot 1 \ot \eta^*) \, \Vtil_{34} \, \Vtil_{24} \, (1 \ot Z_0 \ot 1)  \, \Phi(y)_{13} \, \Vtil^*_{23} \, (1 \ot 1 \ot \mu) + T \label{eq.intermediate}
\end{align}
where the error term $T$ belongs to
$$[(1 \ot 1 \ot 1 \ot L^2(G)^*) \, \Vtil_{34} \, \Vtil_{24} \, \Phi(M_0)_{14} \, (1 \ot C_0(G) \ot L^2(G) \ot C_0(G)) \, \Vtil^*_{23} \, (1 \ot 1 \ot L^2(G))] \; .$$
Using that $[\Phi(M_0)(1 \ot C_0(G))] = S_l = [(1 \ot C_0(G)) \Phi(M_0)]$, that $\Vtil_{34}$ and $\Vtil_{24}$ commute, that $[\Vtil (L^2(G) \ot C_0(G))] = [L^2(G) \ot C_0(G)]$ and that $\Vtil$ normalizes $C_0(G) \otmin C_0(G) = C_0(G \times G)$, we get that $T$ belongs to
\begin{align*}
& [(1 \ot 1 \ot 1 \ot L^2(G)^*) \, \Vtil_{34} \, \Vtil_{24} \, (1 \ot C_0(G) \ot L^2(G) \ot C_0(G)) \, \Phi(M_0)_{13} \, \Vtil^*_{23} \, (1 \ot 1 \ot L^2(G))] \\
& = [(1 \ot 1 \ot 1 \ot L^2(G)^*) \, \Vtil_{24} \, (1 \ot C_0(G) \ot L^2(G) \ot C_0(G)) \, \Phi(M_0)_{13} \, \Vtil^*_{23} \, (1 \ot 1 \ot L^2(G))] \\
& = [(1 \ot C_0(G) \ot L^2(G)) \, (1 \ot 1 \ot L^2(G)^*) \, \Vtil_{23} \, \Phi(M_0)_{13} \, \Vtil^*_{23} \, (1 \ot 1 \ot L^2(G))] \\
& = [(1 \ot C_0(G) \ot L^2(G)) \, (1 \ot 1 \ot L^2(G)^*) \, (\Phi \ot \id)\Phi(M_0) \, (1 \ot 1 \ot L^2(G))] \\
& = [(1 \ot C_0(G))\Phi(M_0) \ot L^2(G)] = S_l \otmin L^2(G) \; .
\end{align*}

Using that $\eta^* = \eta^* F$ and using \eqref{eq.est-b}, we can continue the computation in \eqref{eq.intermediate} and find that
\begin{align*}
(\Phi \ot \id)\Phi(x)&\, (1 \ot Z_0) \\
&= (1 \ot 1 \ot 1 \ot \eta^*) \, \Vtil_{34} \, \Vtil_{24} \, (1 \ot Z_0 \ot F)  \, \Phi(y)_{13} \, \Vtil^*_{23} \, (1 \ot 1 \ot \mu) + T \\
&= (1 \ot 1 \ot 1 \ot \eta^*) \, (1 \ot Z_0 \ot F) \, \Vtil_{23}  \, \Phi(y)_{13} \, \Vtil^*_{23} \, (1 \ot 1 \ot \mu) + T' + T \\
&= (1 \ot Z_0) \, (1 \ot 1 \ot \eta^*) \, (\Phi \ot \id)\Phi(y) \, (1 \ot 1 \ot \mu) + T' + T \\
&= (1 \ot Z_0) \, \Phi(x) + T' + T
\end{align*}
where the error term $T'$ belongs to
\begin{align*}
& [(1 \ot 1 \ot 1 \ot L^2(G)^*) \, (1 \ot C_0(G) \ot L^2(G) \ot C_0(G)) \, \Vtil_{23} \, \Phi(M_0)_{13} \, \Vtil^*_{23} \, (1 \ot 1 \ot L^2(G))] \\
& = [(1 \ot C_0(G) \ot L^2(G)) \, (1 \ot 1 \ot L^2(G)^*) \, (\Phi \ot \id)\Phi(M_0) \, (1 \ot 1 \ot L^2(G))] \\
& = S_l \otmin L^2(G) \; .
\end{align*}
So \eqref{eq.crucial-limit} and step~1 are proven.

{\bf Case 1 -- Step 2.} Define the $*$-homomorphism $\zeta_l : M \recht B(\cH) : \zeta_l = \pi_l \circ \Phi$ and the $*$-antihomomorphism $\zeta_r : M \recht B(\cH) : \zeta_r = \pi_r \circ \Phi$. Define
\begin{equation}\label{eq.def-cstar-S}
S = [\zeta_l(M_0) \, \zeta_r(M_0) \, \pi(C_0(G)) \, W_v \mid v \in \cN_{pMp}(A) ] \; .
\end{equation}
Finally, define the isometry $Z \in B(\cH, \cH \ot L^2(G))$ given by $Z = (\pi \ot \id)(Z_0)$. We claim that $S \subset B(\cH)$ is a C$^*$-algebra and that
\begin{equation}\label{eq.conclusion}
Z \zeta_l(x) - (\zeta_l \ot \id)\Phi(x) Z \quad\text{and}\quad Z \zeta_r(x) - (\zeta_r(x) \ot 1) Z \quad\text{belong to}\;\; S \otmin L^2(G)
\end{equation}
for all $x \in M_0$.

Since $\zeta_l : M_0 \recht B(\cH)$ and $\pi : C_0(G) \recht B(\cH)$ are covariant w.r.t.\ the continuous coaction $\Phi : M_0 \recht M(M_0 \otmin C^*_r(G))$, they induce a nondegenerate representation of the full crossed product. Since $G$ is co-amenable, the canonical homomorphism of the full crossed product onto the reduced crossed product is an isomorphism. The reduced crossed product is given by the C$^*$-algebra $S_l$ defined in \eqref{eq.Cstar-Sl}. So, we find a nondegenerate $*$-homomorphism
$$\theta_l : S_l \recht B(\cH) : \theta_l(\Phi(x) (1 \ot F)) = \zeta_l(x) \pi(F) \; ,$$
for all $x \in M_0, F \in C_0(G)$.

Associated with the coaction $\Phi : M \recht M \ovt L(G)$, we have the canonical coaction $\Phi\op : M\op \recht M\op \ovt R(G)$ defined as follows. Denote by $\gamma : M \recht M\op : \gamma(x) = x\op$ the canonical $*$-anti-isomorphism. As before, define the $*$-anti-isomorphism $\chi : L(G) \recht R(G) = \eta(\lambda_g) = \rho_g^*$. Then, $\Phi\op \circ \gamma = (\gamma \ot \chi) \circ \Phi$. The corresponding crossed product C$^*$-algebra is
$$S_r = [\Phi\op(M_0\op)(1 \ot C_0(G))] \subset M\op \ovt B(L^2(G)) \; .$$
Since also $\zeta_r$ and $\pi$ are covariant, we similarly find a nondegenerate $*$-homomorphism
$$\theta_r : S_r \recht B(\cH) : \theta_r(\Phi\op(x\op) (1 \ot F)) = \zeta_r(x) \pi(F) \; ,$$
for all $x \in M_0, F \in C_0(G)$.

So $\theta_l(S_l) = [\zeta_l(M_0) \pi(C_0(G))]$ and $\theta_r(S_r) = [\zeta_r(M_0) \pi(C_0(G))]$ and these are C$^*$-algebras. Moreover, the unitaries $W_v$, $v \in \cN_{pMp}(A)$, commute with $\zeta_l(M)$, $\zeta_r(M)$ and $\pi(C_0(G))$. So, the space $S$ defined in \eqref{eq.def-cstar-S} is a C$^*$-algebra and
$$S = [\theta_l(S_l) \, \theta_r(S_r) \, W_v \mid v \in \cN_{pMp}(A) ] \; .$$
Also, $\theta_l(S_l) \subset S$ and $\theta_r(S_r) \subset S$.

Applying to \eqref{eq.crucial-limit} the canonical extension of $\theta_l \ot \id$ to the multiplier algebra, we find the first half of \eqref{eq.conclusion}. In the same way as we proved \eqref{eq.crucial-limit}, one proves that
\begin{equation}\label{eq.crucial-limit-bis}
(1 \ot Z_0) \Phi\op(x\op) - (\Phi\op(x\op) \ot 1)(1 \ot Z_0) \in S_r \otmin L^2(G)
\end{equation}
for all $x \in M_0$. Applying $\theta_r \ot \id$ to \eqref{eq.crucial-limit-bis}, also the second half of \eqref{eq.conclusion} follows and step~2 is proven.

{\bf Case 1 -- Notations.} Write $\cG = \cN_{pMp}(A)$ and consider the $*$-algebras $\C \cG$ and $D = M \otalg M\op \otalg \C \cG$. Define the $*$-homomorphisms
\begin{align*}
& \Theta : D \recht B(\cH) : \Theta(x \ot y\op \ot v) = \zeta_l(x) \, \zeta_r(y) \, W_v \;\; ,\\
& \Theta_1 : D \recht B(\cH \ot L^2(G)) : \Theta_1(x \ot y\op \ot v) = (\zeta_l\ot \id)\Phi(x) \, (\zeta_r(y) \, W_v \ot 1) \;\; .
\end{align*}
Choose a positive functional $\Om$ on $B(\cH)$ as a weak$^*$ limit point of the net of vector functionals $T \mapsto \langle T \xi_n , \xi_n \rangle$. The properties of the net $\xi_n$ established in step~4 above then imply that:
\begin{equation}\label{eq.translated-properties}
\begin{split}
& \Om(1) = \Om(\Theta(p \ot p\op \ot p)) = \Tr(p) \; , \;\; \Om(\Theta(x \ot 1 \ot p)) = \Tr(pxp) \;\text{for all $x \in M$},\\
& \Om(\Theta(v \ot (v^*)\op \ot v)) = \Tr(p) \;\text{for all $v \in \cG$.}
\end{split}
\end{equation}

{\bf Case 1 -- Step 3.} Writing $C = \|\Om\| \, \Lambda(G)^2$, we claim that
\begin{equation}\label{eq.even-better-est}
|\Om(\Theta(x))| \leq C \, \|\Theta_1(x)\| \quad\text{for all}\;\; x \in D \; .
\end{equation}

Since $W_v$ commutes with $\pi(C_0(G))$ for all $v \in \cG$, we have $Z W_v = (W_v \ot 1)Z$ for all $v \in \cG$. Denoting $D_0 = M_0 \otalg M_0\op \otalg \C \cG$, \eqref{eq.conclusion} implies that
\begin{equation}\label{eq.final-conclusion}
Z^* \Theta_1(x) Z - \Theta(x) \in S \quad\text{for all}\;\; x \in D_0 \; .
\end{equation}
Since we are in case~1, we have that $\Om(\pi(F)) = 0$ for all $F \in C_0(G)$. So, $\Om(T) = 0$ for all $T \in S$. It then follows from \eqref{eq.final-conclusion} that
\begin{equation}\label{eq.great-est}
|\Om(\Theta(x))| = |\Om(Z^* \Theta_1(x) Z)| \leq \|\Om\| \, \|\Theta_1(x)\| \quad\text{for all}\;\; x \in D_0 \; .
\end{equation}

To conclude step~3, we now have to approximate as follows an arbitrary $x \in D$ by elements in $D_0$.

Take a net $\eta_n \in A(G)$ such that the net $m_n = (\id \ot \eta_n)\circ \Delta$ satisfies \eqref{eq.cond-wa}. Define $\vphi_n : M \recht M$ by $\vphi_n = (\id \ot \eta_n) \circ \Phi$. Note that the image of $\Theta_1$ lies in $B(\cH) \ovt L(G)$ and that $(\id \ot m_n) \circ \Theta_1 = \Theta_1 \circ (\vphi_n \ot \id \ot \id)$. It follows that
$$\|\Theta_1((\vphi_n \ot \id \ot \id)(x))\| \leq \Lambda(G) \, \|\Theta_1(x)\| \quad\text{for all}\;\; x \in D \;\;\text{and all}\;\; n \; .$$
Denoting by $\chi_1 : L(G) \recht L(G)$ the period 2 anti-automorphism given by $\chi_1(\lambda_g) = \lambda_{g^{-1}}$, the representation $\Theta_1$ is unitarily conjugate to the representation
$$\Theta_2 : D \recht B(\cH \ot L^2(G)) : \Theta_2(x \ot y\op \ot v) = (\zeta_l(x) \ot 1) \, (\zeta_r\ot \chi_1)\Phi(y) \, (W_v \ot 1) \;\; .$$
So, writing $\vphi_m\op(y\op) = (\vphi_m(y))\op$, we also find that
$$\|\Theta_1((\id \ot \vphi_m\op \ot \id)(x))\| \leq \Lambda(G) \, \|\Theta_1(x)\| \quad\text{for all}\;\; x \in D \;\;\text{and all}\;\; m \; .$$
Altogether, we have proved that
$$\|\Theta_1((\vphi_n \ot \vphi_m\op \ot \id)(x))\| \leq \Lambda(G)^2 \, \|\Theta_1(x)\| \quad\text{for all}\;\; x \in D \;\;\text{and all}\;\; n,m \; .$$
For every $T \in B(\cH)$, write $\|T\|_\Om = \sqrt{\Om(T^* T)}$. Since
$$\|\zeta_l(\vphi_n(x)) - \zeta_l(x)\|_\Om^2 = \Tr(p(\vphi_n(x)-x)^*(\vphi_n(x)-x)p)$$
for every $x \in M$,
it follows from the Cauchy-Schwarz inequality that for every $x \in D$ and every $m$,
$$\Om(\Theta((\id \ot \vphi_m\op \ot \id)(x))) = \lim_n \Om(\Theta((\vphi_n \ot \vphi_m\op \ot \id)(x))) \; .$$
Similarly, we have
$$\Om(\Theta(x)) = \lim_m \Om(\Theta((\id \ot \vphi_m\op \ot \id)(x)))$$
for all $x \in D$. Since $(\vphi_n \ot \vphi_m\op \ot \id)(x) \in D_0$ for all $n,m$, it follows from \eqref{eq.great-est} that
$$|\Om((\vphi_n \ot \vphi_m\op \ot \id)(x))| \leq \|\Om\| \, \|\Theta_1((\vphi_n \ot \vphi_m\op \ot \id)(x))\| \leq C \, \|\Theta_1(x)\|$$
for all $n,m$. Taking first the limit over $n$ and then over $m$, we find that \eqref{eq.even-better-est} holds and step~3 is proven.

{\bf Case 1 -- End of the proof.} Because of \eqref{eq.even-better-est}, we can define a continuous functional $\Om_1$ on the C$^*$-algebra $[\Theta_1(D)]$ satisfying $\Om_1(\Theta_1(x)) = \Om(\Theta(x))$ for all $x \in D$. Since
$$\Om_1(\Theta_1(x)^* \Theta_1(x)) = \Om(\Theta(x^* x)) \geq 0$$
for all $x \in D$, it follows by density that $\Om_1$ is positive.

Extend $\Om_1$ to a functional on $B(\cH \ot L^2(G))$ without increasing its norm. So $\Om_1$ remains positive. Write
$$q_1 = \Theta_1(p \ot p\op \ot p) = (\zeta_l \ot \id)\Phi(p) \, (\zeta_r(p) \ot 1) \; .$$
For every $v \in \cG$, define
$$U_v := \Theta_1(v \ot (v^*)\op \ot v) = (\zeta_l \ot \id)(\Phi(v)) (\zeta_r(v^*) W_v \ot 1)$$
and note that $U_v$ is a unitary in $B(q_1(\cH \ot L^2(G)))$. By \eqref{eq.translated-properties}, these unitaries $U_v$ satisfy
\begin{align*}
\Om_1(U_v) & = \Om_1(\Theta_1(v \ot (v^*)\op \ot v)) = \Om(\Theta(v \ot (v^*)\op \ot v)) = \Tr(p) \\ &= \Om(\Theta(p \ot p\op \ot p)) = \Om_1(\Theta_1(p \ot p\op \ot p)) = \Om_1(q_1) \; .
\end{align*}
By \eqref{eq.translated-properties}, we also have that
\begin{align*}
& \Om_1(1-q_1) = \Om(1 - \Theta(p \ot p\op \ot p)) = 0 \quad\text{and}\\
& \Om_1((\zeta_l \ot \id)(\Phi(x))) = \Om(\Theta(x \ot 1 \ot p)) = \Tr(pxp) \quad\text{for all $x \in M$.}
\end{align*}
Altogether, we have in particular that $\Om_1$ is $U_v$-central for every $v \in \cG$.
%

Define the positive functional $\Om_2$ on $q(M \ovt B(L^2(G)))q$ given by $\Om_2(T) = \Om_1((\zeta_l \ot \id)(T))$. Then, $\Om_2(\Phi(x)) = \Tr(x)$ for all $x \in pMp$. Since for every $v \in \cG$, the functional $\Om_1$ is $U_v$-central, while $\zeta_r(v^*) W_v \ot 1$ commutes with $(\zeta_l \ot \id)(q(M \ovt B(L^2(G)))q)$, we get that $\Om_2$ is $\Phi(\cG)$-central. Writing $P = \cN_{pMp}(A)\dpr$, the Cauchy-Schwarz inequality implies that $\Om_2$ is $\Phi(P)$-central. So we have proved that $P$ is $\Phi$-amenable.

{\bf Proof in Case 2.} After passing to a subnet, we may assume that there is an $F \in C_0(G)$ such that the net $\|\pi(F)\xi_n\|$ is convergent to a strictly positive number. Choose a positive functional $\Om$ on $B(\cH)$ as a weak$^*$ limit point of the net of vector functionals $T \mapsto \langle T \xi_n , \xi_n \rangle$. Define the C$^*$-algebra $S_1 := \theta_l(S_l) = [\zeta_l(M_0) \pi(C_0(G))]$. Denote by $\Om_1$ the restriction of $\Om$ to $S_1\dpr$. By the properties of the net $\xi_n$ established in step~4 above, $\Om_1(\zeta_l(x)) = \Tr(pxp)$ for all $x \in M$ and $\Om_1$ is $\zeta_l(A)$-central. Also, the restriction of $\Om_1$ to $S_1$ is nonzero.

Define $\delta = \|\Om_1|_{S_1}\|$ and put $\eps = \delta (4\Lambda(G)^3 + 2\Lambda(G)^2 + 2)^{-1}$. Since the elements $\pi(F)$, with $F \in C_c(G)$ and $0 \leq F \leq 1$, form an approximate identity for $S_1$, we can fix $F \in C_c(G)$ with $0 \leq F \leq 1$ and
$$\Om_1(\pi(F)) \geq \delta-\eps \quad\text{and}\quad |\Om_1(T) - \Om_1(T \pi(F))| < \eps \, \|T\| \;\;\text{for all}\;\; T \in S_1 \; .$$
As above, take a net of completely bounded maps $\vphi_n : M \recht M$ such that $\|\vphi_n\|\cb \leq \Lambda(G)$ and $\vphi_n(M) \subset M_0$ for all $n$ and $\vphi_n(x) \recht x$ strongly for all $x \in M$. Because $\Om_1(\zeta_l(x)) = \Tr(pxp)$ for all $x \in M$,
\begin{equation}\label{eq.useful}
\Om_1(\zeta_l(x) T \zeta_l(y)) = \lim_n \Om_1(\zeta_l(\vphi_n(x)) T \zeta_l(\vphi_n(y)))
\end{equation}
for all $x,y \in M$ and $T \in S_1\dpr$.

Using the $\zeta_l(A)$-centrality of $\Om_1$, we then find, for all $a \in \cU(A)$,
\begin{align*}
\delta &\leq \Om_1(\pi(F)) + \eps = \Om_1(\zeta_l(a^*) \pi(F) \zeta_l(a)) + \eps \\
&= \lim_n \Re \Om_1(\zeta_l(\vphi_n(a^*)) \pi(F) \zeta_l(\vphi_n(a))) + \eps \; .
\end{align*}
Since $\zeta_l(\vphi_m(a^*)) \pi(F) \zeta_l(\vphi_n(a))$ belongs to $S_1$ and has norm at most $\Lambda(G)^2$, we get that
\begin{equation}\label{eq.stapje}
\delta \leq \limsup_n \Re \Om_1(\zeta_l(\vphi_n(a^*)) \pi(F) \zeta_l(\vphi_n(a)) \pi(F)) + \eps(\Lambda(G)^2 + 1) \; .
\end{equation}

We claim that there exists an $\om_0 \in A(G)$ such that the corresponding completely bounded map $\vphi_0 : M \recht M : \vphi_0 = (\id \ot \om_0)\circ \Phi$ satisfies $\|\vphi_0\|\cb \leq 2 \Lambda(G)$ and
\begin{equation}\label{eq.squeeze}
\pi(F) \zeta_l(x) \pi(F) = \pi(F) \zeta_l(\vphi_0(x)) \pi(F) \quad\text{for all}\;\; x \in M_0 \; .
\end{equation}
Using $\theta_l : S_l \recht S_1$, it suffices to construct $\om_0 \in A(G)$ such that $\|\vphi_0\|\cb \leq 2 \Lambda(G)$ and
\begin{equation}\label{eq.to-do}
(1 \ot F) \Phi(x) (1 \ot F) = (1 \ot F) \Phi(\vphi_0(x)) (1 \ot F) \quad\text{for all}\;\; x \in M_0 \; .
\end{equation}
Denote by $K \subset G$ the (compact) support of $F$. By \cite[Proposition 1.1]{CH88}, we can choose $\om_0 \in A(G)$ such that $\om_0(g) = 1$ for all $g \in K K^{-1}$ and such that the map $m_0 = (\id \ot \om_0)\circ \Delta$ satisfies $\|m_0\|\cb \leq 2 \Lambda(G)$. As operators on $L^2(G)$, we have that $F \lambda_g F = 0$ for all $g \in G \setminus K K^{-1}$. It follows that $F x F = F m_0(x) F$ for all $x \in L(G)$. Writing $\vphi_0 = (\id \ot \om_0) \circ \Phi$, we then also have
$$(1 \ot F) \Phi(x) (1 \ot F) = (1 \ot F) \, (\id \ot m_0)(\Phi(x)) \, (1 \ot F) = (1 \ot F) \Phi(\vphi_0(x)) (1 \ot F) \; .$$
So \eqref{eq.to-do} holds and \eqref{eq.squeeze} is proved.

Combining \eqref{eq.squeeze} and \eqref{eq.stapje} and using that $\zeta_l(\vphi_n(a^*)) \pi(F) \zeta_l(\vphi_0(\vphi_n(a)))$ is an element of $S_1$ with norm at most $2\Lambda(G)^3$, we get that
\begin{align*}
\delta & \leq \limsup_n \Re \Om_1(\zeta_l(\vphi_n(a^*)) \pi(F) \zeta_l(\vphi_0(\vphi_n(a))) \pi(F)) + \eps(\Lambda(G)^2 + 1) \\
&\leq \limsup_n \Re \Om_1(\zeta_l(\vphi_n(a^*)) \pi(F) \zeta_l(\vphi_0(\vphi_n(a)))) + \delta/2 \; .
\end{align*}
As in \eqref{eq.useful} and using the $\zeta_l(A)$-centrality of $\Om_1$, we conclude that
$$
\delta/2 \leq \Re \Om_1(\zeta_l(a^*) \pi(F) \zeta_l(\vphi_0(a))) = \Re \Om_1(\pi(F) \zeta_l(\vphi_0(a)a^*))$$
for all $a \in \cU(A)$. For every $T \in S_1\dpr$ and $x \in M$, we have
$$|\Om_1(T \zeta_l(x))|^2 \leq \Om_1(T T^*) \, \Tr(px^* xp) \; .$$
So we find a unique $\eta \in L^2(Mp)$ such that
$$\Om_1(\pi(F) \zeta_l(x)) = \langle xp , \eta \rangle \quad\text{for all}\;\; x \in M \; .$$
It then follows that
$$\delta/2 \leq \Re \langle \vphi_0(a)a^*, \eta \rangle \quad\text{for all}\;\; a \in \cU(A) \; .$$
Since $\om_0 \in A(G)$, we can take $\xi_1,\xi_2 \in L^2(G)$ such that $\om_0(g) = \langle \lambda_g \xi_1,\xi_2 \rangle$ for all $g \in G$. It follows that
$$\delta/2 \leq \Re \langle \Phi(a) (p \ot \xi_1) a^* , \eta \ot \xi_2 \rangle \quad\text{for all}\;\; a \in \cU(A) \; .$$
Defining $\xi_3 \in \Phi(p) (L^2(Mp) \ot L^2(G))$ as the element of minimal norm in the closed convex hull of $\{\Phi(a) (p \ot \xi_1) a^* \mid a \in \cU(A) \}$, we conclude that $\xi_3$ satisfies $\Phi(a) \xi_3 = \xi_3 a$ for all $a \in A$ and that $\Re \langle \xi_3 , \eta \ot \xi_2 \rangle \geq \delta/2$. So, $\xi_3 \neq 0$ and we have proven that $A$ can be $\Phi$-embedded.

\section{Uniqueness of Cartan subalgebras; proof of Theorem \ref{thm.unique-Cartan}}\label{sec.proof-unique-cartan}

Theorem \ref{thm.unique-Cartan} is a special case of the following general result. To formulate this result, recall that a nonsingular action $G \actson (X,\mu)$ of a lcsc group $G$ on a standard probability space is called \emph{amenable in the sense of Zimmer} if there exists a $G$-equivariant conditional expectation $E : L^\infty(X \times G) \recht L^\infty(X)$ w.r.t.\ the action $G \actson X \times G$ given by $g \cdot (x,h) = (g \cdot x, gh)$.

\begin{theorem}\label{thm.unique-Cartan-general}
Let $G = G_1 \times \cdots \times G_n$ be a direct product of lcsc weakly amenable groups with property~(S). Let $G \actson (X,\mu)$ be an essentially free nonsingular action. Denote by $G_i^\circ$ the direct product of all $G_j$, $j \neq i$.

If for every $i \in \{1,\ldots,n\}$ and every non-null $G$-invariant Borel set $X_0 \subset X$, the action $G_i \actson L^\infty(X_0)^{G_i^\circ}$ is nonamenable in the sense of Zimmer, then $L^\infty(X) \rtimes G$ has a unique Cartan subalgebra up to unitary conjugacy.

In particular, $L^\infty(X) \rtimes G$ has a unique Cartan subalgebra up to unitary conjugacy when the groups $G_i$ are nonamenable and the action $G \actson (X,\mu)$ is either probability measure preserving or irreducible.
\end{theorem}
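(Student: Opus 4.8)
The plan is to deduce Theorem~\ref{thm.unique-Cartan-general} from the main technical dichotomy, Theorem~\ref{thm.main-tech}, applied to a suitable coaction. First I would set up the relevant object. Since $G$ is non-discrete, $L^\infty(X)$ is not itself a Cartan subalgebra of $M = L^\infty(X) \rtimes G$. Instead, one chooses a cross section for the action $G \actson (X,\mu)$, producing a countable nonsingular equivalence relation $\cR$ together with a cocycle $\om : \cR \recht G$, such that $M$ is (a corner of, or Morita equivalent to) $L(\cR)$ and the canonical Cartan subalgebra $L^\infty$ of the cross-section space sits inside $L(\cR)$. The cocycle $\om$ then induces a canonical coaction $\Phi : L(\cR) \recht L(\cR) \ovt L(G)$, as announced in the introduction. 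The first step is thus to carefully recall this cross-section machinery and verify that uniqueness of the Cartan subalgebra of $M$ is equivalent to uniqueness of the canonical Cartan subalgebra $D = L^\infty$ inside $L(\cR)$ up to unitary conjugacy.

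The core step is then to run Popa's conjugacy-of-Cartan strategy in this coaction framework. Let $B \subset p L(\cR) p$ be an arbitrary Cartan subalgebra (with $\Tr(p) < \infty$), so $B$ is amenable and regular. I would show that $B$ is automatically $\Phi$-amenable: amenability of $B$ together with the fact that the canonical Cartan subalgebra arises from the amenable relation $\cR$ should give the required $\Phi(B)$-central state, using that an amenable subalgebra relatively amenable to a diffuse abelian algebra is $\Phi$-amenable in the sense defined before Theorem~\ref{thm.main-tech}. Then Theorem~\ref{thm.main-tech} gives a dichotomy: either $B$ can be $\Phi$-embedded, or $\cN_{pL(\cR)p}(B)\dpr = pL(\cR)p$ stays $\Phi$-amenable. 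The latter alternative would force $pL(\cR)p$ itself to be $\Phi$-amenable, and I would rule this out precisely using the nonamenability hypothesis: $\Phi$-amenability of the whole algebra translates, via the cocycle $\om$ and the identification of $\Phi$-amenability with Zimmer amenability of the induced $G$-action, into amenability in the sense of Zimmer of $G_i \actson L^\infty(X_0)^{G_i^\circ}$ for some $i$ and some non-null invariant $X_0$ — contradicting the assumption. This contradiction argument is where the product structure $G = G_1 \times \cdots \times G_n$ enters: one applies the dichotomy successively to each factor $G_i$, using that each $G_i$ is weakly amenable with property~(S).

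Having excluded the second alternative, the first alternative says $B$ can be $\Phi$-embedded. The final step is to convert this $\Phi$-embedding into an actual unitary conjugacy $u B u^* = D$. Here I would invoke the harmonic-analytic meaning of $\Phi$-embedding: a nonzero $B$-central vector in $\Phi(p)(L^2(L(\cR)p) \ot L^2(G))$ should be reinterpreted, through the cross-section correspondence, as an intertwiner showing $B \prec D$ in Popa's sense. Since both $B$ and $D$ are Cartan (hence maximal abelian and regular), a standard argument — Popa's conjugacy theorem for intertwining Cartan subalgebras — upgrades $B \prec D$ to unitary conjugacy $u B u^* = D$ inside $pL(\cR)p$, and then back to $M$. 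The last sentence of the theorem, giving the two concrete cases, follows by checking the Zimmer-nonamenability hypothesis: for a pmp action the Zimmer amenability of $G_i \actson L^\infty(X_0)^{G_i^\circ}$ would force $G_i$ to be amenable (as the action preserves a probability measure), contradicting nonamenability of $G_i$; for an irreducible action the ergodicity of each factor makes $L^\infty(X_0)^{G_i^\circ}$ trivial on each invariant piece, again reducing to nonamenability of $G_i$.

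\medskip

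The main obstacle I expect is the precise dictionary between the abstract notions ($\Phi$-amenability and $\Phi$-embedding of subalgebras of $L(\cR)$) and their concrete meaning in terms of the cross-section equivalence relation and the cocycle $\om$ into $G$. In particular, translating ``$pL(\cR)p$ is $\Phi$-amenable'' into ``$G_i \actson L^\infty(X_0)^{G_i^\circ}$ is Zimmer-amenable'' requires carefully relating the $\Phi(A)$-central functional on $\Phi(p)(M \ovt B(L^2(G)))\Phi(p)$ to a $G$-equivariant conditional expectation $L^\infty(X \times G) \recht L^\infty(X)$, and handling the passage to invariant subsets $X_0$ and to the relative commutant $L^\infty(X_0)^{G_i^\circ}$ for the successive factors. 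Getting this correspondence right, uniformly over all non-null invariant $X_0$, is the technical heart of the deduction; once it is in place, the dichotomy and the conjugacy theorem assemble the result routinely.
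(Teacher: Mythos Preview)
Your overall strategy is correct, but there is a genuine gap: you never reduce to a tracial setting, and Theorem~\ref{thm.main-tech} requires one. The action $G \actson (X,\mu)$ is only nonsingular, so the cross section equivalence relation $\cR$ is only nonsingular and $L(\cR)$ has no canonical trace; your sentence ``let $B \subset pL(\cR)p$ with $\Tr(p)<\infty$'' is therefore not meaningful as written. The paper handles this by first invoking Lemma~\ref{lem.reduce-to-core} to reduce to the continuous core $\core(N)$, then realizing $\core(N)$ as $L^\infty(\Xtil)\rtimes G$ for the Maharam extension $G \actson (\Xtil,\mutil)$. This action scales $\mutil$ by $\delta^{-1}$, so any partial cross section with finite measure gives a \emph{pmp} equivalence relation, and now Theorem~\ref{thm.main-tech} applies. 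Without this reduction your argument only covers the pmp case with $G$ unimodular.

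Two smaller points. First, you justify $\Phi$-amenability of $B$ by saying $\cR$ is amenable; it is not assumed to be. The correct reason is simply that $B$ itself is amenable (being abelian): an $B$-central state on $B(L^2(M))$ restricting to $\Tr$ on $M$, tensored with any state on $B(L^2(G))$ and conjugated by the implementing unitary of $\Phi_\omega$, gives the required $\Phi(B)$-central functional. Second, the paper does not pass directly from ``$B$ can be $\Phi$-embedded'' to ``$B \prec L^\infty(X_1)$''. It introduces an intermediate notion, \emph{$\omega$-compactness} of $B$ (Theorem~\ref{thm.reinterpret}): $\Phi$-embeddability yields a corner of $B$ that is $\omega$-compact, and one proves $\omega_i$-compactness for each factor cocycle $\omega_i = \pi_i \circ \omega$ separately. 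This is what makes the product structure work cleanly, since $\omega$-compactness is equivalent to $\omega_i$-compactness for every $i$. Only after $B$ is shown to be $\omega$-compact does one use that $\omega^{-1}(K) \subset \cR$ is a \emph{bounded} set (a finite union of graphs of partial automorphisms, because $\omega$ is the canonical cocycle) to deduce $B \prec L^\infty(X_1)$. Finally, the translation from $\Phi_\omega$-amenability of a central corner of $L(\cR)$ to Zimmer amenability of $G_i \actson L^\infty(X_0)^{G_i^\circ}$ requires an extra averaging argument over the $\R$-action on the Maharam extension (to pass from a statement on $\Xtil_0 \subset \Xtil$ to one on $X_0 \subset X$); this is the content of Corollary~\ref{cor.reinterpret} together with the end of the proof of Theorem~\ref{thm.unique-Cartan-general}, and is more delicate than your outline suggests.
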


{\bf Cross section equivalence relations.}\label{discussion.cross-section} Theorem \ref{thm.unique-Cartan-general} is proven by using \emph{cross section equivalence relations}. These were introduced in \cite{Fo74,Co79} and a rather self-contained approach can be found in \cite[Section 4.1 and Appendix B]{KPV13}.

Let $G \actson (X,\mu)$ be a nonsingular action of a lcsc group $G$ on the standard probability space $(X,\mu)$. This means that $X$ is a standard Borel space and that $G \actson X$ is a Borel action that leaves the measure $\mu$ quasi-invariant in the sense that $\mu(g \cdot \cU) = 0$ if and only if $\mu(\cU) = 0$, whenever $\cU \subset X$ is Borel and $g \in G$. Assume that this action is \emph{essentially free}, meaning that almost every point $x \in X$ has a trivial stabilizer. Since the set of points $x \in X$ having a trivial stabilizer is a Borel subset of $X$, we may equally well assume that the action is really free.

A \emph{cross section} for $G \actson (X,\mu)$ is a Borel subset $X_1 \subset X$ with the following two properties.
\begin{itemlist}
\item There exists a neighborhood $\cU$ of $e$ in $G$ such that the map $\cU \times X_1 \recht X : (g,x) \mapsto g \cdot x$ is injective.
\item The subset $G \cdot X_1 \subset X$ is conull.
\end{itemlist}
Note that the first condition implies that the map $G \times X_1 \recht X : (g,x) \mapsto g \cdot x$ is countable-to-one and thus, maps Borel sets to Borel sets. So, the set $G \cdot X_1$ appearing in the second condition is Borel.

A \emph{partial cross section} for $G \actson (X,\mu)$ is a Borel subset $X_1 \subset X$ satisfying the first condition and satisfying the property that $G \cdot X_1$ is non-null.

Given any partial cross section $X_1$, the equivalence relation $\cR$ on $X_1$ defined by
$$\cR = \{(y,y') \in X_1 \times X_1 \mid y \in G \cdot y'\}$$
is Borel and has countable equivalence classes. Also, $X_1$ has a canonical measure class, given by a probability measure $\mu_1$, and this measure $\mu_1$ is quasi-invariant under the equivalence relation $\cR$. The \emph{cross section equivalence relation} $\cR$ on $(X_1,\mu_1)$ is thus a countable nonsingular Borel equivalence relation.

By construction, the von Neumann algebra $L(\cR)$ is canonically isomorphic with $q (L^\infty(X) \rtimes G) q$, for some projection $q$, see e.g.\ \cite[Lemma 4.5]{KPV13}. In this way, cross sections define the canonical Cartan subalgebra $L^\infty(X) \rtimes G$.

As will become clear in the proof of Theorem \ref{thm.unique-Cartan-general}, it is useful to allow $\mu$ to be a $\sigma$-finite measure and to consider the special case where $\mu$ is scaled by the inverse of the modular function of $G$, meaning that $\mu(g \cdot \cU) = \delta(g)^{-1} \mu(\cU)$ for all Borel sets $\cU \subset X$ and all $g \in G$. This covers in particular the case where $\mu$ is a $G$-invariant probability measure and $G$ is unimodular. Fix a \emph{right invariant} Haar measure $\lambda$ on $G$ and recall that $\lambda(g \cU) = \delta(g)^{-1} \lambda(\cU)$ for all Borel sets $\cU \subset G$ and $g \in G$.

Let $\mu$ be a $\si$-finite measure on $X$ that is scaled by the inverse of the modular function.
Let $X_1 \subset X$ be any partial cross section (and note that the definitions above only depend on the measure class of $\mu$ so that taking $\mu$ to be $\sigma$-finite makes no difference). Then there is a \emph{unique} $\sigma$-finite measure $\mu_1$ on $X_1$ such that the following holds: whenever $\cU$ is a neighborhood of $e$ in $G$ such that the map $\Psi : \cU \times X_1 \recht X : (g,x) \mapsto g \cdot x$ is injective, we have $\Psi_*(\lambda|_{\cU} \times \mu_1) = \mu|_{\cU \cdot X_1}$. This measure $\mu_1$ is \emph{invariant} under the cross section equivalence relation.

In the case where $G$ is unimodular and $\mu$ is a $G$-invariant probability measure, we get that $\mu_1$ is a finite $\cR$-invariant measure. It is then more customary to normalize $\mu_1$, so that $\mu_1$ becomes an $\cR$-invariant probability measure on $X_1$ and
$$\Psi_*(\lambda|_{\cU} \times \mu_1) = \covol(X_1) \, \mu|_{\cU \cdot X_1} \; .$$
The scaling factor $\covol(X_1)$ is called the \emph{covolume} of $X_1$. Note that this covolume is proportional to the choice of the Haar measure on $G$.

{\bf\boldmath The coaction $\Phi_\om$ associated with a cocycle $\om$ and $\om$-compactness.} Let $\cR$ be a countable pmp equivalence relation on the standard probability space $(X_1,\mu_1)$. Denote by $[\cR]$ its \emph{full group}, i.e.\ the group of all pmp isomorphisms $\vphi : X_1 \recht X_1$ with the property that $(\vphi(x),x) \in \cR$ for all $x \in X_1$. Denote by $[[\cR]]$ the \emph{full pseudogroup} of $\cR$, consisting of all partial measure preserving transformations with the property that $(\vphi(x),x) \in \cR$ for all $x \in \dom(\vphi)$. The tracial von Neumann algebra $M = L(\cR)$ is generated by the Cartan subalgebra $L^\infty(X_1)$ and the unitary elements $u_\vphi$, $\vphi \in [\cR]$, normalizing $L^\infty(X_1)$. Similarly, every $\vphi \in [[\cR]]$ defines a partial isometry $u_\vphi \in M$. Finally, $\cR$ is equipped with a natural $\sigma$-finite measure and $L^2(M)$ is naturally identified with $L^2(\cR)$.

Let $G$ be a lcsc group and $\om : \cR \recht G$ a cocycle, i.e.\ a Borel map satisfying
$$\om(x,y) \, \om(y,z) = \om(x,z) \quad\text{for a.e.}\;\; (x,y,z) \in \cR^{(2)} \; ,$$
where $\cR^{(2)} = \{(x,y,z) \in X_1 \times X_1 \times X_1 \mid (x,y) \in \cR \;\;\text{and}\;\; (y,z) \in \cR \}$ is equipped with its natural $\sigma$-finite measure. Note that $\om(x,x) = e$ for a.e.\ $x \in X$.

We say that a von Neumann subalgebra $B \subset pMp$ is \emph{$\om$-compact} if for every $\eps > 0$, there exists a compact subset $K \subset G$ such that
$$\|b - P^\om_K(b) \|_2 \leq \eps \, \|b\| \quad\text{for all}\;\; b \in B \; ,$$
where $P^\om_K$ is the orthogonal projection of $L^2(\cR)$ onto $L^2(\om^{-1}(K))$.

Given a von Neumann subalgebra $B \subset M$, the same argument as in the proof of \cite[Proposition 2.6]{Va10b} implies that the set of projections
$$\{p \in B' \cap M \mid \; \text{$p$ is a projection and $B p$ is $\om$-compact}\;\}$$
attains its maximum in a unique projection $p$ and that this projection $p$ belongs to $\cN_M(B)' \cap M$.

We associate to $\om$ the coaction
$$\Phi_\om : M \recht M \ovt L(G) : \Phi_\om(F) = F \ot 1 \;\; , \;\; \Phi_\om(u_\vphi) = (u_\vphi \ot 1) V_\vphi \;\; ,$$
for all $F \in L^\infty(X_1)$ and all $\vphi \in [\cR]$, where $V_\vphi \in L^\infty(X_1) \ovt L(G)$ is given by $V_\vphi(x) = \lambda_{\om(\vphi(x),x)}$.

We deduce from Theorem \ref{thm.main-tech} the following result.

\begin{theorem}\label{thm.reinterpret}
Let $\cR$ be a countable pmp equivalence relation on the standard probability space $(X_1,\mu_1)$. Let $G$ be a weakly amenable locally compact group with property~(S) and $\om : \cR \recht G$ a cocycle. Write $M = L(\cR)$ and assume that $A \subset M$ is a $\Phi_\om$-amenable von Neumann subalgebra with normalizer $P = \cN_M(A)\dpr$. Denote by $p \in P' \cap M$ the unique maximal projection such that $A p$ is $\om$-compact. Then, $P(1-p)$ is $\Phi_\om$-amenable.
\end{theorem}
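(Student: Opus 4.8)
The plan is to deduce the statement from the dichotomy of Theorem \ref{thm.main-tech}, applied not to $A$ itself but to the corner cut down by $z := 1-p$, and then to exclude the embedding alternative using the maximality of $p$. Throughout, write $M = L(\cR)$ with its canonical finite trace $\Tr$, so that every projection of $M$ has finite trace. Since $p \in P' \cap M$, we also have $z \in P' \cap M \subset A' \cap M$, so $z$ commutes with $A$ and with every element of $\cN_M(A)$; in particular $Az = zAz \subset zMz$ and $Pz \subset zMz$ are well defined.

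First I would check that $\Phi_\om$-amenability passes to this corner. If $\Om$ is a positive functional on $M \ovt B(L^2(G))$ witnessing the $\Phi_\om$-amenability of $A$, then $\Om_z(T) := \Om(\Phi_\om(z)\, T\, \Phi_\om(z))$ is a positive functional on $\Phi_\om(z)(M \ovt B(L^2(G)))\Phi_\om(z)$ that is $\Phi_\om(Az)$-central and satisfies $\Om_z(\Phi_\om(x)) = \Tr(x)$ for all $x \in zMz$; the verification uses only that $\Phi_\om(z)$ is a projection commuting with $\Phi_\om(A)$ and that $\Om$ is $\Phi_\om(A)$-central. Hence $Az \subset zMz$ is $\Phi_\om$-amenable, and Theorem \ref{thm.main-tech}, with its finite-trace projection taken to be $z$, applies: either $Az$ can be $\Phi_\om$-embedded, or $\cN_{zMz}(Az)\dpr$ stays $\Phi_\om$-amenable.

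The heart of the argument is to exclude the first alternative, and I expect this to be the main obstacle. What is needed is the dictionary between $\Phi_\om$-embeddability and $\om$-compactness: if $Az$ can be $\Phi_\om$-embedded, then there is a nonzero projection $p_0 \in (Az)' \cap zMz$ with $(Az)p_0$ being $\om$-compact. To prove this one unpacks embeddability into a nonzero $Az$-central vector $\xi \in \Phi_\om(z)(L^2(Mz) \ot L^2(G))$, i.e.\ $\Phi_\om(a)\xi = \xi(a \ot 1)$ for all $a \in Az$. Using the explicit form $\Phi_\om(u_\vphi) = (u_\vphi \ot 1)V_\vphi$ with $V_\vphi(x) = \lambda_{\om(\vphi(x),x)}$ together with the structure of $L^2(\cR) \ot L^2(G)$, the centrality of $\xi$ forces its $L^2(G)$-leg to concentrate, through the cocycle $\om$, on compact subsets of $G$. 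Associating with $\xi$ a positive element $T \in (Az)' \cap zMz$ by pairing out the second leg and passing to a nonzero spectral projection $p_0$ of $T$, a standard support-projection argument then yields that $(Az)p_0$ is $\om$-compact, with a compact $K \subset G$ realizing the estimate. Granting this, the exclusion is immediate: a short computation using $z \in A' \cap M$ and $p_0 \le z$ shows $p_0 \in A' \cap M$, and $Ap_0 = (Az)p_0$ is $\om$-compact with $0 \ne p_0 \le z = 1-p$; but $p$ is the maximal projection of $A' \cap M$ with $Ap$ $\om$-compact, so $p_0 \le p$, whence $p_0 \le p \wedge (1-p) = 0$, a contradiction.

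It remains to pass from the surviving alternative to the conclusion. Theorem \ref{thm.main-tech} therefore gives that $\cN_{zMz}(Az)\dpr$ is $\Phi_\om$-amenable. Finally I would observe that $Pz \subset \cN_{zMz}(Az)\dpr$: for any $u \in \cN_M(A)$ the element $uz = zu$ is a unitary of $zMz$ normalizing $Az$, all three facts following from $z \in P' \cap M$, so $Pz = (\cN_M(A)z)\dpr \subset \cN_{zMz}(Az)\dpr$. Since $\Phi_\om$-amenability manifestly passes to von Neumann subalgebras of the distinguished algebra — a functional that is central for the larger algebra is a fortiori central for the smaller — we conclude that $Pz = P(1-p)$ is $\Phi_\om$-amenable, as desired. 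The two verifications of corner reduction and of the inclusion $Pz \subset \cN_{zMz}(Az)\dpr$ are routine; the genuine work is the embeddability-to-$\om$-compactness dictionary, which is where the harmonic analysis of the coaction $\Phi_\om$ enters.
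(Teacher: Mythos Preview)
Your overall strategy is correct and matches the paper: reduce to the corner $z = 1-p$, apply Theorem \ref{thm.main-tech} there, and rule out the embedding alternative via the maximality of $p$; the closing steps (passing $\Phi_\om$-amenability to corners and subalgebras, and the inclusion $Pz \subset \cN_{zMz}(Az)\dpr$) are indeed routine.

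The gap is exactly where you flag it, but your sketch of the embeddability-to-$\om$-compactness step does not reflect the mechanism that actually works. ``Pairing out the second leg'' to produce an element of $(Az)' \cap zMz$ and taking a spectral projection does not by itself yield $\om$-compactness: nothing in that outline explains why $aq$ should be close to its $P^\om_K$-truncation \emph{uniformly} in $a$, for a single compact $K$. The paper's argument runs as follows. From an $Az$-central vector $\xi$, let $q$ be its right support projection, viewed in the right W$^*$-module $M \ovt L^2(G)$, and take the polar part $V \in M \ovt L^2(G)$ with $V^*V = q$ and $Va = \Phi_\om(a)V$ for $a \in Az$. Now approximate $V$ in $\|\cdot\|_2$, with $\|\cdot\|_\infty$-control, by $W \in M_0 \otalg C_c(G)$, where $M_0$ is the dense $*$-subalgebra of $M$ spanned by $Fu_\vphi$ with $\om$ bounded along the graph of $\vphi$; this is possible by Kaplansky density. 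One gets $\|W^*\Phi_\om(a)W - aq\|_2 \le \eps\|a\|$, while for a fixed compact $K$ depending only on $W$ the element $W^*\Phi_\om(x)W$ lies in the range of $P^\om_K$ for \emph{every} $x \in M$, because slicing $\Phi_\om(x)$ by $C_c(G)$ vectors with supports $K_1,K_2$ lands in $L^2(\om^{-1}(K_2 K_1^{-1}))$ and the $M_0$-factors preserve $\om$-compact support. This yields $\|aq - P^\om_K(aq)\|_2 \le \eps\|a\|$, hence $Aq$ is $\om$-compact. Both ingredients --- the polar decomposition making the intertwiner bounded, and the approximation in the specific dense subalgebra $M_0 \otalg C_c(G)$ adapted to $\om$ --- are essential, and neither appears in your sketch.
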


\begin{proof}
By Theorem \ref{thm.main-tech}, we only have to prove the following statement: if $p \in A' \cap M$ is a nonzero projection such that $Ap$ can be $\Phi_\om$-embedded, then there exists a nonzero projection $q \in A' \cap M$ such that $q \leq p$ and $A q$ is $\om$-compact. Since $Ap$ can be $\Phi_\om$-embedded, there exists a nonzero vector $\xi \in L^2(M) \ot L^2(G)$ such that $\Phi_\om(p) \xi = \xi = \xi (p \ot 1)$ and such that $\Phi_\om(a) \xi = \xi(a \ot 1)$ for all $a \in Ap$.

Denote by $q$ the smallest projection in $M$ that satisfies $\xi = \xi(q \ot 1)$. Then, $q \in A' \cap M$, $q \leq p$ and $q \neq 0$. Viewing $\xi$ as affiliated with the W$^*$-module $M \ovt L^2(G)$, we can take the polar decomposition of $\xi$ and find $V \in M \ovt L^2(G)$ satisfying $V^* V = q$ and $V a = \Phi_\om(a) V$ for all $a \in A p$.

Define $\cG \subset [[\cR]]$ consisting of all $\vphi \in [[\cR]]$ for which the set $\{\om(\vphi(x),x) \mid x \in \dom(\vphi) \}$ has compact closure in $G$. For every $\vphi \in [\cR]$ and every $\eps > 0$, we can choose a Borel set $\cU \subset X_1$ with $\mu_1(X_1 \setminus \cU)<\eps$ such that the restriction of $\vphi$ to $\cU$ belongs to $\cG$. Therefore, the linear span of all $F u_\vphi$, $F \in L^\infty(X_1)$, $\vphi \in \cG$, defines a dense $*$-subalgebra $M_0$ of $M$. By construction, for every $x \in M_0$, there exists a compact subset $K \subset G$ such that $x = P^\om_K(x)$.

Choose $\eps > 0$. Consider on the W$^*$-module $M \ovt L^2(G)$ the norm $\|\cdot\|_2$ given by the embedding $M \ovt L^2(G) \subset L^2(M) \ot L^2(G)$, as well as the operator norm $\|\cdot\|_\infty$. By the Kaplansky density theorem, we can take $W \in M_0 \otalg C_c(G) \subset M \ovt L^2(G)$ such that $\|W\|_\infty \leq 1$ and $\|V-W\|_2 < \eps/3$ and $\|W^* W - q\|_2 < \eps/3$. For every $a \in M$, we find that
$$\|V a - W a\|_2 \leq \|V - W\|_2 \, \|a\| \leq \frac{\eps}{3} \, \|a\| \quad\text{and}\quad \|\Phi_\om(a) V - \Phi_\om(a) W\|_2 \leq \|a\| \, \|V - W\|_2  \leq \frac{\eps}{3} \, \|a\| \; .$$
Therefore, $\|\Phi_\om(a) W - W a\|_2 \leq \frac{2\eps}{3} \, \|a\|$ for all $a \in A p$. Since $\|W\|_\infty \leq 1$ and $\|W^* W - q\|_2 < \eps/3$, we find that
\begin{equation}\label{eq.poma}
\|W^* \Phi_\om(a) W - a q\|_2 \leq \eps \, \|a\|
\end{equation}
for all $a \in Ap$.

When $\xi_i \in C_c(G)$ have (compact) supports $K_i \subset G$, then $(1 \ot \xi_2^*) \Phi_\om(x) (1 \ot \xi_1)$ belongs to $L^2(\om^{-1}(K_2 K_1^{-1}))$ for all $x \in M$. So because $W \in M_0 \otalg C_c(G)$, we can take a compact subset $K \subset G$ such that $W^* \Phi_\om(x) W$ belongs to the range of $P^\om_K$ for every $x \in M$. It then follows from \eqref{eq.poma} that $\|P^\om_K(aq) - aq\|_2 \leq \eps \, \|a\|$ for all $a \in Ap$. For every element $a \in Aq$, we can choose $a_1 \in Ap$ with $\|a_1\| = \|a\|$ and $a = a_1 q$. So  we have proved that $\|P^\om_K(a) - a\|_2 \leq \eps \, \|a\|$ for all $a \in Aq$. Since $\eps > 0$ was arbitrary, this means that $Aq$ is $\om$-compact.
\end{proof}

In the formulation of Corollary \ref{cor.reinterpret} below, we make use of the following notion of an amenable pair of group actions, as introduced in \cite{AD81}. Let $G$ be a lcsc group and let $G \actson (Y,\eta)$ and $G \actson (X,\mu)$ be nonsingular actions. Assume that $p : Y \recht X$ is a $G$-equivariant Borel map such that the measures $p_*(\eta)$ and $\mu$ are equivalent. Following \cite[D\'{e}finition 2.2]{AD81}, the pair $(Y,X)$ is called amenable if there exists a $G$-equivariant conditional expectation $L^\infty(Y,\eta) \recht L^\infty(X,\mu)$. In particular, the action $G \actson (X,\mu)$ is amenable in the sense of Zimmer if and only if the pair $(X \times G , X)$ with $g \cdot (x,h) = (g \cdot x,gh)$ is amenable.

\begin{corollary}\label{cor.reinterpret}
Let $G$ be a lcsc group and $G \actson (X,\mu)$ an essentially free nonsingular action on the standard $\sigma$-finite measure space $(X,\mu)$. Assume that the action scales the measure $\mu$ by the inverse of the modular function of $G$. Let $(X_1,\mu_1)$ be a partial cross section with $\mu_1(X_1) < \infty$ and denote by $\cR$ the cross section equivalence relation on $(X_1,\mu_1)$, which is a countable equivalence relation with invariant probability measure $\mu_1(X_1)^{-1} \, \mu_1$.

Let $H$ be a weakly amenable locally compact group with property~(S) and $\pi : G \recht H$ a continuous group homomorphism. Denote by $\om : \cR \recht H$ the cocycle given by the composition of $\pi$ and the canonical cocycle $\om_0 : \cR \recht G$ determined by $\om_0(x',x) \cdot x = x'$ for all $(x',x) \in \cR$.

Let $A \subset L(\cR)$ be a Cartan subalgebra. If $A$ is not $\om$-compact, then there exists a non-null $G$-invariant Borel set $X_0 \subset X$ and a $G$-equivariant conditional expectation $L^\infty(X_0 \times H) \recht L^\infty(X_0)$ w.r.t.\ the action $g \cdot (x,h) = (g \cdot x,\pi(g) h)$.
\end{corollary}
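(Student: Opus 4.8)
The statement will be deduced from Theorem \ref{thm.reinterpret} by translating the resulting $\Phi_\om$-amenability back into the existence of a $G$-equivariant conditional expectation. First I would record the inputs to that theorem. Since $A$ is a Cartan subalgebra it is abelian, hence amenable, so $A$ is $\Phi_\om$-amenable: for an amenable von Neumann algebra the left action on every bimodule is amenable, and the defining functional can be produced together with the required normalization $\Om\circ\Phi_\om=\Tr$. Moreover, $A$ being Cartan gives $P:=\cN_M(A)\dpr=M$, so the maximal projection $p_0$ with $Ap_0$ $\om$-compact lies in $P'\cap M=\cZ(M)$ and is therefore central. If $A$ is not $\om$-compact then $p_0\neq 1$, and Theorem \ref{thm.reinterpret} makes the corner $M(1-p_0)=(1-p_0)M(1-p_0)$ $\Phi_\om$-amenable. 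Writing $z=1-p_0$, the central projection $z$ corresponds to an $\cR$-invariant Borel set $Y_1\subset X_1$, and $Mz=L(\cR|_{Y_1})$.

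Next I would unwind the $\Phi_\om$-amenability of $L(\cR|_{Y_1})$ into a groupoid statement on the cross section. Let $\Om$ be the witnessing positive functional on $(z\ot 1)(M\ovt B(L^2(H)))(z\ot 1)$. I would restrict $\Om$ to the abelian subalgebra $L^\infty(Y_1)\ovt L^\infty(H)=L^\infty(Y_1\times H)$, where $L^\infty(H)\subset B(L^2(H))$ acts by multiplication operators. Since $\Phi_\om(F)=F\ot 1$ for $F\in L^\infty(X_1)$ and $\Phi_\om(u_\vphi)=(u_\vphi\ot 1)V_\vphi$ with $V_\vphi(x)=\lambda_{\om(\vphi(x),x)}$, a direct computation shows that conjugation by $\Phi_\om(u_\vphi)$ implements on $L^\infty(X_1\times H)$ the transformation $(x,h)\mapsto(\vphi(x),\om(\vphi(x),x)h)$ (up to replacing $\vphi$ by $\vphi^{-1}$), and these generate the skew product equivalence relation $\cR^\om$ on $X_1\times H$ in which $\cR$ acts on the fibre $H$ through $\om$. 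Hence the $\Phi_\om(Mz)$-centrality of $\Om$ becomes $\cR^\om$-invariance of $\Om|_{L^\infty(Y_1\times H)}$, while $\Om\circ\Phi_\om=\Tr$ forces its marginal on $L^\infty(Y_1)$ to be $\mu_1$. As $L^\infty(Y_1)$ then lies in the multiplicative domain of $\Om$, disintegrating over $Y_1$ yields an $\cR$-equivariant conditional expectation $E_1:L^\infty(Y_1\times H)\recht L^\infty(Y_1)$; that is, the pair $(Y_1\times H,Y_1)$ is amenable for the fibrewise action of $\cR$ through $\om$.

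It remains to induce this amenability from the cross section up to the group. Here I would use that, for the diagonal action $g\cdot(x,h)=(g\cdot x,\pi(g)h)$ of $G$ on $X\times H$, the freeness of $G\actson X$ together with the definition of the canonical cocycle $\om_0$ makes $X_1\times H$ a partial cross section whose cross section equivalence relation is precisely $\cR^\om$. Setting $X_0=G\cdot Y_1$, a non-null $G$-invariant Borel set with $X_0\cap X_1=Y_1$ up to null sets, the subset $Y_1\times H$ is a cross section for $G\actson(X_0\times H)$. I would then transport $E_1$ to a $G$-equivariant conditional expectation $E:L^\infty(X_0\times H)\recht L^\infty(X_0)$ by the induction formula $E(\phi)(g\cdot y)=E_1\bigl((x,h)\mapsto\phi(g\cdot x,\pi(g)h)\bigr)(y)$ for $y\in Y_1$ and $g\in G$, which produces exactly the conditional expectation witnessing amenability of the pair $(X_0\times H,X_0)$.

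The main obstacle is precisely this last induction: one must check that $E$ is well defined, i.e.\ independent of the way almost every point of $X_0$ is written as $g\cdot y$ with $y\in Y_1$ and $g\in G$. Two such representations differ by an element of the cross section relation $\cR|_{Y_1}$, and the crux is that the $\cR$-equivariance of $E_1$ through $\om=\pi\circ\om_0$ cancels this ambiguity exactly; combined with the measurability of the resulting field of means and the $G$-equivariance of $E$, this is the technical heart of the argument and requires the careful cross section bookkeeping of Section \ref{sec.proof-unique-cartan} together with the $\sigma$-finite, modular-scaled normalization of the measures.
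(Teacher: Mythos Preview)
Your approach is correct and shares the opening move with the paper---applying Theorem~\ref{thm.reinterpret} to get a nonzero central projection $z\in\cZ(M)$ with $Mz$ $\Phi_\om$-amenable---but then diverges genuinely. The paper stays in the operator-algebraic setting throughout: it passes to the crossed product $N=L^\infty(X)\rtimes G$, uses the explicit isomorphism $q_1Nq_1\cong B(L^2(K))\ovt M$ from \cite[Lemma~4.5]{KPV13} (with $q_1=1_{K\cdot X_1}$) to transport $\Phi_\om$-amenability to a conditional expectation on a corner of $N\ovt B(L^2(H))$ relative to the coaction $\Phi_\pi$, then reaches the full $Nq_0$ by a central-support argument, and only at the very end restricts to $L^\infty(X_0\times H)$, where the maximal-abelian property of $L^\infty(X_0)\subset Nq_0$ forces the image into $L^\infty(X_0)$. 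You instead leave the von Neumann algebra framework early: restrict $\Om$ to $L^\infty(Y_1\times H)$ to obtain an $\cR$-equivariant conditional expectation $E_1$, and then induce $E_1$ measure-theoretically along the cross section to a $G$-equivariant $E$ on $L^\infty(X_0\times H)$. Your route is conceptually transparent (amenability of the pair on the cross section is the same as amenability of the pair for the group), but the induction step you flag as the ``main obstacle'' is genuinely where the work lies: the displayed formula $E(\phi)(g\cdot y)=E_1\bigl((x,h)\mapsto\phi(g\cdot x,\pi(g)h)\bigr)(y)$ is a pointwise expression for an $L^\infty$-valued map and needs to be made rigorous via a Borel section of the orbit map, with well-definedness coming from the $\cR$-equivariance of $E_1$. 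The paper's detour through the crossed product and \cite{KPV13} buys exactly this: the induction collapses to the standard ``corner to central support'' step plus a maximal-abelian argument, so no measurable-selection bookkeeping is needed.
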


\begin{proof}
Write $M = L(\cR)$. Assume that the Cartan subalgebra $A \subset M$ is not $\om$-compact. By Theorem \ref{thm.reinterpret}, we can take a nonzero central projection $p \in \cZ(M)$ such that $M p$ is $\Phi_\om$-amenable. Write $p = 1_{X_2}$, where $X_2 \subset X_1$ is an $\cR$-invariant Borel set. Put $X_0 = G \cdot X_2$. Then $X_0$ is a non-null $G$-invariant Borel set. We prove that there exists a $G$-equivariant conditional expectation $L^\infty(X_0 \times H) \recht L^\infty(X_0)$.

Since $Mp$ is $\Phi_\om$-amenable and since $p \in L^\infty(X_1)$ so that $\Phi_\om(p) = p \ot 1$, there exists a conditional expectation $M p \ovt B(L^2(H)) \recht \Phi_\om(Mp)$. Since $(X_1,\mu_1)$ is a partial cross section, we can choose a compact neighborhood $K$ of $e$ in $G$ such that $\Psi : K \times X_1 \recht X  : \Psi(k,x) = k \cdot x$ is injective.

Write $N = L^\infty(X) \rtimes G$ and define the coaction $\Phi_\pi : N \recht N \ovt L(H)$ given by
$$\Phi_\pi(F u_g) = F u_g \ot u_{\pi(g)} \quad\text{for all}\;\; F \in L^\infty(X) , g \in G \; .$$
Define the projection $q_1 \in L^\infty(X)$ given by $q_1 = 1_{K \cdot X_1}$. In \cite[Lemma 4.5]{KPV13}, an explicit isomorphism
\begin{equation}\label{eq.iso-with-corner}
q_1 N q_1 \cong B(L^2(K)) \ovt M
\end{equation}
is constructed. Under this isomorphism, the restriction of $\Phi_\pi$ to $q_1 N q_1$ is unitarily conjugate with $\id \ot \Phi_\om$ and the projection $q_2 = 1_{K \cdot X_2}$ corresponds to $1 \ot p$. We thus conclude that there exists a conditional expectation $q_2 N q_2 \ovt B(L^2(H)) \recht \Phi_\pi(q_2 N q_2)$.

Since $q_0 = 1_{X_0}$ is the central support of $q_2$ inside $N$, there also exists a conditional expectation $E : N q_0 \ovt B(L^2(H)) \recht \Phi_\pi(N q_0)$.

We now restrict $E$ to $L^\infty(X_0 \times H) \subset N q_0 \ovt B(L^2(H))$. For all $F \in L^\infty(X_0 \times H)$ and $F' \in L^\infty(X_0)$, we have
$$E(F) \, \Phi_\pi(F') = E(F \, \Phi_\pi(F')) = E(F \, (F' \ot 1)) = E((F' \ot 1) \, F) = E(\Phi_\pi(F') F) = \Phi_\pi(F') \, E(F) \; .$$
Since $L^\infty(X_0) \subset N q_0$ is maximal abelian, it follows that $E(F) \in \Phi_\pi(L^\infty(X_0)) = L^\infty(X_0) \ot 1$. Define the conditional expectation $E_0 : L^\infty(X_0 \times H) \recht L^\infty(X_0)$ such that $E(F) = E_0(F) \ot 1$. Since the action $G \actson L^\infty(X_0 \times H)$ is implemented by the unitary operators $\Phi_\pi(u_g q_0)$, $g \in G$, it follows that $E_0$ is $G$-equivariant. This concludes the proof of the corollary.
\end{proof}

\begin{lemma}\label{lem.reduce-to-core}
Let $N$ be a $\sigma$-finite von Neumann algebra. Assume that the Connes-Takesaki continuous core $\core(N) = N \rtimes_{\sigma^\vphi} \R$ has at most one Cartan subalgebra up to unitary conjugacy. Then the same holds for $N$ itself.
\end{lemma}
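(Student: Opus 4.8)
The strategy is to set up a dictionary between Cartan subalgebras of $N$ and certain $\widehat{\sigma}$-invariant Cartan subalgebras of the core, and then to play the uniqueness hypothesis in $\core(N)$ off against the trace-scaling (dual) flow in order to descend a conjugacy back to $N$. Throughout, let $\widehat{\sigma}$ denote the dual flow of $\R$ on $\core(N) = N \rtimes_{\sigma^\vphi}\R$, characterized by $\widehat{\sigma}_s(x) = x$ for $x \in N$ and $\widehat{\sigma}_s(\lambda_t) = e^{ist}\lambda_t$. Recall that $N = \core(N)^{\widehat{\sigma}}$ and that, by Connes' theorem, the pair $(\core(N),\widehat\sigma)$ does not depend on the chosen state, so that all lifts below live in one canonical core.

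First I would lift a Cartan subalgebra $A \subset N$. Fix a faithful normal conditional expectation $E_A : N \recht A$, a faithful normal state $\vphi_0$ on $A$, and put $\vphi = \vphi_0 \circ E_A$. By Takesaki's theorem $\sigma^\vphi$ globally preserves $A$, and since $A$ is abelian it acts trivially on $A$; hence $A$ commutes with every $\lambda_t$, and I set $\Atil := A \ovt L(\R) \subset \core(N)$. I claim $\Atil$ is a Cartan subalgebra. The one computation that makes this work is that every normalizing unitary $u \in \cN_N(A)$ satisfies $\sigma^\vphi_t(u)u^* \in A$: in the Feldman--Moore model $(N,A) = (L(\cR),L^\infty(X))$ of a Cartan pair the modular flow acts on the groupoid normalizers only through the Radon--Nikodym cocycle, which is $A$-valued. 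Consequently $u\lambda_t u^* = \lambda_t\,\sigma^\vphi_{-t}(u)u^* \in \Atil$, so $u$ normalizes $\Atil$; together with $A$ and the $\lambda_t$ these generate $N \vee L(\R) = \core(N)$, giving regularity. That $\Atil$ is maximal abelian and the range of a trace-preserving normal conditional expectation is cleanest from the Maharam description $\core(N) = L(\widetilde{\cR})$, with $\widetilde{\cR}$ the Maharam extension of $\cR$ on $X \times \R$ and $\Atil = L^\infty(X \times \R)$. Finally $\Atil$ is $\widehat\sigma$-invariant (since $\widehat\sigma$ fixes $A$ and preserves $L(\R)$), the flow acts on its spectrum $X\times\R$ trivially on $X$ and by translation on $\R$, and by ergodicity of translation $\Atil^{\widehat\sigma} = A$, i.e.\ $A = \Atil \cap N$.

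Now take two Cartan subalgebras $A,B \subset N$ with lifts $\Atil,\Btil \subset \core(N)$. Both are Cartan, so by hypothesis there is a unitary $w \in \core(N)$ with $w\Atil w^* = \Btil$. Since $A = \Atil \cap N$ and $B = \Btil \cap N$, it suffices to replace $w$ by a unitary $v \in N = \core(N)^{\widehat\sigma}$ still satisfying $v\Atil v^* = \Btil$: then $vAv^* = v(\Atil\cap N)v^* = \Btil \cap N = B$. Applying $\widehat\sigma_s$ to $w\Atil w^* = \Btil$ and using $\widehat\sigma_s(\Atil)=\Atil$, $\widehat\sigma_s(\Btil)=\Btil$ shows that $c_s := w^*\widehat\sigma_s(w)$ is a unitary in $\cN_{\core(N)}(\Atil)$ defining a $\widehat\sigma$-cocycle. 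Taking $v = wa$ with $a \in \cN_{\core(N)}(\Atil)$ automatically preserves $v\Atil v^* = \Btil$, and $v \in N$ is exactly the condition $c_s = a\,\widehat\sigma_s(a)^*$ for all $s$. So everything reduces to trivializing the normalizer-valued cocycle $(c_s)$.

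The hard part is precisely this last step. The structural feature I would exploit is that the trace-scaling flow $\widehat\sigma$ acts on the spectrum of $\Atil$ as the translation flow on the $\R$-coordinate, which is \emph{free and proper}. A free proper $\R$-flow admits the Borel cross-section $X \times \{0\}$ and a global product decomposition, and cocycles over it can be gauged away by transporting along the flow lines back to the cross-section; concretely I would use the $\R$-parameter to define $a$ by ``flowing $c$ back to time $0$'', and then check that $a$ is a genuine normalizing unitary with $a\,\widehat\sigma_s(a)^* = c_s$. This cohomological vanishing is the technical heart of the lemma. Once it is established, choosing $v = wa \in N$ gives $vAv^* = B$, proving that $N$ inherits uniqueness of the Cartan subalgebra up to unitary conjugacy.
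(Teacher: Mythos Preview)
Your approach differs genuinely from the paper's. The paper avoids any direct cocycle argument and instead uses the type~III intertwining-by-bimodules of \cite{HV12}: given Cartans $A,B \subset N$ and a central projection $z$ with $Az \not\prec_{Nz} Bz$, one extracts a net $a_n \in \cU(Az)$ with $E_B(x^* a_n y)\to 0$ $*$-strongly for all $x,y\in N$, shows this convergence persists in the core for the trace-preserving expectation onto $\Btil$, and thereby contradicts the unitary conjugacy of $\Atil$ and $\Btil$ there. A maximality argument via \cite[Theorem~A.1]{Po01} and \cite[Theorem~2.5]{HV12} then upgrades ``$Az\prec_{Nz}Bz$ for all $z$'' to global unitary conjugacy in $N$. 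This bypasses entirely the question of whether the conjugating unitary in the core can be made $\widehat\sigma$-invariant.

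Your argument has a real gap at exactly the step you flag as the ``technical heart'': trivializing the $\cN_{\core(N)}(\Atil)$-valued cocycle $c_s = w^*\widehat\sigma_s(w)$. The ``flow back to time~$0$'' mechanism you invoke is the standard vanishing argument for cocycles $\R\times Y\to G$ over a free proper $\R$-space $Y$, and it does handle the $\cU(\Atil)$-valued piece cleanly (since $\Atil = L^\infty(X\times\R)$ with the dual flow acting by translation). But $c_s$ lives in the full normalizer, a nonabelian extension of the full group $[\widetilde\cR]$ by $\cU(\Atil)$, and the relevant $\R$-action is by the group automorphisms $\widehat\sigma_s$ of this normalizer, not by a flow on an underlying space carrying the cocycle. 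You would first have to trivialize the induced $[\widetilde\cR]$-valued cocycle, i.e.\ show that every $\R$-cocycle of automorphisms of the Maharam extension $\widetilde\cR$ (intertwining the Maharam flow) is a coboundary inside $[\widetilde\cR]$; properness of the flow on the \emph{spectrum} of $\Atil$ does not yield this, and you give no argument for it. Without this step the descent from $w$ to a $\widehat\sigma$-fixed unitary is unjustified, and the proof is incomplete.
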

\begin{proof}
Let $A$ and $B$ be Cartan subalgebras of $N$. Denote by $E_A : N \recht A$ and $E_B : N \recht B$ the unique faithful normal conditional expectations. Let $z \in \cZ(N)$ be a nonzero central projection. Note that $z \in A \cap B$. The main part of the proof consists in showing that $Az \prec_{Nz} Bz$, where we use the type III variant of Popa's intertwining relation \cite[Section 2]{Po03} as defined in \cite[Definition 2.4]{HV12}. Assuming that $Az \not\prec_{Nz} Bz$, we deduce a contradiction.

By \cite[Theorem 2.3]{HV12}, there exists a net of unitaries $a_n \in \cU(Az)$ such that
\begin{equation}\label{eq.convergence-we-have}
E_B(x^* a_n y) \recht 0 \quad\text{$*$-strongly for all $x,y \in N$.}
\end{equation}

Choose faithful normal states $\vphi$ on $A$ and $\psi$ on $B$. Still denote by $\vphi$ and $\psi$ the faithful normal states on $N$ given by $\vphi \circ E_A$, resp.\ $\psi \circ E_B$.
The continuous core of $N$ can then be realized as $\core_\vphi(N) = N \rtimes_{\sigma^\vphi} \R$ and as $\core_\psi(N) = N \rtimes_{\sigma^\psi} \R$. Denote by $\Theta : \core_\vphi(N) \recht \core_\psi(N)$ the canonical $*$-isomorphism given by Connes' Radon-Nikodym theorem.

Write $\Atil = \Theta(A \rtimes_{\sigma^\vphi} \R)$ and $\Btil = B \rtimes_{\sigma^\psi} \R$. Write $M = \core_\psi(N)$ and note that $\Atil \subset M$ and $\Btil \subset M$ are Cartan subalgebras. Denote by $\Tr$ the canonical faithful normal semifinite trace on $M$ and let $E_{\Btil} : M \recht \Btil$ be the trace preserving conditional expectations. Note that $E_{\Btil}(x) = E_B(x)$ for all $x \in N$. We prove that
\begin{equation}\label{eq.desired-convergence}
E_{\Btil}(x^* a_n y) \recht 0 \quad\text{$*$-strongly for all $x,y \in M$.}
\end{equation}
Since $a_n$ is a net of unitaries in $\Atil z$, once \eqref{eq.desired-convergence} is proved, it follows that the Cartan subalgebras $\Atil$ and $\Btil$ cannot be unitarily conjugate, contradicting the assumptions of the theorem. So once \eqref{eq.desired-convergence} is proved, it follows that $A z \prec_{Nz} Bz$.

Since $\Btil$ is abelian, to prove \eqref{eq.desired-convergence}, it suffices to prove that
\begin{equation}\label{eq.easier-convergence}
\lim_n \| E_{\Btil}(x^* a_n y) \|_{2,\Tr} = 0 \quad\text{for all $x,y \in M$ with $\Tr(x^* x) < \infty$ and $\Tr(y^* y) < \infty$.}
\end{equation}
Approximating $x,y$ in $\| \cdot \|_{2,\Tr}$, it suffices to prove \eqref{eq.easier-convergence} for all $x,y$ of the form $x = x_1 x_0$ and $y = y_1 y_0$ with $x_1,y_1 \in N$ and $x_0,y_0 \in \Btil$ with $\Tr(x_0^* x_0) < \infty$ and $\Tr(y_0^* y_0) < \infty$. But then,
$$E_{\Btil}(x^* a_n y) = x_0^* \, E_{\Btil}(x_1^* a_n y_1) \, y_0 = x_0^* \, E_B(x_1^* a_n y_1) \, y_0 \; ,$$
so that \eqref{eq.easier-convergence} follows from \eqref{eq.convergence-we-have}.

Thus, \eqref{eq.desired-convergence} is proved. As we already explained, it follows that $A z \prec_{Nz} Bz$ for every nonzero central projection $z \in \cZ(N)$.

Let $z_0 \in \cZ(N)$ be the maximal central projection such that $A z_0$ and $B z_0$ are unitarily conjugate inside $N z_0$. Assume that $z_0 \neq 1$ and put $z = 1- z_0$. By the above, $Az \prec_{Nz} Bz$. By the type III version of Popa's \cite[Theorem A.1]{Po01} proved in \cite[Theorem 2.5]{HV12}, there exists a nonzero central projection $z_1 \in \cZ(N) z$ such that $A z_1$ and $B z_1$ are unitarily conjugate. This contradicts the maximality of $z_0$, so that $z_0 = 1$.
\end{proof}

We are now ready to prove Theorem \ref{thm.unique-Cartan-general}.

\begin{proof}[{Proof of Theorem \ref{thm.unique-Cartan-general}}]
Take $G = G_1 \times \cdots \times G_n$ as in the formulation of the theorem. Let $G \actson (X,\mu)$ be an essentially free nonsingular action and assume that the hypotheses of the theorem hold. We have to prove that $N = L^\infty(X) \rtimes G$ has a unique Cartan subalgebra up to unitary conjugacy. By Lemma \ref{lem.reduce-to-core}, it is enough to prove that the continuous core $\core(N)$ has a unique Cartan subalgebra up to unitary conjugacy.

The continuous core $\core(N)$ can be realized as a crossed product $\core(N) = L^\infty(\Xtil) \rtimes G$ where $G \actson (\Xtil,\mutil)$ is the \emph{Maharam extension} given by
$$\Xtil = X \times \R \;\; , \;\; g \cdot (x,t) = \bigl( \, g \cdot x \, , \, t + \log(\delta(g)) + \log(D(g,x))\, \bigr) \; \; , \;\; d\mutil(x,t) = d\mu(x) \times \exp(-t) dt \;  ,$$
where $\delta : G \recht \R_*^+$ is the modular function of $G$ and $D$ is the Radon-Nikodym cocycle for $G \actson (X,\mu)$ determined by
$$\int_X F(g^{-1} \cdot x) \, d\mu(x) = \int_X F(x) \, D(g,x) \, d\mu(x) \; .$$
Note that the action $G \actson \Xtil$ scales the measure $\mutil$ with $\delta^{-1}$.

Let $(X_1,\mu_1)$ be a cross section for $G \actson (\Xtil,\mutil)$. Denote by $\cR_1$ the cross section equivalence relation on $(X_1,\mu_1)$. To prove that $\core(N)$ has a unique Cartan subalgebra, it suffices to prove that $L^\infty(X_1)$ is the unique Cartan subalgebra of $L(\cR_1)$, up to unitary conjugacy. So it suffices to prove that for every non-null Borel set $X_2 \subset X_1$ with $\mu_1(X_2) < \infty$, the restricted equivalence relation $\cR = (\cR_1)|_{X_2}$ has the property that $L^\infty(X_2)$ is the unique Cartan subalgebra of $L(\cR)$ up to unitary conjugacy. Denote by $\mu_2$ the restriction of $\mu_1$ to $X_2$. Then $(X_2,\mu_2)$ is a partial cross section for $G \actson (\Xtil,\mutil)$ and $\mu_2(X_2) < \infty$. Let $A \subset L(\cR)$ be another Cartan subalgebra.

Denote by $\om : \cR \recht G$ the canonical cocycle determined by $\om(x',x) \cdot x = x'$ for all $(x',x) \in \cR$. We claim that $A$ is $\om$-compact. Denote by $\pi_i : G \recht G_i$ the quotient maps and put $\om_i = \pi_i \circ \om$. To prove the claim that $A$ is $\om$-compact, it suffices to prove that $A$ is $\om_i$-compact for every $i \in \{1,\ldots,n\}$. Fix such an $i$ and assume that $A$ is not $\om_i$-compact.

By Corollary \ref{cor.reinterpret}, we find a non-null $G$-invariant Borel set $\Xtil_0 \subset \Xtil$ and a $G$-equivariant conditional expectation $E_0 : L^\infty(\Xtil_0 \times G_i) \recht L^\infty(\Xtil_0)$ w.r.t.\ the action $g \cdot ((x,t), g') = (g \cdot (x,t) , \pi_i(g) g')$.

Denote by $(\beta_s)_{s \in \R}$ the action of $\R$ on $L^\infty(\Xtil)$ given by $s \cdot (g,t) = (g, t+ s)$. Note that this action of $\R$ commutes with the above $G$-action. Write $p=1_{\Xtil_0}$ and denote by $q$ the smallest $(\beta_s)_{s \in \R}$-invariant projection in $L^\infty(\Xtil)$ with $p \leq q$. Note that $q = 1_{X_0 \times \R}$, where $X_0 \subset X$ is a $G$-invariant Borel set. Choose $s_k \in \R$, with $s_0 = 0$, such that $q = \bigvee_{k=0}^\infty \beta_{s_k}(p)$. Inductively define the $G$-invariant projections $p_k \in L^\infty(\Xtil_0)$ given by $p_0 = p$ and
$$p_k = p \, \Bigl(1 - \sum_{i=0}^{k-1} \beta_{s_i-s_k}(p_i) \Bigr) \quad\text{for all $k \geq 1$.}$$
By construction, $q = \sum_k \beta_{s_k}(p_k)$. Choosing a point-weak$^*$ limit point of the sequence
$$E_n : L^\infty(X_0 \times \R \times G_i) \recht L^\infty(X_0 \times \R) : E_n(F) = \sum_{k=0}^n \beta_{s_k} \bigl( E_0\bigl((p_k \ot 1) \, (\beta_{-s_k} \ot \id)(F) \bigr)\bigr) \; ,$$
we obtain a $G$-equivariant conditional expectation $E : L^\infty(X_0 \times \R \times G_i) \recht L^\infty(X_0 \times \R)$. Since $\R$ is amenable, we can take a mean over $\R$ of $\beta_s \circ E \circ (\beta_{-s} \ot \id)$, so that we may assume that $E$ is $G \times \R$-equivariant.

The restriction of $E$ to $L^\infty(X_0) \ovt 1 \ovt L^\infty(G_i)$ then has its image in $L^\infty(X_0 \times \R)^\R = L^\infty(X_0) \ot 1$. So, we find a $G$-equivariant conditional expectation $L^\infty(X_0 \times G_i) \recht L^\infty(X_0)$. Restricting to $L^\infty(X_0)^{G_i^\circ} \ovt L^\infty(G_i)$, we find a $G_i$-equivariant conditional expectation
$$L^\infty(X_0)^{G_i^\circ} \ovt L^\infty(G_i) \recht L^\infty(X_0)^{G_i^\circ} \; .$$
This precisely means that the action $G_i \actson L^\infty(X_0)^{G_i^\circ}$ is amenable in the sense of Zimmer, contrary to our assumptions.

So the claim that $A$ is $\om$-compact is proved. Take a compact subset $K \subset G$ such that $\|P^\om_K(a)\|_2^2 \geq 1/2$ for all $a \in \cU(A)$. Since $K$ is compact and $\om : \cR \recht G$ is the canonical cocycle, the subset $\om^{-1}(K) \subset \cR$ is bounded, meaning that $\om^{-1}(K)$ is the disjoint union of the graphs of finitely many elements $\vphi_i \in [[\cR]]$, $i=1,\ldots,n$, in the full pseudogroup of $\cR$. But then, writing $B = L^\infty(X_2)$,
$$\|P^\om_K(a)\|_2^2 = \sum_{i=1}^n \|E_B(a u_{\vphi_i}^*)\|_2^2$$
for all $a \in L(\cR)$. Since $\|P^\om_K(a)\|_2^2 \geq 1/2$ for all $a \in \cU(A)$, it follows that $A \prec_{L(\cR)} B$, so that $A$ and $B$ are unitarily conjugate by \cite[Theorem A.1]{Po01}.
\end{proof}

\section{Cocycle and orbit equivalence rigidity; proof of Theorem \ref{thm.Wstar-strong-rigidity}}\label{sec.cocycle-OE-rigidity}

Given an irreducible pmp action of $G = G_1 \times G_2$ on a standard probability space $(X,\mu)$, Monod and Shalom proved in \cite[Theorem 1.2]{MS04} a cocycle superrigidity theorem for non-elementary cocycles $G \times X \recht H$ with values in a closed subgroup $H < \Isom(X)$ of the isometry group of a ``negatively curved'' space. It is therefore not surprising that one can also prove a cocycle superrigidity theorem for cocycles with values in a group $H$ satisfying property~(S). We do this in Theorem \ref{thm.cocycle-superrigidity}.

Applying cocycle superrigidity to the cocycles given by a stable orbit equivalence between essentially free, irreducible pmp actions $G_1 \times G_2 \actson (X,\mu)$ and $H_1 \times H_2 \actson (Y,\eta)$, we obtain the following orbit equivalence strong rigidity theorem (see Theorem \ref{thm.OE-strong-rigidity}): if $G_1$ and $G_2$ are nonamenable, while $H_1$ and $H_2$ have property~(S), the actions must be conjugate.

Again, such an orbit equivalence strong rigidity theorem should not come as a surprise: in \cite[Theorem 40]{Sa09}, Sako proved exactly this result when $G_1,G_2$ and $H_1,H_2$ are \emph{countable} groups in class $\cS$. However, he does not use or prove a cocycle superrigidity theorem.

The main novelty of this section is that our approach is surprisingly simple and short.

Given lcsc groups $G$ and $H$ and a nonsingular action $G \actson (X,\mu)$, a Borel cocycle $\om : G \times X \recht H$ is a Borel map satisfying
$$\om(gh,x) = \om(g,h \cdot x) \, \om(h,x) \quad\text{for all}\;\; g,h \in G, x \in X \; .$$
In a measurable context, the slightly more appropriate notion of cocycle is however the following. Denote by $\cM(X,H)$ the Polish group of Borel functions from $X$ to $H$, modulo functions equal almost everywhere. The group $G$ acts continuously on $\cM(X,H)$ by $(\al_g(F))(x) = F(g^{-1} \cdot x)$. Then a cocycle is a continuous map
$$\om : G \recht \cM(X,H) : g \mapsto \om_g \quad\text{satisfying}\;\; \om_{gh} = \al_{h^{-1}}(\om_g) \, \om_h \;\;\text{for all}\;\; g,h \in G \; .$$
Every Borel cocycle $\om$ gives rise to the cocycle $\om_g = \om(g,\cdot)$. Conversely, every cocycle can be realized by a Borel cocycle after removing from $X$ a $G$-invariant Borel set of measure zero, see e.g.\ \cite[Theorem B.9]{Zi84}.

The (measurable) cocycles $\om$ and $\om'$ are called cohomologous if there exists an element $\vphi \in \cM(X,H)$ such that
$$\om'_g = \al_{g^{-1}}(\vphi) \, \om_g \, \vphi^{-1} \quad\text{for all}\;\; g \in G \; .$$
Borel cocycles $\om, \om' :  G \times X \recht H$ are called cohomologous if there exists a Borel map $\vphi : X \recht H$ such that
$$\om'(g,x) = \vphi(g \cdot x) \, \om(g,x) \, \vphi(x)^{-1} \quad\text{for all}\;\; g \in G , x \in X \; .$$
Again, if two Borel cocycles are measurably cohomologous, then they also are Borel cohomologous on a conull $G$-invariant Borel set.

As in \cite[Theorem 1.2]{MS04}, the following cocycle superrigidity theorem says that every ``non-elementary'' cocycle for an irreducible action $G_1 \times G_2 \actson (X,\mu)$ with values in a group with property~(S) is cohomologous to a group homomorphism. In our context, being ``non-elementary'' is expressed by a non relative amenability property introduced in \cite{AD81} (see the discussion preceding Corollary \ref{cor.reinterpret}).

\begin{theorem}\label{thm.cocycle-superrigidity}
Let $G_1,G_2$ and $H$ be lcsc groups and $G_1 \times G_2 \actson (X,\mu)$ a pmp action with $G_2$ acting ergodically. Assume that $H$ has property~(S).
Let $\om : G_1 \times G_2 \times X \recht H$ be a cocycle. Then at least one of the following statements holds.
\begin{enumlist}
\item There exist closed subgroups $K < \Htil < H$ such that $K$ is compact and $K < \Htil$ is normal, and there exists a continuous group homomorphism $\delta : G_1 \recht \Htil/K$ with dense image such that $\om$ is cohomologous to a cocycle $\om_0$ satisfying $\om_0(g_1 g_2,x) \in \delta(g_1) K$ for all $g_i \in G_i$ and a.e.\ $x \in X$.
\item With respect to the action $G_1 \actson X \times H$ given by $g_1 \cdot (x,h) = (g_1 \cdot x , \om(g_1,x) \, h)$ and the factor map $(x,h) \mapsto x$, there exists a $G_1$-equivariant conditional expectation $L^\infty(X \times H) \recht L^\infty(X)$.
\end{enumlist}
\end{theorem}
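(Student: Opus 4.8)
The plan is to establish the dichotomy by fixing the property~(S) map $\eta : H \recht \cS(H)$ for the target group and using it to manufacture a measurable boundary map on $X$, whose behaviour detects exactly which of the two alternatives holds. First I would record the reformulation of the two statements: alternative~(2) is precisely the Zimmer amenability of the skew action $G_1 \actson X \times H$, $g_1\cdot(x,h) = (g_1\cdot x,\,\om(g_1,x)h)$, i.e.\ the existence of an $\om|_{G_1}$-equivariant measurable field of means $x \mapsto m_x$ on $L^\infty(H)$. The condition \eqref{eq.QH-G} says that $\eta$ is asymptotically left-equivariant and right-invariant, so that its weak$^*$ cluster points at infinity should live in a compact $H$-space on which left translation acts amenably and right translation acts trivially. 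The overall strategy mirrors \cite{MS04}: build a genuinely $\om$-equivariant map from $X$ into such a boundary, then exploit the product structure together with ergodicity of $G_2$ to rigidify it into a homomorphism of $G_1$.

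Second, using \eqref{eq.QH-G} I would build from $\eta$ a compact $H$-space $\partial H$ — concretely, the Gelfand spectrum of a unital, two-sided $H$-invariant subalgebra of $C_b(H)$ containing $C_0(H)$ — on which left translation is topologically amenable while right translation is trivial. Turning the merely asymptotic, uniform-on-compacts bi-equivariance of $\eta$ into these two honest properties of $\partial H$ is the technical heart of the argument, and I expect it to be the main obstacle, since the cocycle values $\om(g,x)$ range over a non-compact subset of $H$ and the weak$^*$ limits must be controlled by a barycenter/compactness argument. Once $H \actson \partial H$ is known to be topologically amenable, the standard fixed-point property of amenable actions on fields of compact convex sets yields, for the full group $G = G_1 \times G_2$ and any cocycle, an $\om$-equivariant measurable map $\phi : X \recht \Prob(\partial H)$, i.e.\ $\phi(g\cdot x) = \om(g,x)_*\,\phi(x)$ for all $g \in G$.

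Finally I would run the dichotomy on $\phi$. If the measures $\phi(x)$ admit an $\om|_{G_1}$-equivariant lift to means on all of $L^\infty(H)$ — which happens exactly when the boundary part contributes nothing rigid, using the amenability of $H \actson \partial H$ to push $\partial H$ equivariantly into $\Prob(H)$ — then alternative~(2) holds. Otherwise $\phi$ carries a canonical rigid (atomic) part transforming equivariantly; since $G_2$ acts ergodically on $(X,\mu)$ and commutes with $G_1$, the $H$-orbit type of this part is $G_2$-invariant and hence essentially constant, so $\phi$ is supported on a single closed orbit $H/\Htil$ with $\Htil$ the setwise stabilizer. The triviality of the right action at infinity forces the pointwise stabilizer $K \triangleleft \Htil$ to be compact and normal, and after choosing $\Htil$ minimal, the induced $G$-equivariant map $X \recht H/\Htil$ is, by $G_2$-ergodicity, an essentially constant coset. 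Adjusting $\om$ by the corresponding $\vphi \in \cM(X,H)$ untwists it into a cocycle $\om_0$ with $\om_0(g_1 g_2, x) \in \Htil$, and reducing modulo $K$ yields a continuous homomorphism $\delta : G_1 \recht \Htil/K$ (continuity being automatic for measurable homomorphisms of lcsc groups) with dense image such that $\om_0(g_1 g_2, x) \in \delta(g_1)K$, the $G_2$- and $X$-dependence being absorbed into the compact group $K$. This is alternative~(1).
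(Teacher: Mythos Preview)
Your proposal has a genuine gap at the second step. You claim that topological amenability of $H \actson \partial H$ yields, via ``the standard fixed-point property of amenable actions on fields of compact convex sets,'' an $\om$-equivariant measurable map $\phi : X \recht \Prob(\partial H)$ for an \emph{arbitrary} cocycle $\om$. This is the wrong direction: the fixed-point characterization of Zimmer amenability produces equivariant sections into compact convex bundles when the \emph{source} action $G \actson (X,\mu)$ is amenable. Amenability of the \emph{target} action $H \actson \partial H$ gives approximately $H$-equivariant continuous maps $\partial H \recht \Prob(H)$, but says nothing about manufacturing maps out of $X$. Since $G \actson X$ is an arbitrary pmp action (and in the intended applications $G_1$ is nonamenable), no such $\phi$ is available for free, and without it your subsequent dichotomy has nothing to run on. The ensuing case split (``rigid atomic part'' versus ``lift to means'') is also too vague to evaluate: nothing in the setup forces $\phi(x)$ to carry atoms, and the claimed compactness and normality of the pointwise stabilizer $K$ are not tied to any stated property of $\partial H$.

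The paper's argument bypasses boundaries entirely and is much more elementary. Its dichotomy is on the restriction $\om|_{G_2}$: either some sequence $g_n \in G_2$ has $\om(g_n,\cdot) \recht \infty$ in measure, or not. If such a sequence exists, one simply evaluates the property~(S) map on cocycle values, $\eta_n(x) = \eta(\om(g_n,x)^{-1}) \in \cS(H)$; the cocycle identity together with $[G_1,G_2]=\{e\}$ and \eqref{eq.QH-G} forces $x \mapsto \|\eta_n(x) - \om(g,x)^{-1}\cdot\eta_n(g\cdot x)\|_1 \recht 0$ in measure for each fixed $g \in G_1$, and any point-weak$^*$ limit of the associated normal expectations $L^\infty(X\times H)\recht L^\infty(X)$ is $G_1$-equivariant, giving alternative~(2). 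If no such sequence exists, a direct convex-hull argument in $L^2(X\times H)$ produces a nonzero $G_2$-invariant vector, hence a unit-norm Borel map $\xi : X \recht L^2(H)$ with $\xi(g_2\cdot x)=\om(g_2,x)\cdot\xi(x)$; since $H$-orbits in the unit sphere of $L^2(H)$ are closed with compact stabilizers, $G_2$-ergodicity untwists $\om|_{G_2}$ into a compact $K$, and Zimmer minimality for $K$-valued cocycles combined with the commutation relation pins $\om|_{G_1}$, modulo $K$, inside the normalizer of $K$, producing $\delta$ and alternative~(1). No compactification of $H$ and no amenability hypothesis on $G\actson X$ are needed.
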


\begin{proof}
Throughout the proof, we write $G = G_1 \times G_2$ and we view $G_1$ and $G_2$ as closed subgroups of $G$. We fix a left invariant Haar measure $\lambda$ on $H$. We denote by $h \cdot \xi$ the left translation action of $H$ on $L^2(H)$.

{\bf Formulation of the dichotomy.} We are in precisely one of the following situations.
\begin{enumlist}
\item There exists no sequence $g_n \in G_2$ such that $\om(g_n,\cdot) \recht \infty$ in measure. More precisely, there exists a compact subset $L \subset H$ and an $\eps > 0$ such that for all $g \in G_2$ the set $\{x \in X \mid \om(g,x) \in L\}$ has measure at least $\eps$.

\item There exists a sequence $g_n \in G_2$ such that $\om(g_n,\cdot) \recht \infty$ in measure.
\end{enumlist}

{\bf Case 1.} Fix such a compact set $L \subset H$ and $\eps > 0$. Define the unitary representation
$$\pi : G \recht \cU(L^2(X \times H)) : (\pi(g)^* \xi)(x,h) = \xi(g \cdot x, \om(g,x) h)$$
for all $g \in G, x \in X, h \in H, \xi \in L^2(X \times H)$. Fix a compact subset $L_0 \subset H$ with $\lambda(L_0) > 0$. Given a Borel set $A \subset H$ of finite measure, denote by $1_A \in L^2(H)$ the function equal to $1$ on $A$ and equal to $0$ elsewhere. By our choice of $L$ and $\eps$, we find that
$$\langle \pi(g)^* (1 \ot 1_{LL_0}) , 1 \ot 1_{L_0} \rangle \geq \eps \, \lambda(L_0) \quad\text{for all}\;\; g \in G_2 \; .$$
Taking the unique vector of minimal norm in the closed convex hull of $\{\pi(g) (1 \ot 1_{LL_0}) \mid g \in G_2\}$, it follows that $\pi$ admits a nonzero $G_2$-invariant vector. We thus find a Borel map
$$\xi : X \recht L^2(H) \quad\text{such that}\quad \xi(g_2 \cdot x) = \om(g_2,x) \cdot \xi(x) \;\;\text{for all $g_2 \in G_2$ and a.e.\ $x \in X$,}$$
and such that $\xi$ is not zero a.e. Since $x \mapsto \|\xi(x)\|_2$ is essentially $G_2$-invariant and the action $G_2 \actson (X,\mu)$ is ergodic, we may assume that $\|\xi(x)\|_2 = 1$ for a.e.\ $x \in X$.

Denote by $T \subset L^2(H)$ the unit sphere, defined as $T = \{\xi_0 \in L^2(H) \mid \|\xi_0\|_2 = 1\}$. The left translation action $H \actson T$ has closed orbits and thus $H \backslash T$ is a well defined Polish space. Since the map $x \mapsto H \cdot \xi(x)$ from $X$ to $H \backslash T$ is $G_2$-invariant, it is constant a.e. So we find a unit vector $\xi_0 \in L^2(H)$ and a Borel map $\vphi : X \recht H$ such that $\xi(x) = \vphi(x) \cdot \xi_0$ for a.e.\ $x \in X$. Replacing $\om$ by the cohomologous cocycle given by
$$(g,x) \mapsto \vphi(g \cdot x)^{-1} \, \om(g,x) \, \vphi(x) \; ,$$
we find that $\om(g_2,x) \cdot \xi_0 = \xi_0$ for all $g_2 \in G_2$ and a.e.\ $x \in X$.

Define the closed subgroup $K < H$ given by
$$K = \{s \in H \mid s \cdot \xi_0 = \xi_0 \} \; .$$
Then, $K$ is compact and $\om(g_2,x) \in K$ for all $g_2 \in G_2$ and a.e.\ $x \in X$. By Zimmer's theory for compact group valued cocycles (see \cite[Section 3]{Zi75}), we may further assume that the restricted cocycle
$$\om_2 : G_2 \times X \recht K : \om_2 = \om|_{G_2 \times X}$$
is minimal, in the sense that the associated action $G_2 \actson X \times K$ given by
$$g_2 \cdot (x,k) = (g_2 \cdot x, \om_2(g_2,x) k)$$
is ergodic.

Whenever $g_1 \in G_1$ and $g_2 \in G_2$, we have for a.e.\ $x \in X$
\begin{equation}\label{eq.commutation}
\om(g_1,g_2 \cdot x) \, \om(g_2, x) = \om(g_1 g_2, x) = \om(g_2 g_1,x) = \om(g_2,g_1 \cdot x) \, \om(g_1,x) \; .
\end{equation}
Fix $g_1 \in G_1$. It follows from \eqref{eq.commutation} that for all $g_2 \in G_2$ and a.e.\ $x \in X$, $\om(g_1,g_2 \cdot x) \in K \cdot \om(g_1,x) \cdot K$. Therefore, the map
$$X \recht K \backslash H / K : x \mapsto K \cdot \om(g_1,x) \cdot K$$
is $G_2$-invariant and thus constant a.e. We then find $s \in H$ and Borel maps $\vphi,\psi : X \recht K$ such that ($g_1$ still being fixed) we have $\om(g_1,x) = \vphi(x) \, s \, \psi(x)$ for a.e.\ $x \in X$.

Then \eqref{eq.commutation} becomes
$$\vphi(g_2 \cdot x) \, s \, \psi(g_2 \cdot x) \, \om(g_2,x) = \om(g_2 , g_1 \cdot x) \, \vphi(x) \, s \, \psi(x)$$
for all $g_2 \in G_2$ and a.e.\ $x \in X$. So, the cocycle
$$\om_2' : G_2 \times X \recht K : \om_2'(g_2,x) = \psi(g_2 \cdot x) \, \om_2(g_2,x) \, \psi(x)^{-1}$$
is cohomologous to $\om_2$ (as cocycles for $G_2 \actson X$ with values in the compact group $K$) and takes values in $K \cap s^{-1} K s$. The minimality of $\om_2$ then implies that $K \cap s^{-1} K s = K$. Making a similar reasoning for the cocycle $(g_2,x) \mapsto \om(g_2,g_1 \cdot x)$, which by construction is isomorphic with $\om_2$ and thus minimal as well, we also find that $K \cap s K s^{-1} = K$. Defining the closed subgroup $H' < H$ given by
$$H' := \{ s \in H \mid s K s^{-1}  = K\} \; ,$$
we find that $s \in H'$. By construction, $K < H'$ is normal. We have proved that for every $g_1 \in G_1$, there exists an $s \in H'$ such that $\om(g_1,x) \in s K$ for a.e.\ $x \in X$. We already had $\om(g_2,x) \in K$ for all $g_2 \in G_2$ and a.e.\ $x \in X$. So we find a Borel and thus continuous homomorphism $\delta : G_1 \recht H'/K$ such that $\om(g_1 g_2,x) \in \delta(g_1) K$ for all $g_1 \in G_1$, $g_2 \in G_2$ and a.e.\ $x \in X$. Defining $\Htil < H'$ as the inverse image of the closure of $\delta(G_1)$, the first statement in the theorem holds.

{\bf Case 2.} Fix a sequence $g_n \in G_2$ such that $\om(g_n,\cdot) \recht \infty$ in measure. Fix a map $\eta : H \recht \cS(H)$ as given by property~(S). Define the sequence of Borel maps
$$\eta_n : X \recht \cS(H) : \eta_n(x) = \eta(\om(g_n,x)^{-1}) \; .$$
By \eqref{eq.commutation}, we have for all $g \in G_1$ that
\begin{equation}\label{eq.my-eq-with-om}
\om(g_n,g \cdot x)^{-1} = \om(g,x) \, \om(g_n,x)^{-1} \, \om(g,g_n \cdot x)^{-1} \; .
\end{equation}

Fix $g \in G_1$ and fix $\eps > 0$. Take a compact subset $L \subset H$ such that $\om(g,x) \in L$ for all $x$ in a set of measure at least $1-\eps$. Then take a compact subset $L_1 \subset H$ such that
$$\|\eta(h_1 h h_2^{-1}) - h_1 \cdot \eta(h)\|_1 < \eps$$
for all $h_1,h_2 \in L$ and all $h \in H \setminus L_1$. Finally take $n_0$ such that for all $n \geq n_0$, we have that $\om(g_n,x)^{-1} \in H \setminus L_1$ for all $x$ in a set of measure at least $1-\eps$.

So, for our fixed $g \in G_1$ and for all $n \geq n_0$, there exists a Borel set $X_n \subset X$ of measure at least $1-3 \eps$ such that
$$\om(g,x) \in L \;\; , \;\; \om(g_n,x)^{-1} \in H \setminus L_1 \;\; , \;\; \om(g,g_n \cdot x) \in L$$
for all $x \in X_n$. Applying $\eta$ to \eqref{eq.my-eq-with-om}, we conclude that for our fixed $g \in G_1$ and all $n \geq n_0$, we have
$$\|\eta_n(g \cdot x) - \om(g,x) \cdot \eta_n(x)\|_1 < \eps$$
for all $x \in X_n$. Since $\mu(X_n) \geq 1-3\eps$, we have proved that for every $g \in G_1$, the sequence of functions
\begin{equation}\label{eq.conv-to-zero-in-meas}
x \mapsto \|\eta_n(x) - \om(g,x)^{-1} \cdot \eta_n(g \cdot x)\|_1
\end{equation}
converges to zero in measure.

%

View $\cS(H) \subset L^1(H)$ and define the normal conditional expectations
$$P_n : L^\infty(X \times H) \recht L^\infty(X) : (P_n(F))(x) = \int_H F(x,y) \; (\eta_n(x))(y) \; dy \; .$$
Choose a point-weak$^*$ limit point $P : L^\infty(X \times H) \recht L^\infty(X)$. Since the sequence in \eqref{eq.conv-to-zero-in-meas} converges to zero in measure, $P$ is a $G_1$-equivariant conditional expectation. So the second statement in the theorem holds.
\end{proof}

The cocycle superrigidity theorem \ref{thm.cocycle-superrigidity} implies the following orbit equivalence strong rigidity theorem. As mentioned above, for countable groups, the same result was obtained in \cite[Theorem 40]{Sa09}. Combining Theorem \ref{thm.unique-Cartan} and Theorem \ref{thm.OE-strong-rigidity}, it follows that Theorem \ref{thm.Wstar-strong-rigidity} holds.

Let $G \actson (X,\mu)$ and $H \actson (Y,\eta)$ be essentially free, nonsingular actions of the lcsc groups $G,H$. We say that these actions are \emph{stably orbit equivalent} if they admit cross sections such that the associated cross section equivalence relations are isomorphic.

\begin{theorem}\label{thm.OE-strong-rigidity}
Let $G = G_1 \times G_2$ and $H = H_1 \times H_2$ be unimodular lcsc groups without nontrivial compact normal subgroups. Assume that $G \actson (X,\mu)$ and $H \actson (Y,\eta)$ are essentially free, irreducible pmp actions. Assume that $G_1,G_2$ are nonamenable and that $H_1,H_2$ have property~(S).

If the actions are stable orbit equivalent, they must be conjugate.

More precisely, if $(X_1,\mu_1)$ and $(Y_1,\eta_1)$ are cross sections, with cross section equivalence relations $\cR$ and $\cS$, and if $\pi : X_1 \recht Y_1$ is a nonsingular isomorphism between the equivalence relations $\cR$ and $\cS$, there exist conull $\cR$-invariant (resp.\ $\cS$-invariant) Borel sets $X_2 \subset X_1$ and $Y_2 \subset Y_1$, a Borel bijection $\Delta : G \cdot X_2 \recht H \cdot Y_2$ and a continuous group isomorphism $\delta : G \recht H$ such that
\begin{itemlist}
\item $X_0 = G \cdot X_2$ and $Y_0 = H \cdot Y_2$ are conull Borel sets and $\Delta_*(\mu) = \eta$,
\item $\Delta(g \cdot x) = \delta(g) \cdot \Delta(x)$ for all $g \in G$ and all $x \in X_0$,
\item $\Delta(x) \in H \cdot \pi(x)$ for all $x \in X_2$,
\item $\delta$ is either of the form $\delta_1 \times \delta_2$ where $\delta_i : G_i \recht H_i$ are isomorphisms, or of the form $(g_1,g_2) \mapsto (\delta_2(g_2),\delta_1(g_1))$ where $\delta_1 : G_1 \recht H_2$ and $\delta_2 : G_2 \recht H_1$ are isomorphisms,
\item normalizing the Haar measures $\lambda_G$ and $\lambda_H$ such that $\delta_*(\lambda_G) = \lambda_H$, we have $\covol(X_1) = \covol(Y_1)$.
\end{itemlist}
\end{theorem}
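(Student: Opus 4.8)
The plan is to reduce the stable orbit equivalence to a single Borel cocycle $\om : G \times X \recht H$ and feed it into the cocycle superrigidity theorem \ref{thm.cocycle-superrigidity}. Concretely, starting from the isomorphism $\pi : X_1 \recht Y_1$ intertwining the cross section equivalence relations $\cR$ and $\cS$, I would transport the canonical $H$-valued cocycle of $\cS$ to an $H$-valued cocycle on $\cR$ via $(x',x) \mapsto \om_0^{\cS}(\pi(x'),\pi(x))$, and then induce this to a cocycle $\om$ on the ambient action $G \actson X$ using the local product structure of the cross section: writing each $x \in G \cdot X_1$ as $x = \sigma(x)\cdot\rho(x)$ with $\rho(x) \in X_1$, and setting $\om(g,x)$ to be the transported cocycle evaluated at $(\rho(g\cdot x),\rho(x))$. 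The cocycle identity is immediate from that of $\om_0^{\cS}$, and this $\om$ is the orbit equivalence cocycle encoding $\pi$.

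Since the action $G \actson (X,\mu)$ is irreducible, both $G_1$ and $G_2$ act ergodically, so I can apply Theorem \ref{thm.cocycle-superrigidity} to $\om$ twice: once viewing $G_2$ as the ergodic factor and once viewing $G_1$ as the ergodic factor. In each case I must exclude the second alternative of that theorem. The key point is that the $G_i$-equivariant conditional expectation $L^\infty(X \times H) \recht L^\infty(X)$ of alternative~(2) exhibits Zimmer amenability of the orbit equivalence cocycle restricted to $G_i$; because $\om$ comes from an orbit equivalence between \emph{free} pmp actions, this transfers to amenability of the subrelation generated by $G_i \actson X$, contradicting the nonamenability of $G_i$. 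Hence both applications land in alternative~(1): up to cohomology, $\om$ is captured, modulo a compact subgroup, by a continuous homomorphism on each factor $G_i$ with dense image in a noncompact quotient of $H$.

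The next step is to reconcile these two descriptions into a single homomorphism $\delta : G \recht H$. The two dense-image homomorphisms obtained from the two applications produce commuting nonamenable subgroups of $H = H_1 \times H_2$ (they commute because $G_1$ and $G_2$ commute and cohomology preserves this). I would invoke property~(S) of each factor $H_j$ — which forces commuting nonamenable subgroups to lie essentially in complementary factors — to conclude that $\delta$ splits as a product according to the decomposition, yielding precisely the two possibilities (straight or flipped) of the fourth bullet. The compact ambiguities are then eliminated using the assumption that $G$ and $H$ have no nontrivial compact normal subgroups: after the splitting each compact piece becomes a compact normal subgroup of a factor and must be trivial.

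Finally, writing $\om$ as cohomologous to the honest homomorphism $\delta$ via some $\vphi \in \cM(X,H)$, I would untwist: the cocycle identity for $\delta$ lets me build a Borel map $\Delta$ on a conull $G$-invariant set with $\Delta(g \cdot x) = \delta(g) \cdot \Delta(x)$, and the fact that $\pi$ was an isomorphism of relations forces $\Delta$ to be a measure-class isomorphism onto a conull $H$-invariant set, which in turn forces $\delta$ to be a bijective (hence topological) group isomorphism. The remaining quantitative statements — $\Delta_*(\mu) = \eta$ and $\covol(X_1) = \covol(Y_1)$ under the normalization $\delta_*(\lambda_G) = \lambda_H$ — follow by bookkeeping with the cross section measure formula $\Psi_*(\lambda|_{\cU} \times \mu_1) = \covol(X_1)\,\mu|_{\cU \cdot X_1}$ from Section~\ref{sec.proof-unique-cartan}, transported through $\Delta$ and $\delta$. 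I expect the main obstacle to be the exclusion of alternative~(2): turning cocycle-level amenability into amenability of the $G_i$-subrelation, and thus into a contradiction with the nonamenability of $G_i$, is the step that genuinely uses that $\om$ arises from an orbit equivalence of free actions rather than from an arbitrary cocycle.
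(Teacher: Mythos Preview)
Your overall arc---induce an orbit-equivalence cocycle, apply Theorem~\ref{thm.cocycle-superrigidity}, untwist---matches the paper, but two steps fail as written.

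First, you apply Theorem~\ref{thm.cocycle-superrigidity} to the full cocycle $\om : G \times X \to H$ with $H = H_1 \times H_2$. The hypothesis of that theorem is that the \emph{target} has property~(S), and property~(S) is not preserved under direct products (e.g.\ $\F_2 \times \F_2$ is exact but not in class~$\cS$, hence fails property~(S)). The paper instead writes $\om = (\om_1,\om_2)$ and applies Theorem~\ref{thm.cocycle-superrigidity} to each component $\om_i : G \times X \to H_i$ separately. This component-wise application is also what produces the product form $\delta = \delta_1 \times \delta_2$ (or its flip) directly, so your ``property~(S) forces commuting nonamenable subgroups into complementary factors'' step---which would itself need proof---is not needed.

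Second, your exclusion of alternative~(2) uses the wrong mechanism. A $G_1$-equivariant conditional expectation $L^\infty(X \times H) \to L^\infty(X)$ does not say that the $G_1$-subrelation on $X$ is amenable; the group $H$ has no a priori relation to $G_1$ until you invoke the \emph{inverse} of the orbit equivalence. The paper builds, alongside $\om$, the reverse cocycle $\zeta : H \times Y \to G$ and assembles them into a measure-preserving $(G \times H)$-equivariant bijection $\theta : X \times H \to Y \times G$. The argument is then: if for \emph{both} $j=1,2$ there were a $G_1$-equivariant expectation $L^\infty(X \times H_j) \to L^\infty(X)$, one combines them and integrates against $\mu$ to get a $G_1$-invariant state on $L^\infty(X \times H)$, transports it through $\theta$ to a $G_1$-invariant state on $L^\infty(Y \times G)$ for the action $g_1 \cdot (y,g') = (y, g' g_1^{-1})$, and reads off that $G_1$ is amenable. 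So for each $i$ at most one $j$ falls into alternative~(2); playing $i=1$ and $i=2$ against each other (and noting that once $\om_1$ lands in alternative~(1), it automatically satisfies alternative~(2) for $G_2$ via a $K_1$-invariant state on $L^\infty(H_1)$) forces the diagonal/anti-diagonal splitting. The same coupling $\theta$ is what the paper uses afterwards to show $\Htil_i = H_i$ (embedding $L^\infty(H/\Htil)$ into $L^\infty(Y)$ and using irreducibility of $H \actson Y$) and to prove $\delta$ is bijective (the associated unitary representation of $G$ on $L^2(X \times H)$ becomes, via $\theta$, a multiple of the regular representation, hence has $C_0$ coefficients, forcing $\Ker\delta$ compact and $\delta$ proper). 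You never construct $\theta$, and without it these steps remain unjustified.
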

\begin{proof}
Replacing $X$ and $Y$ by a conull $G$-invariant, resp.\ $H$-invariant, Borel set, we may assume that the Borel actions $G \actson X$ and $H \actson Y$ are free.

Recall that $\mu_1$ is the natural $\cR$-invariant probability measure on $X_1$ and that $\eta_1$ is the natural $\cS$-invariant probability measure on $Y_1$. Normalize the Haar measures $\lambda_G$ and $\lambda_H$ such that $\covol(X_1) =1=\covol(Y_1)$. Take compact neighborhoods $\cU$ of $e$ in $G$ and $\cV$ of $e$ in $H$ such that the maps
\begin{equation}\label{eq.maps-Psi-Phi}
\Psi : \cU \times X_1 \recht X : (k,x) \mapsto k \cdot x \quad\text{and}\quad \Phi : \cV \times Y_1 \recht Y : (l,y) \mapsto l \cdot y
\end{equation}
are injective. By the definition of a cross section and its covolume (see page \pageref{discussion.cross-section}), these maps satisfy
$$\Psi_*\bigl((\lambda_G)|_\cU \times \mu_1\bigr) = \mu|_{\cU \cdot X_1} \quad\text{and}\quad \Phi_*\bigl( (\lambda_H)|_\cV \times \eta_1 \bigr) = \eta|_{\cV \cdot Y_1} \; .$$

Replacing $X_1$ and $Y_1$ by conull Borel subsets that are invariant under the cross section equivalence relations, we may assume that $\pi : X_1 \recht Y_1$ is a Borel isomorphism between $\cR$ and $\cS$. Since $\pi_*(\mu_1)$ is an $\cS$-invariant probability measure on $Y_1$ in the same measure class as $\eta_1$, we have $\pi_*(\mu_1) = \eta_1$. Since $G \cdot X_1$ and $H \cdot Y_1$ are conull and Borel, we may assume that $X = G \cdot X_1$ and $Y = H \cdot Y_1$.

We start by translating the stable orbit equivalence $\pi$ into a measure equivalence between $G$ and $H$. This is quite standard: the discrete group case can be found in \cite[Section 3]{Fu98}, but the locally compact case needs a little bit of care.

Choose a Borel map $p : X \recht X_1$ such that $p(x) \in G \cdot x$ for all $x \in X$ and $p(k \cdot x) = x$ for all $k \in \cU$, $x \in X_1$. Extend $\pi$ to the Borel map $\rho : X \recht Y$ defined by $\rho = \pi \circ p$. Similarly choose a Borel map $q : Y \recht Y_1$ and define $\rhotil : Y \recht X : \rhotil = \pi^{-1} \circ q$. By construction, $\rho(G \cdot x) \in H \cdot \rho(x)$ and $\rhotil(H \cdot y) \in G \cdot \rhotil(y)$ for all $x \in X$ and all $y \in Y$. Since the actions $G \actson X$ and $H \actson Y$ are free, we have unique Borel cocycles
\begin{alignat*}{2}
& \om : G \times X \recht H : \rho(g \cdot x) = \om(g,x) \cdot \rho(x) & \quad\text{for all $g \in G, x \in X$,} \\
& \zeta : H \times Y \recht G : \rhotil(h \cdot y) = \zeta(h,y) \cdot \rhotil(y) & \quad\text{for all $h \in H, y \in Y$.}
\end{alignat*}
Since $\rhotil(\rho(x)) \in G \cdot x$ and $\rho(\rhotil(y)) \in H \cdot y$ for all $x \in X$ and all $y \in Y$, we also have unique Borel maps
$$\vphi : X \recht G : \rhotil(\rho(x)) = \vphi(x) \cdot x \quad\text{and}\quad \psi : Y \recht H : \rho(\rhotil(y)) = \psi(y) \cdot y$$
for all $x \in X$, $y \in Y$.

Define the measure preserving Borel actions $G \times H \actson X \times H$ and $G \times H \actson Y \times G$ given by
\begin{equation}\label{eq.my-actions}
\begin{split}
& (g,h) \cdot (x,h') = (g \cdot x, \om(g,x) \, h' \, h^{-1} ) \; , \\
& (g,h) \cdot (y,g') = (h \cdot y, \zeta(h,y) \, g' \, g^{-1}) \; .
\end{split}
\end{equation}
It is straightforward to check that
$$\theta : X \times H \recht Y \times G : \theta(x,h) = \bigl(h^{-1} \cdot \rho(x)  \, , \,  \zeta(h^{-1},\rho(x)\bigr) \, \vphi(x))$$
is a $G \times H$-equivariant Borel map and that $\theta$ is a bijection with inverse
$$\theta^{-1} : Y \times G \recht X \times H : \theta^{-1}(y,g) = \bigl(g^{-1} \cdot \rhotil(y) \, , \, \om(g^{-1},\rhotil(y)) \, \psi(y)\bigr) \; .$$
Using the maps $\Psi$ and $\Phi$ given by \eqref{eq.maps-Psi-Phi}, one checks that
$$\theta(\Psi(k,x),l^{-1}) = (\Phi(l,\pi(x)),k^{-1}) \quad\text{for all}\;\; k \in \cU, l \in \cV, x \in X_1 \; .$$
Since $\Psi$, $\Phi$ and $\pi$ are measure preserving, it follows that the restriction of $\theta$ to $\cU \cdot X_1 \times \cV^{-1}$ is measure preserving. Since the actions of $G \times H$ on $X \times H$ and $Y \times G$ are measure preserving and since the map $\theta$ is $G \times H$-equivariant, it follows that the entire map $\theta$ is measure preserving.

The main part of the proof consists in using Theorem \ref{thm.cocycle-superrigidity} to show that the cocycle $\om$ is cohomologous to an isomorphism of groups $\delta : G \recht H$. Write $\om(g,x) = (\om_1(g,x),\om_2(g,x)) \in H_1 \times H_2$.

{\bf Claim.} W.r.t.\ the actions $G_1 \actson X \times H_i : g_1 \cdot (x,h_i) = (g_1 \cdot x, \om_i(g_1,x) \, h_i)$, there is at most one $i \in \{1,2\}$ for which there exists a $G_1$-equivariant conditional expectation $L^\infty(X \times H_i) \recht L^\infty(X)$.

To prove this claim, assume that such a conditional expectation exists for both $i=1,2$. Then there also exists a $G_1$-equivariant conditional expectation $L^\infty(X \times H) \recht L^\infty(X)$ w.r.t.\ the action $g_1 \cdot (x,h) = (g_1\cdot x, \om(g_1,x) \, h)$. Composing with the $G$-invariant probability measure $\mu$ on $X$, we find a $G_1$-invariant state on $L^\infty(X \times H)$. Using $\theta$, it follows that $L^\infty(Y \times G_1 \times G_2)$ admits a $G_1$-invariant state, w.r.t.\ the action $g_1 \cdot (y,g_1',g_2) = (y,g_1' g_1^{-1},g_2)$. This implies that $G_1$ is amenable, contrary to our assumptions. So, the claim is proved.

Assume that there is no $G_1$-equivariant conditional expectation $L^\infty(X \times H_1) \recht L^\infty(X)$. We prove that the conclusions of the theorem hold with the group isomorphism $\delta : G \recht H$ being of the form $\delta_1 \times \delta_2$. In the case where there is no $G_1$-equivariant conditional expectation $L^\infty(X \times H_2) \recht L^\infty(X)$, we exchange the roles of $H_1$ and $H_2$ and obtain again that the conclusions of the theorem hold with $\delta$ being of the form $(g_1,g_2) \mapsto (\delta_2(g_2),\delta_1(g_1))$.

Applying Theorem \ref{thm.cocycle-superrigidity} to the cocycle $\om_1$, we find a compact subgroup $K_1 < H_1$, a closed subgroup $\Htil_1 < H_1$ with $K_1$ being a normal subgroup of $\Htil_1$, and a continuous group homomorphism $\delta : G_1 \recht \Htil_1/K_1$ with dense image such that $\om_1$ is cohomologous (as a measurable cocycle) with a cocycle $\omtil_1 : G \times X \recht \Htil_1$ satisfying $\omtil_1(g_1 g_2,x) \in \delta_1(g_1) K_1$ for all $g_i \in G_i$ and $x \in X$.

In particular, there is an isomorphism between the actions of $G$ on $L^\infty(X \times H_1)$ induced by $\om_1$ and $\omtil_1$. Using a $K_1$-invariant state on $L^\infty(H_1)$, it follows that there is a $G_2$-equivariant conditional expectation $L^\infty(X \times H_1) \recht L^\infty(X)$. Applying the claim above to $G_2$ instead of $G_1$, it follows that there is no $G_2$-equivariant conditional expectation $L^\infty(X \times H_2) \recht L^\infty(X)$ w.r.t.\ the action induced by $\om_2$.

We can again apply Theorem \ref{thm.cocycle-superrigidity} and altogether, we find compact subgroups $K_i < H_i$, closed subgroups $\Htil_i < H_i$ with $K_i$ being a normal subgroup of $\Htil_i$, and continuous group homomorphisms $\delta_i : G_i \recht \Htil_i/K_i$ with dense image such that, writing $\delta = \delta_1 \times \delta_2$, $K = K_1 \times K_2$, $\Htil = \Htil_1 \times \Htil_2$, the cocycle $\om$ is cohomologous (as a measurable cocycle) with a cocycle $\omtil : G \times X \recht \Htil$ satisfying $\omtil(g,x) \in \delta(g) K$ for all $g \in G$, $x \in X$.

Since the actions of $G \times H$ on $L^\infty(X \times H)$ induced by $\om$ and $\omtil$ are isomorphic, we find an $H$-equivariant embedding of $L^\infty(H / \Htil)$ into $L^\infty(X \times H)^G$. Using $\theta$, we also find such an embedding into $L^\infty(Y \times G)^G = L^\infty(Y)$. Since the elements of $L^\infty(H_1/\Htil_1) \ot 1 \subset L^\infty(H / \Htil)$ are $H_2$-invariant, the irreducibility of the action $H \actson (Y,\eta)$ implies that $\Htil_1 = H_1$. We similarly find that $\Htil_2 = H_2$. Since we assumed that the groups $H_i$ have no nontrivial compact normal subgroups, we also conclude that $K$ is trivial.

We have proved that $\om$ is cohomologous, as a measurable cocycle, with the continuous group homomorphism $\delta : G \recht H$ having dense image. We now prove that $\delta$ is bijective.
Consider the unitary representation $\Pi$ of $G$ on $L^2(X \times H)$ given by $(\Pi(g) \xi)(x,h) = \xi(g^{-1} \cdot x, \delta(g)^{-1}h)$. Combining the map $\theta$ and the fact that the cocycle $\om$ is cohomologous with $\delta$, the representation $\Pi$ is unitarily conjugate to the representation of $G$ on $L^2(Y \times G)$ given by $(g \cdot \xi)(y,g') = \xi(y,g'g)$. Therefore, $\Pi$ is a multiple of the regular representation. In particular, $\Pi$ has $C_0$-coefficients. So,
$$g \mapsto \langle \Pi(g) (1 \ot 1_D), 1 \ot 1_E \rangle = \lambda_H(\delta(g) D \cap E)$$
is a $C_0$-function on $G$ for all compact subsets $D,E \subset H$. It follows that $\Ker \delta$ is a compact subgroup of $G$ and that $\delta$ is proper in the sense that $\delta(g_n) \recht \infty$ whenever $g_n \recht \infty$. By assumption, $G$ has no nontrivial compact normal subgroups. So, $\Ker \delta = \{e\}$ and $\delta$ is injective. Since $\delta$ is proper, the image of $\delta$ is closed. Since $\delta$ has dense image, we conclude that $\delta$ is surjective. So we have proved that $\delta$ is bijective.

Since the Borel cocycles $\om$ and $\delta$ are cohomologous as measurable cocycles, they are also cohomologous as Borel cocycles on a conull $G$-invariant Borel set $X_0 \subset X$. Since $\theta(X_0 \times H)$ is a conull $G \times H$-invariant Borel subset of $Y \times G$, it must be of the form $Y_0 \times H$. So we can restrict everything to $X_0$ and $Y_0$, and assume that $X_0 = X$ and $Y_0 = Y$. Choose a Borel map $\gamma : X \recht H$ such that $\om(g,x) = \gamma(g \cdot x) \, \delta(g) \, \gamma(x)^{-1}$ for all $g \in G$, $x \in X$. Define the measure preserving Borel bijections
$\theta_1,\theta_2 : X \times H \recht X \times H$ given by
$$\theta_1 (x,h) = (x,\gamma(x)h) \quad\text{and}\quad \theta_2(x,h) = (\delta^{-1}(h^{-1}) \cdot x , h^{-1}) \; .$$
Write $\thetatil = \theta \circ \theta_1 \circ \theta_2$. Consider the measure preserving Borel action of $G \times H$ on $X \times H$ given by
$$(g,h) \cdot (x,h') = (\delta^{-1}(h) \cdot x \, , \, h \, h' \, \delta(g)^{-1}) \; .$$
Still using the action of $G \times H$ on $Y \times G$ defined in \eqref{eq.my-actions}, we get that $\thetatil$ is $G \times H$-equivariant. Define the Borel functions $\Delta : X \recht Y$ and $\gammatil : X \recht G$ such that $\thetatil(x,e) = (\Delta(x) , \gammatil(x))$ for all $x \in X$. Then,
\begin{equation}\label{eq.nice-thetatil}
\thetatil(x,h) = \thetatil\bigl((e,\delta^{-1}(h^{-1})) \cdot (x,e)\bigr) = (e,\delta^{-1}(h^{-1})) \cdot \thetatil(x,e) = (\Delta(x) , \gammatil(x) \, \delta^{-1}(h))
\end{equation}
for all $x \in X$ and $h \in H$. Since $\thetatil$ is bijective and $\delta$ is bijective, also $\Delta : X \recht Y$ must be bijective. Since $\thetatil$ is $H$-equivariant, we have $\Delta(g \cdot x) = \delta(g) \cdot \Delta(x)$ for all $x \in X$.

Since $\thetatil$ is measure preserving and $\delta$ is measure scaling, by \eqref{eq.nice-thetatil}, $\Delta$ must be measure scaling. Since both $\mu$ and $\eta$ are probability measures, it follows that $\Delta$ is measure preserving and thus also that $\delta$ is measure preserving. By construction, $\Delta(x) \in H \cdot \rho(x)$ for all $x \in X$ and thus $\Delta(x) \in H \cdot \pi(x)$ for all $x \in X_1$. This concludes the proof of the theorem.
\end{proof}

\begin{remark}\label{rem.precise-form-iso}
Assume that we are in the situation of Theorem \ref{thm.Wstar-strong-rigidity}. Given the more precise description in Theorem \ref{thm.OE-strong-rigidity} of the conjugacy between the actions $G \actson (X,\mu)$, $H \actson (Y,\eta)$ and its relation to the initial stable orbit equivalence, it follows that up to unitary conjugacy, any $*$-isomorphism $\pi : p(L^\infty(X) \rtimes G) p \recht q (L^\infty(Y) \rtimes H) q$ is the restriction of a $*$-isomorphism of the form
$$\pitil : L^\infty(X) \rtimes G \recht L^\infty(Y) \rtimes H : \pitil(u_g F) =  u_{\delta(g)} \, \Delta_*(\Om_g F) \quad\text{for all}\;\; F \in L^\infty(X) , g \in G \; ,$$
where $\Delta : X \recht Y$ is a pmp isomorphism, $\delta : G \recht H$ is a group isomorphism, $\Delta(g \cdot x) = \delta(g) \cdot \Delta(x)$ for all $g \in G$ and a.e.\ $x \in X$, and $\Om_g \in \cU(L^\infty(X))$ is a scalar cocycle, i.e.\ $\Om_g = \Om(g,\cdot)$ where $\Om : G \times X \recht \T$ is a Borel map satisfying $\Om(gh,x) = \Om(g,h\cdot x) \, \Om(h,x)$ for all $g,h \in G$ and a.e.\ $x \in X$.
\end{remark}

\section{Proof of Theorem \ref{thm.main-tech-stable-normalizer}}

Roughly speaking, Theorem \ref{thm.main-tech-stable-normalizer} follows by appropriately combining the proof of \cite[Proposition 3.6]{BHV15} with the setup and methods in the proof of Theorem \ref{thm.main-tech} in Section \ref{sec.proof-main-tech}.

We start by making a first simplification. We replace $M$ by $B(\ell^2(\N)) \ovt B(\ell^2(\N)) \ovt M$ and define the projection $e = 1 \ot 1 \ot p$ in $M$. We then replace $A$ by $\ell^\infty(\N) \ovt B(\ell^2(\N)) \ovt A$ and view it as a von Neumann subalgebra of $e M e$. We finally replace $p$ by the finite trace projection $e_{00} \ot e_{00} \ot p$ and the coaction $\Phi$ by $\id \ot \Phi$. We are now in the following situation: $M$ is a von Neumann algebra with a faithful normal semifinite trace $\Tr$, $e \in M$ is a projection, $A \subset eMe$ is a von Neumann subalgebra with $\Tr|_A$ being semifinite and $p \in A$ is a projection of finite trace. Moreover, by \cite[Lemma 3.5]{BHV15}, for every $x \in \sN_{pMp}(pAp)$, there exist $u \in \cN_{eMe}(A)$ and $a,b \in pAp$ such that $u a = x = b u$. In particular, $\sN_{pMp}(pAp)\dpr = p \cN_{eMe}(A)\dpr p$. Also, for every partial isometry $v \in \sN_{pMp}(pAp)$ with $v^* v = s$ and $vv^* = t$, there exists a $u \in \cN_{eMe}(A)$ such that $u s = v = t u$.

Assuming that $pAp$ is $\Phi$-amenable, we have to prove that $pAp$ can be $\Phi$-embedded or that $\sN_{pMp}(pAp)\dpr = p \cN_{eMe}(A)\dpr p$ is $\Phi$-amenable. Write $q = \Phi(p)$ and $f = \Phi(e)$.

{\bf Step 1.} If $u \in \cN_{eMe}(A)$, then $p (A \cup \{u\})\dpr p$ is still $\Phi$-amenable.

Since $pAp$ is $\Phi$-amenable, there exists a positive functional $\Om$ on $q(M \ovt B(L^2(G)))q$ that is $\Phi(pAp)$-central and that satisfies $\Om(\Phi(x)) = \Tr(x)$ for all $x \in pMp$. Denote by $E : eMe \recht A$ the unique $\Tr$-preserving normal conditional expectation. The functional $\Om$ gives a conditional expectation
$$P : q(M \ovt B(L^2(G)))q \recht \Phi(pAp)$$
satisfying $P(\Phi(x)) = \Phi(E(x))$ for all $x \in pMp$. We have a canonical isomorphism
$$f(M \ovt B(L^2(G)))f \cong B(\ell^2(\N)) \ovt B(\ell^2(\N)) \ovt q(M \ovt B(L^2(G)))q$$
sending $\Phi(A)$ onto $\ell^\infty(\N) \ovt B(\ell^2(\N)) \ovt \Phi(pAp)$. Denoting by $E_0 : B(\ell^2(\N)) \recht \ell^\infty(\N)$ the normal conditional expectation, taking $E_0 \ot \id \ot P$, we can extend $P$ to a conditional expectation
$$P : f(M \ovt B(L^2(G)))f \recht \Phi(A)$$
satisfying $P(\Phi(x)) = \Phi(E(x))$ for all $x \in eMe$.

For every $n \geq 1$, define
$$P_n : f(M \ovt B(L^2(G)))f \recht \Phi(A) : P_n(T) = \frac{1}{n} \sum_{k=1}^n \Phi(u^k) \, P(\Phi(u^{-k}) T \Phi(u^k)) \, \Phi(u^{-k}) \; .$$
Note that every $P_n$ is a conditional expectation satisfying $P_n(\Phi(x)) = \Phi(E(x))$ for all $x \in eMe$. Define
$$P_0 : f(M \ovt B(L^2(G)))f \recht \Phi(A)$$
as a point-weak$^*$ limit point of the sequence $(P_n)_{n \geq 1}$. Then $P_0$ is a conditional expectation satisfying $P_0(\Phi(x)) = \Phi(E(x))$ for all $x \in eMe$ and
$$P_0(\Phi(u^k) T \Phi(u^{-k})) = \Phi(u^k) P_0(T) \Phi(u^{-k}) \quad\text{for all}\;\; T \in f(M \ovt B(L^2(G)))f \; , \; k \in \Z \; .$$
Define the positive functional $\Om_0$ on $q (M \ovt B(L^2(G))) q$ given by $\Om_0(T) = \Tr(\Phi^{-1}(P_0(T)))$, which is well defined because $P_0(T) \in \Phi(pAp)$. By construction, $\Om_0$ is $\Phi(pAp)$-central and $\Om_0(\Phi(x))=\Tr(x)$ for all $x \in pMp$.

Let $k \in \Z$ and $a \in A$. Put $x_0 = p u^k a p$. Note that $x_0 = u^k b p$ where $b \in A$ is defined as $b = u^{-k} p u^k a$. Using the notation $P' = \Phi^{-1} \circ P_0$, we have for every $T \in q (M \ovt B(L^2(G))) q$ that
\begin{align*}
\Om_0(\Phi(x_0) T) &= \Om_0(\Phi(u^k bp) T) = \Tr(P'(\Phi(u^k) \Phi(bp) T)) \\
&= \Tr(u^k bp P'(T \Phi(u^k)) u^{-k}) = \Tr(P'(T \Phi(u^k)) bp) = \Tr(P'(T \Phi(u^k b p))) \\
&= \Om_0(T \Phi(x_0)) \; .
\end{align*}
Since $\Om_0(\Phi(x)) =\Tr(x)$ for all $x \in pMp$ and since the linear span of $\{p u^k a p \mid k \in \Z , a \in A\}$ is strongly dense in $p (A \cup \{u\})\dpr p$, the Cauchy-Schwarz inequality implies that $\Om_0$ is $p (A \cup \{u\})\dpr p$-central. This concludes the proof of step~1.

{\bf Notations.} Since $G$ has CMAP, we can fix a net $\eta_n \in A(G)$ such that the normal completely bounded maps $m_n : L(G) \recht L(G)$ given by $m_n = (\id \ot \eta_n) \circ \Delta$ satisfy the properties in \eqref{eq.cond-wa} with $\Lambda(G) = 1$. Define $\vphi_n : M \recht M$ given by $\vphi_n = (\id \ot \eta_n) \circ \Phi$. Since $\Phi \circ \vphi_n = (\id \ot m_n) \circ \Phi$, also $\|\vphi_n\|\cb \leq 1$ for all $n$ and $\vphi_n(x) \recht x$ strongly for all $x \in M$. Finally, we denote $\psi_n : pMp \recht pMp : \psi_n(x) = p \vphi_n(x) p$.

Whenever $Q \subset eMe$ is a von Neumann subalgebra, we denote by $\cN_Q$ the von Neumann subalgebra of $B(L^2(Me)) \ovt L(G)$ generated by $\Phi(M)$ and $\rho(Q)$, where $\rho(a)$ is given by right multiplication with $a \in Q$. We write $\cN = \cN_A$.

For every partial isometry $v \in \sN_{pMp}(pAp)$ with $s = v^* v$ and $t = v v^*$, denote by
$$\beta_v : \rho(s) \cN \rho(s) \recht \rho(t) \cN \rho(t)$$
the $*$-isomorphism implemented by right multiplication with $v^*$ on $L^2(Me) \ot L^2(G)$. Note that $\beta_v(\Phi(x)\rho(a)) = \Phi(x) \rho(v a v^*)$.

Define $q_1 = \Phi(p) \rho(p)$. We still denote by $\beta_v$ the normal, completely contractive map $q_1 \cN q_1 \recht q_1 \cN q_1$ given by $T \mapsto \beta_v(\rho(s) T \rho(s))$.

{\bf Step 2.} Let $Q \subset eMe$ be a von Neumann subalgebra such that $A \subset Q$ and such that $pQp$ is $\Phi$-amenable. Then there exists a net of functionals $\mu_n^Q \in (q_1 \cN_Q q_1)_*$ with the following properties.
\begin{enumlist}
\item $\mu_n^Q(\Phi(x)\rho(a)) = \Tr(\psi_n(x) a)$ for all $x \in pMp$ and $a \in p Q p$.
\item $\|\mu_n^Q\| \leq \Tr(p)$ for all $n$.
\end{enumlist}

To prove step~2, in the same way as in step~1 of the proof of Theorem \ref{thm.main-tech}, using the $\Phi$-amenability of $pQp$, we find normal completely contractive maps $\theta_n : q_1 \cN q_1 \recht B(L^2(pMp))$ satisfying $\theta_n(\Phi(x)\rho(a))= \psi_n(x) \rho(a)$ for all $x \in pMp$ and $a \in pQp$. Composing with the vector functional $T \mapsto \langle T p, p\rangle$, which has norm $\Tr(p)$, the proof of step~1 is complete.

{\bf Step 3.} The positive functionals $\om_n = |\mu_n^A|$ in $(q_1 \cN q_1)_*$ satisfy
\begin{enumlist}
\item $\lim_n \om_n(\Phi(x)) = \Tr(x)$ for all $x \in pMp$,
\item $\lim_n \om_n(\Phi(a) \rho(a^*)) = \Tr(p)$ for all $a \in \cU(pAp)$,
\item for every partial isometry $v \in \sN_{pMp}(pAp)$, we have that $\lim_n \| \om_n  \circ \beta_{v^*} - \om_n \circ \Ad \Phi(v)\| = 0$.
\end{enumlist}
Note that, as defined above, the functional $\om_n  \circ \beta_{v^*}$ on $q_1 \cN q_1$ is given by $(\om_n  \circ \beta_{v^*})(\Phi(x) \rho(a)) = \om_n(\Phi(x)\rho(v^* a v))$ for all $x \in pMp$ and $a \in pAp$, while the functional $\om_n \circ \Ad \Phi(v)$ on $q_1 \cN q_1$ is given by $(\om_n \circ \Ad \Phi(v))(\Phi(x) \rho(a)) = \om_n(\Phi(v x v^*) \rho(a))$.

To prove step~3, let $Q \subset eMe$ be a von Neumann subalgebra such that $A \subset Q$ and such that $pQp$ is $\Phi$-amenable. Define $\mu_n^Q$ as in step~2 and denote $\om_n^Q = |\mu_n^Q|$. Since $\|\mu_n^Q\| \leq \Tr(p)$ for all $n$ and $\lim_n \mu_n^Q(q_1) = \Tr(p)$, we find that $\lim_n \|\mu_n^Q - \om_n^Q\| = 0$. Whenever $a \in \cU(pQp)$, we get that $\lim_n \mu_n^Q(\Phi(a)\rho(a^*)) = \Tr(p)$ and thus also, $\lim_n \om_n^Q(\Phi(a)\rho(a^*)) = \Tr(p)$. So the first two properties in step~3 are already proven. It also follows that
$$\lim_n \| (\Phi(a) \rho(a^*))\cdot \om_n^Q - \om_n^Q\| = 0 = \lim_n \| \om_n^Q \cdot (\Phi(a) \rho(a^*)) - \om_n^Q \|$$
for all $a \in \cU(pQp)$ and thus,
\begin{equation}\label{eq.omnQ}
\lim_n \| \Phi(a) \cdot \om_n^Q - \rho(a) \cdot \om_n^Q\| = 0 = \lim_n \| \om_n^Q \cdot \Phi(a) - \om_n^Q \cdot \rho(a)\|
\end{equation}
for all $a \in pQp$.

To prove the third property in step~3, fix a partial isometry $v \in \sN_{pMp}(pAp)$ and write $s = v^* v$. Take $u \in \cN_{eMe}(A)$ such that $v = us$. Define $Q = (A \cup \{u\})\dpr$. By step~1 of the proof, $pQp$ is $\Phi$-amenable. By construction $v \in pQp$. By \eqref{eq.omnQ},
$$\lim_n \| \Phi(v^*) \cdot \om_n^Q \cdot \Phi(v) - \rho(v^*) \cdot \om_n^Q \cdot \rho(v) \| = 0 \; .$$
Restricting these positive functionals to $q_1 \cN q_1$, we find the third property in step~3.

{\bf Notations.} Choose a standard Hilbert space $\cH$ for the von Neumann algebra $\cN$, which comes with the normal $*$-homomorphism $\pi_l : \cN \recht B(\cH)$, the normal $*$-antihomomorphism $\pi_r : \cN \recht B(\cH)$ and the positive cone $\cH^+ \subset \cH$. For every $u \in \cN_{eMe}(A)$, define the automorphism $\beta_u$ of $\cN$ implemented by right multiplication with $u^*$ on $L^2(Me) \ot L^2(G)$ and denote by $W_u \in \cU(\cH)$ its canonical implementation.

Denote by $E_\cZ : pAp \recht \cZ(A)p$ the unique trace preserving conditional expectation (i.e.\ the center valued trace of $pAp$). For every projection $s \in pAp$, denote by $z_s \in \cZ(A)p$ its central support, which equals the support projection of $E_\cZ(s)$. Denote by $\cP_0 \subset \cP(pAp)$ the set of projections $s \in pAp$ for which there exists a $\delta > 0$ such that $E_\cZ(s) \geq \delta z_s$. We then denote $D_s = (E_\cZ(s))^{1/2}$ and we denote by $D_s^{-1}$ the (bounded) inverse of $D_s$ in $\cZ(A) z_s$. As in \cite[Section 3]{BHV15} and using \cite[Lemma 3.9]{BHV15}, we can choose a sequence $a_i \in pAp$ such that
$$\sum_{i=0}^\infty a_i a_i^* = D_s z_s \quad\text{and}\quad \sum_{i=0}^\infty a_i^* a_i = D_s^{-1} s \; .$$
We make once and for all a choice of $a_i$ for each $s \in \cP_0$. We also define
$$T(s) = \sum_{i=0}^\infty \Phi(a_i) \rho(a_i^*) \in q_1 \cN q_1 \; .$$
Note that the series defining $T(s)$ is strongly convergent, so that $T(s)$ is a well defined element of $q_1 \cN q_1$.

For every partial isometry $v \in \sN_{pMp}(pAp)$ with $s = v^* v$ and $t = v v^*$, we denote by $W_v : \pi_l(\rho(s)) \pi_r(\rho(s)) \cH \recht \pi_l(\rho(t)) \pi_r(\rho(t)) \cH$ the canonical unitary implementation of the $*$-iso\-mor\-phism $\beta_v : \rho(s) \cN \rho(s) \recht \rho(t) \cN \rho(t)$.

%

{\bf Step 4.} The canonical implementation $\xi_n \in \pi_l(q_1) \pi_r(q_1) \cH$ of $\om_n$ satisfies the following properties.
\begin{enumlist}
\item $\lim_n \langle \pi_l(\Phi(x)) \xi_n, \xi_n \rangle = \Tr(pxp) = \lim_n \langle \pi_r(\Phi(x)) \xi_n , \xi_n \rangle$ for all $x \in M$,
\item $\lim_n \|\pi_l(\Phi(a)) \xi_n - \pi_r(\rho(a))\xi_n \| = 0$ for all $a \in \cU(pAp)$,
\item Whenever $v \in \sN_{pMp}(pAp)$ is a partial isometry such that $s = v^* v$ and $t = v v^*$ belong to $\cP_0$, we have
\begin{equation}\label{eq.i-like}
\lim_n \| \pi_l(\Phi(v)) \xi_n - \pi_r(T(s)^*) \, \pi_r(\Phi(v)) \, W_v^* \, \pi_r(T(t)) \, \xi_n \| = 0 \; .
\end{equation}
\end{enumlist}

The first two properties follow immediately from the first two properties of $\om_n$ in step~3. To also deduce the third property from step~3, one can literally apply the proof of \cite[Proposition 3.6]{BHV15}.

{\bf Notations and formulation of the dichotomy.} As in the proof of Theorem \ref{thm.main-tech}, the coaction $\Psi : \cN \recht \cN \ovt L(G)$ given by $\Psi = \id \ot \Delta$ has a canonical implementation on the standard Hilbert space $\cH$ given by a nondegenerate $*$-homomorphism $\pi : C_0(G) \recht B(\cH)$. We again distinguish two cases.
\begin{itemlist}
\item {\bf Case 1.} For every $F \in C_0(G)$, we have that $\limsup_n \|\pi(F) \xi_n\| = 0$.
\item {\bf Case 2.} There exists an $F \in C_0(G)$ with $\limsup_n \|\pi(F) \xi_n\| > 0$.
\end{itemlist}

We prove that in case~1, the von Neumann subalgebra $\sN_{pMp}(pAp)\dpr \subset pMp$ is $\Phi$-amenable and that in case~2, the von Neumann subalgebra $pAp \subset pMp$ can be $\Phi$-embedded.

The proof in case~2 is identical to the proof of case~2 in Theorem \ref{thm.main-tech}, because that part of the proof only relies on the first two properties of the net $\xi_n$ in step~4. So from now on, assume that we are in case~1. Choose a positive functional $\Om$ on $B(\cH)$ as a weak$^*$ limit point of the net of vector functionals $T \mapsto \langle T \xi_n,\xi_n \rangle$.

Denote $\cG = \cN_{eMe}(A)$. The group $\cG$ acts on $\cN$ by the automorphisms $\beta_u$, $u \in \cG$. We also consider the diagonal action of $\cG$ on $\cN \otalg \cN\op$ and denote by $D$ the algebraic crossed product $D = (\cN \otalg \cN\op) \rtimesalg \cG$. As a vector space, $D = \cN \otalg \cN\op \otalg \C \cG$ and the product and $*$-operation on $D$ are given by
\begin{align*}
& (x_1 \ot y_1\op \ot u_1) \, (x_2 \ot y_2\op \ot u_2) = x_1 \beta_{u_1}(x_2) \ot (\beta_{u_1}(y_2) y_1)\op \ot u_1 u_2 \quad\text{and}\\
& (x \ot y\op \ot u)^* = \beta_{u^*}(x^*) \ot (\beta_{u^*}(y^*))\op \ot u^* \; .
\end{align*}
We define the $*$-representations
\begin{align*}
& \Theta : D \recht B(\cH) : \Theta(x \ot y\op \ot u) = \pi_l(x) \, \pi_r(y) \, W_u \;\; ,\\
& \Theta_1 : D \recht B(\cH) \ovt L(G) : \Theta_1(x \ot y\op \ot u) = (\pi_l \ot \id)\Psi(x) \, (\pi_r(y) \, W_u \ot 1) \;\; .
\end{align*}

Define the $*$-subalgebras $\cN_i$ of $\cN$ given by
\begin{align*}
\cN_1 &= \lspan \Phi(M) \rho(A) \; ,\\
\cN_2 &= [\cN_1] \; ,\\
\cN_3 &= \Bigl\{ x \in \cN \Bigm| \, \begin{aligned}[t] &\text{there exist sequences $x_i \in M$ and $a_i \in A$ such that} \\ & \text{all $\sum_i x_i^* x_i$, $\sum_i x_i x_i^*$, $\sum_i a_i^* a_i$ and $\sum_i a_i a_i^*$ are bounded} \\ & \text{and}\;\; x = \sum_i \Phi(x_i) \rho(a_i) \, \Bigr\} \; .\end{aligned}
\end{align*}
Each $\cN_i$ is globally invariant under the automorphisms $\beta_u$, $u \in \cG$, and so we have the $*$-subalgebras $D_i \subset D$ defined as $D_i = (\cN_i \otalg \cN_i\op) \rtimesalg \cG$. Note that $\cN_1 \subset \cN_3$, but that the inclusion $\cN_2 \subset \cN_3$ need not hold.

Denote $C = \Tr(p) = \|\Om\|$. We claim that

\begin{equation}\label{eq.major-claim}
|\Om(\Theta(x))| \leq C \, \|\Theta_1(x)\| \quad\text{for all}\;\; x \in D_3 \; .
\end{equation}

To prove \eqref{eq.major-claim}, first note that in exactly the same way as we proved \eqref{eq.even-better-est}, we get that \eqref{eq.major-claim} holds for all $x \in D_1$ and thus also for all $x \in D_2$ by norm continuity.

Whenever $x_i \in M$ and $a_i \in A$ are sequences as in the definition of $\cN_3$ and $x = \sum_i \Phi(x_i) \rho(a_i)$, we can choose a sequence of projections $p_n \in pMp$ such that $p_n \recht p$ strongly and such that for each fixed $n$, the series $p_n \sum_i x_i x_i^* p_n$ is norm convergent. This means that for each $n$, we have that $\Phi(p_n) x \in \cN_2$.

Fix $x \in D_3$. Since the automorphisms $\beta_u$ act as the identity on $\Phi(M)$, it follows that we can find a sequence of projections $p_n \in pMp$ such that $p_n \recht p$ strongly and such that
$$x_n = (\Phi(p_n) \ot 1 \ot 1) \, x \, (1 \ot \Phi(p_n)\op \ot 1) \in D_2$$
for all $n$.

Since $\Om(\pi_l(\Phi(x))) = \Tr(pxp)$ for all $x \in M$, we get that
$$\Om = \Om \cdot \pi_l(p) \quad\text{and}\quad \lim_n \|\Om - \Om \cdot \pi_l(\Phi(p_n)\| = 0 \; .$$
A similar result holds for $\pi_r$ and thus,
$$\lim_n \|\Om - \pi_r(\Phi(p_n)) \cdot \Om \cdot \pi_l(\Phi(p_n))\| = 0 \; .$$
This implies that $\Om(\Theta(x)) = \lim_n \Om(\Theta(x_n))$. Since $x_n \in D_2$, we know that
$$|\Om(\Theta(x_n))| \leq C \, \|\Theta_1(x_n)\| \leq C \, \|\Theta_1(x)\|$$
for all $n$. So, \eqref{eq.major-claim} follows.

By \eqref{eq.major-claim}, we can define the continuous functional $\Om_1$ on the C$^*$-algebra $[\Theta_1(D_3)]$ satisfying $\Om_1(\Theta_1(x)) = \Om(\Theta(x))$ for all $x \in D_3$. It follows that $\Om_1(\Theta_1(x)^* \Theta_1(x)) \geq 0$ for all $x \in D_3$ and thus, $\Om_1$ is positive. Extend $\Om_1$ to a bounded functional on $B(\cH \ot L^2(G))$ without increasing its norm. In particular, $\Om_1$ remains a positive functional.

Let $v \in \sN_{pMp}(pAp)$ be a partial isometry such that $s = v^* v$ and $t = vv^*$ belong to $\cP_0$. Using the same notation as in step~4, we define the operator $Y(v) \in B(\cH)$ given by
$$Y(v) = \pi_r(T(s)^*) \, \pi_r(\Phi(v)) \, W_v^* \, \pi_r(T(t)) \; .$$
Note that $Y(v)$ commutes with $\pi_l(\Phi(M))$. Also note that $Y(v)^* = Y(v^*)$. Take $u \in \cN_{eMe}(A)$ such that $v = u s$. Since $W_v = W_u \, \pi_l(\rho(s)) \, \pi_r(\rho(s))$, we define the element $y(v) \in D_3$ given by
$$y(v) = (\rho(s) \ot (\rho(s) \Phi(v) T(s)^*)\op \ot 1) \, (1 \ot 1 \ot u^*) \, (1 \ot T(t)\op \ot 1)$$
and note that $\Theta(y(v)) = Y(v)$ and $\Theta_1(y(v)) = Y(v) \ot 1$.

For every $T \in B(\cH)$, write $\|T\|_{\Om} = \sqrt{\Om(T^*T)}$. Similarly define $\|T\|_{\Om_1} = \sqrt{\Om_1(T^* T)}$ for all $T \in B(\cH \ot L^2(G))$. Applying \eqref{eq.i-like} for $v$ and $v^*$, and using that $Y(v^*) = Y(v)^*$, we find that
\begin{align*}
& \| \Theta( \Phi(v) \ot 1 \ot 1 - y(v))\|_{\Om} = \|\pi_l(\Phi(v)) - Y(v)\|_{\Om} = 0 \;\;\text{and}\\
& \| \Theta( \Phi(v^*) \ot 1 \ot 1 - y(v)^*)\|_{\Om} = \|\pi_l(\Phi(v^*)) - Y(v)^*\|_{\Om} = 0 \;\; .
\end{align*}
Then also
\begin{equation}\label{eq.joepie}
\begin{split}
& \| (\pi_l \circ \Phi \ot \id)(\Phi(v)) - Y(v) \ot 1\|_{\Om_1} \begin{aligned}[t] & = \|\Theta_1(\Phi(v) \ot 1 \ot 1 - y(v))\|_{\Om_1} \\ &= \|\Theta(\Phi(v) \ot 1 \ot 1 - y(v))\|_{\Om} = 0 \;\; , \end{aligned}\\
& \| (\pi_l \circ \Phi \ot \id)(\Phi(v^*)) - Y(v)^* \ot 1\|_{\Om_1} \begin{aligned}[t] & = \|\Theta_1(\Phi(v^*) \ot 1 \ot 1 - y(v)^*)\|_{\Om_1} \\ &= \|\Theta(\Phi(v^*) \ot 1 \ot 1 - y(v)^*)\|_{\Om} = 0 \;\; .\end{aligned}
\end{split}
\end{equation}
%
%
Define the positive functional $\Om_2$ on $q (M \ovt B(L^2(G)))q$ given by $\Om_2 = \Om_1 \circ (\pi_l \circ \Phi \ot \id)$. Since $Y(v) \ot 1$ commutes with $\pi_l(\Phi(M)) \ovt B(L^2(G))$, it follows from \eqref{eq.joepie} that $\Om_2(\Phi(v) T) = \Om_2(T \Phi(v))$ for every partial isometry $v \in \sN_{pMp}(pAp)$ with $v^* v$ and $v v^*$ belonging to $\cP_0$. We also have that $\Om_2(\Phi(x)) = \Tr(x)$ for all $x \in pMp$. Since the linear span of all such partial isometries $v$ is $\|\,\cdot\,\|_2$-dense in $P = \sN_{pMp}(pAp)\dpr$, it follows that $\Om_2$ is $\Phi(P)$-central. So we have proved that $P$ is $\Phi$-amenable. This concludes the proof of Theorem \ref{thm.main-tech-stable-normalizer}.

\section{Stable strong solidity; proof of Theorem \ref{thm.strongly-solid}}

The following is an immediate consequence of Ozawa's solidity theorem \cite{Oz03}.

\begin{proposition}\label{prop.solid-LG}
Let $G$ be a locally compact group with property~(S). Assume that $C^*_r(G)$ is an exact C$^*$-algebra. Then $M = L(G)$ is solid in the sense that for every diffuse von Neumann subalgebra $A \subset M$ that is the range of a normal conditional expectation, $A' \cap M$ is injective.
\end{proposition}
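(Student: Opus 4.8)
The plan is to realize $M = L(G)$ in standard form on $\cH = L^2(G)$ and to verify Ozawa's Akemann--Ostrand condition for the two commuting, weakly dense C$^*$-subalgebras $C^*_\lambda(G) \subset M$ and $C^*_\rho(G) \subset M' = R(G)$. By hypothesis $C^*_\lambda(G) = C^*_r(G)$ is exact, and $C^*_\rho(G)$ is anti-isomorphic to it via the canonical map $\chi$, hence also exact. Once one shows that the multiplication map $C^*_\lambda(G) \odot C^*_\rho(G) \recht B(\cH)/\mathbb{K}$ is continuous for the minimal tensor norm (modulo the appropriate ideal $\mathbb{K}$), Ozawa's solidity theorem \cite{Oz03} applies and gives exactly the stated conclusion: the relative commutant of any diffuse subalgebra of $M$ that is the range of a normal conditional expectation is injective.

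The input for this condition comes from property (S). First I would pass from the $L^1$-valued quasi-homomorphism $\eta : G \recht \cS(G)$ to an $L^2$-valued one by setting $Z_0(g) = \eta(g)^{1/2}$: exactly as in the passage from \eqref{eq.QH-G} to \eqref{eq.what-we-have}, this is a norm-continuous field of unit vectors in $L^2(G)$ with $\lim_{k\recht\infty}\|Z_0(gkh) - \lambda_g Z_0(k)\| = 0$ uniformly on compact sets of $g,h$. From $Z_0$ one builds the isometry
$$V : L^2(G) \recht L^2(G) \ot L^2(G), \qquad (V\zeta)(k,s) = \zeta(k)\, Z_0(k)(s),$$
and a direct computation shows that $V\lambda_g - (\lambda_g \ot \lambda_g)V$ and $V\rho_h - (\rho_h \ot 1)V$ are ``negligible'': each is a diagonal field operator $\zeta \mapsto \bigl((k,s)\mapsto \zeta(k)\,F(k)(s)\bigr)$ with $\|F(k)\|_2 \recht 0$ as $k\recht\infty$. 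Conjugating by $V$ then turns the left-by-right multiplication $\lambda_g\rho_h$ into a spatial tensor modulo these error terms, which is the mechanism behind the Akemann--Ostrand estimate.

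The main obstacle is that, in contrast to the discrete case, these error operators are \emph{not} compact on $L^2(G)$: for non-discrete $G$ the Haar measure is non-atomic, so a diagonal field operator as above has modulus equal to multiplication by the nonzero $C_0$-function $k \mapsto \|F(k)\|_2^2$ and hence lies outside $\mathbb{K}(L^2(G))$. Thus the naive version of the condition with the ideal of compact operators fails, and one must use the ideal adapted to the locally compact setting -- equivalently, phrase the whole estimate as (topological) amenability of the left--right action of $G \times G$ on a boundary, for which property~(S) is precisely the defining condition. Here $\eta$ (respectively $Z_0$) witnesses amenability of this boundary action, so that the corresponding boundary crossed product is nuclear; combined with exactness of $C^*_r(G)$ this yields the desired minimal-norm bound, and Ozawa's theorem then gives solidity. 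I expect the remaining points to be routine: that $L^2(G)$ with the modular conjugation is the standard form of $L(G)$, so that $M' = R(G)$ and $C^*_\rho(G)$ is genuinely dense in the commutant; and, when $G$ is non-unimodular and $M$ is consequently of type~III, that Ozawa's solidity theorem is invoked in its general, non-tracial formulation matching the stated notion of solidity.
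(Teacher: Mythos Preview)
Your overall plan---build the isometry $V=Z_0$ from property~(S) and verify the Akemann--Ostrand condition for $C^*_\lambda(G)$ and $C^*_\rho(G)$, then invoke the type~III version of Ozawa's theorem---is exactly the paper's approach. The detour you propose at the end, however, stems from a mistake, not a genuine obstacle.

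You compute the commutators $V\lambda_g-(\lambda_g\ot\lambda_g)V$ and $V\rho_h-(\rho_h\ot 1)V$ for \emph{individual group elements} $g,h\in G$ and correctly observe that these diagonal field operators are not compact when $G$ is non-discrete. But the Akemann--Ostrand condition is a statement about $a,b\in C^*_r(G)$, not about the multipliers $\lambda_g$. For $a=\lambda_f$ with $f\in C_c(G)$ one has
\[
Z_0\,\lambda_f-(\lambda\ot\lambda)\Delta(\lambda_f)\,Z_0=\int_G f(g)\,M_{\tilde F_g}\lambda_g\,dg,
\]
where $\tilde F\in C_0(\operatorname{supp}f\times G,L^2(G))$. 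Approximating $\tilde F$ by elementary tensors $\sum_i\alpha_i\ot\xi_i$ with $\alpha_i\in C_c(\operatorname{supp}f\times G)$, the operator becomes a finite sum of terms $T_i\ot\xi_i$ with $T_i=\int f(g)M_{\alpha_i(g,\cdot)}\lambda_g\,dg$; each $T_i$ has a compactly supported integral kernel, hence is Hilbert--Schmidt. The underlying algebraic fact is the imprimitivity identity $[C_0(G)\,C^*_r(G)]=K(L^2(G))$, i.e.\ $C_0(G)\rtimes_{\mathrm{lt}}G\cong K(L^2(G))$: multiplication by a $C_0$-function is not compact by itself, but it becomes compact once smeared by an element of $C^*_r(G)$. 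Thus $Z_0\,a-(\lambda\ot\lambda)\Delta(a)\,Z_0\in K(L^2(G))\otmin L^2(G)$ for all $a\in C^*_r(G)$, and similarly for $\rho(b)$; the Akemann--Ostrand property for the honest compact ideal $K(L^2(G))$ follows directly, with no need to change the ideal or to pass through a boundary-amenability reformulation.

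In short: your $V$ is the right object and your endgame (invoke \cite{Oz03} in its type~III form, as in \cite[Theorem~2.5]{VV05}) is correct; the gap is the claim that the naive Akemann--Ostrand argument fails for non-discrete $G$. It does not, once you work with $a\in C^*_r(G)$ rather than with the unitaries $\lambda_g$.
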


\begin{proof}
By the type III version of Ozawa's theorem \cite{Oz03}, as proved in \cite[Theorem 2.5]{VV05}, it suffices to prove the Akemann-Ostrand property, meaning that the $*$-homomorphism
$$\theta : C^*_r(G) \otalg C^*_r(G) \recht \frac{B(L^2(G))}{K(L^2(G))} : \theta(a \ot b) = \lambda(a) \rho(b) + K(L^2(G))$$
is continuous on the spatial tensor product $C^*_r(G) \otmin C^*_r(G)$.

As in the proof of \eqref{eq.crucial-limit}, property~(S) gives rise to an isometry $Z_0$ that is an adjointable operator from $C_0(G)$ to $C_0(G) \otmin L^2(G)$ with the property that
$$Z_0 \lambda(a) - (\lambda \ot \lambda)\Delta(a) Z_0 \quad\text{and}\quad Z_0 \rho(b) - (\rho(b) \ot 1) Z_0 \quad\text{belong to $K(L^2(G)) \otmin L^2(G)$}$$
for all $a,b \in C^*_r(G)$. The standard representation of $C^*_r(G) \otmin C^*_r(G)$ on $L^2(G) \ot L^2(G)$ is unitarily conjugate to the representation
$$\theta_1 : C^*_r(G) \otmin C^*_r(G) \recht B(L^2(G) \ot L^2(G)) : \theta_1(a \ot b) = (\lambda \ot \lambda)\Delta(a) \, (\rho(b) \ot 1) \; .$$
Since $\theta(a \ot b) = Z_0^* \theta_1(a \ot b) Z_0 + K(L^2(G))$ for all $a,b \in C^*_r(G)$, the Akemann-Ostrand property indeed holds.
\end{proof}

For the proof of Theorem \ref{thm.strongly-solid}, we need the following lemma.

\begin{lemma}\label{lem.reduction-corners}
Let $M$ be a diffuse $\sigma$-finite von Neumann algebra and $p_n \in M$ a sequence of projections such that $p_n \recht 1$ strongly. Then $M$ is stably strongly solid if and only if $p_n M p_n$ is stably strongly solid for every $n$.
\end{lemma}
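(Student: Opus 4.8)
The plan is to work throughout with the amplification characterization of stable strong solidity. By definition $M$ is stably strongly solid when both $M$ and $\widetilde M := B(\ell^2(\N)) \ovt M$ are strongly solid; since strong solidity passes to corners, strong solidity of $\widetilde M$ already forces it for the corner $M$, so \emph{stably strongly solid is equivalent to $\widetilde M$ being strongly solid}. The same applies with $M$ replaced by $p_n M p_n$, and here the key point is that $B(\ell^2(\N)) \ovt (p_n M p_n) = q_n \widetilde M q_n$ where $q_n = 1 \ot p_n \in \widetilde M$ and $q_n \recht 1$ strongly. Thus the lemma reduces to the following statement about a single algebra: for projections $q_n \recht 1$ strongly in $\widetilde M$, the algebra $\widetilde M$ is strongly solid if and only if each $q_n \widetilde M q_n$ is.

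For the forward direction I would simply invoke the standard fact that strong solidity passes to corners (see \cite{BHV15}): if $\widetilde M$ is strongly solid then so is each corner $q_n \widetilde M q_n$, giving that each $p_n M p_n$ is stably strongly solid.

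The substantive direction is the converse. First I would replace the non-central corners by central ones. Each $q_n = 1 \ot p_n$ is a properly infinite projection (its corner contains a copy of $B(\ell^2(\N))$) whose central support in $\widetilde M$ is $1 \ot z_n$, with $z_n \in \cZ(M)$ the central support of $p_n$; hence $q_n$ is equivalent to $z_n \ot 1$ inside the summand $\widetilde M (z_n \ot 1)$, so $q_n \widetilde M q_n \cong \widetilde M(z_n \ot 1) = B(\ell^2(\N)) \ovt (M z_n)$. The hypothesis therefore gives that each $B(\ell^2(\N)) \ovt (M z_n)$ is strongly solid, where $z_n \in \cZ(M)$ and $\bigvee_n z_n = 1$ since $\sup_n p_n = 1$. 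Now let $A \subset \widetilde M$ be diffuse and amenable with a normal conditional expectation and write $P = \cN_{\widetilde M}(A)\dpr$. Because $z_n$ is \emph{central} in $\widetilde M$, the compression $A z_n$ is a genuine von Neumann subalgebra of $\widetilde M z_n$: it is amenable, it carries the restricted expectation, and it is diffuse (the map $a \mapsto a z_n$ is a normal $*$-isomorphism of $A c_n$ onto $A z_n$, where $c_n \in \cZ(A)$ is the central carrier of $z_n$, and $A c_n$ is diffuse). Strong solidity of $\widetilde M z_n$ then yields that $\cN_{\widetilde M z_n}(A z_n)\dpr$ is injective, and one checks that $u \mapsto u z_n$ sends $\cN_{\widetilde M}(A)$ into $\cN_{\widetilde M z_n}(A z_n)$, so $P z_n \subseteq \cN_{\widetilde M z_n}(A z_n)\dpr$.

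To conclude I would reassemble using a localization principle for injectivity: if $Q c_n$ is injective for central projections $c_n \in \cZ(Q)$ with $c_n \uparrow 1$, then $Q$ is injective, proved by taking a point-ultraweak limit of the compressed conditional expectations $T \mapsto \Phi_n(c_n T c_n)$ to build a conditional expectation $B(H) \recht Q$. Applying this to $Q = P$, with the central carriers $c_n \in \cZ(P)$ of the $z_n$ (which satisfy $c_n \uparrow 1$ and $P z_n \cong P c_n$), it suffices to show that each $P z_n$ is injective, and this yields $P$ injective, hence $\widetilde M$ strongly solid. \textbf{The main obstacle} is exactly this last point: establishing injectivity of $P z_n$. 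Since $z_n \notin A$, the inclusion $P z_n \subseteq \cN_{\widetilde M z_n}(A z_n)\dpr$ is in general \emph{strict} — a unitary normalizing $A z_n$ in the corner need not lift to a unitary normalizing $A$ in $\widetilde M$ — so injectivity of $P z_n$ cannot be read off merely from being a subalgebra of the injective ambient normalizer. Closing this gap requires either producing a normal conditional expectation onto $P z_n$, or running a relative-amenability argument showing that the $P z_n$-$\cN_{\widetilde M z_n}(A z_n)\dpr$ bimodule is left $P z_n$-amenable; this is where the real work lies, and I would expect to adapt the normalizer estimates developed in the proof of Theorem \ref{thm.main-tech}.
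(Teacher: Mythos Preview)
Your reduction is exactly the paper's: amplify, replace the corners $p_nMp_n$ by the central summands $Mz_n$ via $B(\ell^2(\N))\ovt p_nMp_n\cong B(\ell^2(\N))\ovt Mz_n$ (the paper invokes \cite[Corollary~5.2]{BHV15} here), and then reduce to showing that $\widetilde M$ is strongly solid iff each $\widetilde M z_n$ is. The paper simply asserts this last equivalence is ``easy to check''; your forward direction is fine.

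The obstacle you flag in the converse direction is not a real obstacle, and adapting the machinery of Theorem~\ref{thm.main-tech} would be massive overkill. You already observe that producing a normal conditional expectation onto $Pz_n$ would close the gap; such an expectation exists for an entirely elementary reason. Since $A\subset\widetilde M$ is with normal expectation, choose a faithful normal state $\vphi$ on $\widetilde M$ with $\sigma^\vphi_t(A)=A$ for all $t$. For $u\in\cN_{\widetilde M}(A)$ we have $\sigma^\vphi_t(u)\,A\,\sigma^\vphi_t(u)^*=\sigma^\vphi_t(uAu^*)=\sigma^\vphi_t(A)=A$, so $\sigma^\vphi_t(u)\in\cN_{\widetilde M}(A)$; hence $P=\cN_{\widetilde M}(A)''$ is globally $\sigma^\vphi$-invariant, and Takesaki's theorem gives a normal conditional expectation $\widetilde M\to P$. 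Because $z_n$ is central, this restricts to a normal expectation $\widetilde M z_n\to Pz_n$, in particular from the amenable $\cN_{\widetilde M z_n}(Az_n)''$ onto $Pz_n$; thus $Pz_n$ is amenable. Two further remarks: first, the same Takesaki argument is what furnishes a normal expectation $\widetilde M z_n\to Az_n$ --- the formula $x\mapsto E(x)z_n$ you call ``the restricted expectation'' does \emph{not} fix $Az_n$ in general, since $E(az_n)z_n=aE(z_n)z_n$ and $E(z_n)z_n\ne z_n$ when $z_n\notin A$. Second, $z_n$ already lies in $\cZ(\widetilde M)\subset\cZ(P)$ (central unitaries normalize $A$), so no passage to auxiliary ``central carriers in $\cZ(P)$'' is needed: amenability of each $Pz_n$ together with $\bigvee_n z_n=1$ gives amenability of $P$ directly.
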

\begin{proof}
Write $\cH = \ell^2(\N)$ and denote by $z_n \in \cZ(M)$ the central support of $p_n$. Since $M$ is $\sigma$-finite, we have $B(\cH) \ovt p_n M p_n \cong B(\cH) \ovt M z_n$. By \cite[Corollary 5.2]{BHV15}, we get that $p_n M p_n$ is stably strongly solid if and only if $M z_n$ is stably strongly solid. Since every diffuse von Neumann algebra admits a diffuse amenable (even abelian) von Neumann subalgebra with expectation, it is easy to check that $M$ is stably strongly solid if and only if $Mz_n$ is stably strongly solid for each $n$.
\end{proof}

\begin{proof}[{Proof of Theorem \ref{thm.strongly-solid}}]
First assume that $G$ is unimodular. Fix a Haar measure on $G$ and denote by $\Tr$ the associated faithful normal semifinite trace on $M = L(G)$. Fix a projection $p \in L(G)$ with $\Tr(p) < \infty$. Assume that $G$ is weakly amenable and has property~(S). We have to prove that $p M p$ is strongly solid. So fix a diffuse amenable von Neumann subalgebra $A \subset p M p$. We have to prove that $\cN_{pMp}(A)\dpr$ is amenable.

Denote by $\Delta : L(G) \recht L(G) \ovt L(G) : \Delta(\lambda_g) = \lambda_g \ot \lambda_g$ the comultiplication. View $\Delta$ as a coaction on $M$, so that we can apply Theorem \ref{thm.main-tech}. Since $A$ is amenable, we certainly have that $A$ is $\Delta$-amenable. We next prove that $A$ cannot be $\Delta$-embedded.

Fix a net $a_n \in \cU(A)$ such that $a_n \recht 0$ weakly. For every $\xi,\eta \in L^2(G)$, denote by $\om_{\xi,\eta} \in L(G)_*$ the vector functional given by $\om_{\xi,\eta}(\lambda_g) = \langle \lambda_g \xi,\eta\rangle$. Also denote by $m_{\xi,\eta} : L(G) \recht L(G) : m_{\xi,\eta} = (\id \ot \om_{\xi,\eta}) \circ \Delta$ the associated normal completely bounded map. We claim that
\begin{equation}\label{eq.claim-strong-zero}
m_{\xi,\eta}(a_n) \recht 0 \quad\text{strongly, for all $\xi,\eta \in L^2(G)$.}
\end{equation}
Fix $\xi,\eta \in L^2(G)$ and fix $\mu \in L^2(G)$. To prove \eqref{eq.claim-strong-zero}, we must prove that $\|m_{\xi,\eta}(a_n) \mu\| \recht 0$. Denote by $V \in L^\infty(G) \ovt L(G)$ the unitary given by $V(g) = \lambda_g$ for all $g \in G$. For every $a \in L(G)$, we have
\begin{align*}
m_{\xi,\eta}(a) \mu & = (\id \ot \om_{\xi,\eta})(\Delta(a)) \mu = (\om_{\xi,\eta} \ot \id)(\Delta(a)) \mu \\
& = (\om_{\xi,\eta} \ot \id)(V (a \ot 1) V^*) \mu = (\eta^* \ot 1) V (a \ot 1) V^* (\xi \ot \mu) \; .
\end{align*}
Approximating $V^*(\xi \ot \mu) \in L^2(G) \ot L^2(G)$ by linear combinations of vectors $\xi_0 \ot \mu_0$, it suffices to prove that
$$\lim_n \|(\eta^* \ot 1) V (a_n \ot 1) (\xi_0 \ot \mu_0) \| = 0 \quad\text{for all $\eta,\xi_0,\mu_0 \in L^2(G)$.}$$
Since the operator $(\eta^* \ot 1) V (1 \ot \mu_0)$ belongs to $K(L^2(G))$, this last statement indeed holds and the claim in \eqref{eq.claim-strong-zero} is proved.

For all $\mu_1,\mu_2 \in L^2(Mp)$ and for all $\xi,\eta \in L^2(G)$, we have
$$\langle \Delta(a_n) \, (\mu_1 \ot \xi) \, a_n^* , \mu_2 \ot \eta \rangle = \langle m_{\xi,\eta}(a_n) \mu_1 , \mu_2 a_n \rangle \; .$$
So, \eqref{eq.claim-strong-zero} implies that
$$\lim_n \langle \Delta(a_n) \cdot \xi \cdot a_n^* , \xi \rangle = 0 \quad\text{for all $\xi \in L^2(Mp) \ot L^2(G)$.}$$
So there is no nonzero vector $\xi \in L^2(Mp) \ot L^2(G)$ satisfying $\Delta(a) \xi = \xi a$ for all $a \in A$, meaning that $A$ cannot be $\Delta$-embedded.

Write $P = \cN_{pMp}(A)\dpr$. By Theorem \ref{thm.main-tech}, $P$ is $\Delta$-amenable. Since the $P$-$M$-bimodule
$$\bim{\Delta(P)}{\Delta(p) (L^2(M) \ot L^2(G))}{M}$$
is contained in a multiple of the coarse $P$-$M$-bimodule, it follows that $P$ is amenable. So we have proved that $pMp$ is strongly solid.

Next assume that $G$ is a locally compact second countable group with CMAP and property~(S) such that the kernel $G_0$ of the modular function $\delta : G \recht \R^+$ is an open subgroup of $G$. Fix a left Haar measure on $G$ and denote by $\vphi$ the associated faithful normal semifinite weight on $M = L(G)$. Denote by $\sigma^\vphi$ its modular automorphism group, given by
$$\sigma_t^\vphi(\lambda_g) = \delta(g)^{it} \, \lambda_g \quad\text{for all $g \in G$, $t \in \R$.}$$
So, $L(G_0)$ lies in the centralizer $L(G)^\vphi$ and since $G_0 \subset G$ is an open subgroup, the restriction of $\vphi$ to $L(G_0)$ is semifinite. By Lemma \ref{lem.reduction-corners}, it is sufficient to prove that $p L(G) p$ is stably strongly solid for each nonzero projection $p \in L(G_0)$ with $\vphi(p) < \infty$. Fix such a projection $p$ and let $A \subset p M p$ be a diffuse amenable von Neumann subalgebra with expectation. Write $P = \sN_{pMp}(A)\dpr$. We have to prove that $P$ is amenable.

Denote $\cH = \ell^2(\N)$ and define $M_1 = B(\cH) \ovt M$. Write $A_0 = B(\cH) \ovt A$ and $p_1 = 1 \ot p$. By \cite[Lemma 3.4]{BHV15}, we have to prove that $\cN_{p_1M_1 p_1}(A_0)\dpr$ is amenable. Since $G$ has CMAP, certainly $G$ is exact (see e.g.\ \cite[Corollary E]{BCL16}) and Proposition \ref{prop.solid-LG} implies that $A' \cap p M p$ is amenable. So, $A_1 := A_0 \vee (A_0' \cap p_1 M_1 p_1)$ is amenable. Since $\cN_{p_1M_1 p_1}(A_0)\dpr \subset \cN_{p_1M_1 p_1}(A_1)\dpr$ and since this is an inclusion with expectation, it suffices to show that $P_1 := \cN_{p_1M_1 p_1}(A_1)\dpr$ is amenable.

Let $e \in B(\cH)$ be a minimal projection and choose a faithful normal state $\eta$ on $B(\cH)$ such that $e$ belongs to the centralizer of $\eta$. Also choose a faithful normal state $\psi$ on $p M p$ such that $\sigma_t^\psi(A) = A$ for all $t \in \R$. Note that $\eta \ot \psi$ is a faithful normal state on $p_1 M_1 p_1$. Then $A_1$ and $P_1$ are globally invariant under $\sigma^{\eta \ot \psi}$ and we obtain the canonical inclusions of continuous cores
$$\core_{\eta \ot \psi}(A_1) \subset \core_{\eta \ot \psi}(P_1) \subset \core_{\eta \ot \psi}(p_1 M_1 p_1) \; .$$
Since $A_1' \cap p_1 M_1 p_1 = \cZ(A_1)$, it follows from \cite[Lemma 4.1]{BHV15} that $\core_{\eta \ot \psi}(P_1)$ is contained in the normalizer of $\core_{\eta \ot \psi}(A_1)$. By Takesaki's duality theorem \cite[Theorem X.2.3]{Ta03}, $P_1$ is amenable if and only if its continuous core is amenable. So, it suffices to prove that the normalizer of $\core_{\eta \ot \psi}(A_1)$ is amenable. Now we can cut down again with the projection $e \ot 1$ and conclude that it is sufficient to prove the following result: for any diffuse amenable $B \subset pMp$ with expectation and for every faithful normal state $\psi$ on $pMp$ with $\sigma_t^\psi(B) = B$ for all $t \in \R$, the canonical subalgebra $\core_\psi(B)$ of $\core_\psi(pMp)$ has an amenable stable normalizer.

Whenever $p' \in M^\vphi$ is a projection with $p' \geq p$ and $\vphi(p') < \infty$, we can realize the continuous core of $p'Mp'$ as $\pi_\vphi(p') \cM \pi_\vphi(p')$ where $\cM =  \core_\vphi(M)$. Let
$$\Pi : \core_\psi(pMp) \recht \pi_\vphi(p) \cM \pi_\vphi(p)$$
be the canonical trace preserving isomorphism. Let $p_n \in L_\psi(\R)$ be a sequence of projections having finite trace and converging to $1$ strongly. Since $B$ is diffuse and using Popa's intertwining-by-bimodules \cite[Section 2]{Po03}, it follows from \cite[Lemma 2.5]{HU15} that
\begin{equation}\label{eq.star-star}
\Pi(p_n \core_\psi(B) p_n) \; \underset{\pi_\vphi(p') \cM \pi_\vphi(p')}{\not\prec} \; L_\vphi(\R) \pi_\vphi(p')
\end{equation}
for all $n$ and all projections $p' \in M^\vphi$ with $p' \geq p$ and $\vphi(p') < \infty$. Denote by $\cP$ the set of these projections $p'$ and define the $*$-algebra
$$\cM_0 := \bigcup_{p' \in \cP} \pi_\vphi(p') \cM \pi_\vphi(p') \; .$$
There is a unique linear map $E : \cM_0 \recht L_\vphi(\R)$ such that for every $p' \in \cP$, the restriction of $E$ to $\pi_\vphi(p') \cM \pi_\vphi(p')$ is normal and given by $E(\pi_\vphi(x) \lambda_\vphi(t)) = \vphi(x) \lambda_\vphi(t)$ for all $x \in p' M p'$ and $t \in \R$. Note that this restriction of $E$ can be viewed as $\vphi(p')$ times the unique trace preserving conditional expectation of $\pi_\vphi(p') \cM \pi_\vphi(p')$ onto $L_\vphi(\R) p'$.

Combining \eqref{eq.star-star} and \cite[Theorem 4.3]{HI15}, in order to prove that $\core_\psi(B)$ has an amenable stable normalizer inside $\core_\psi(pMp)$, it is sufficient to prove the following statement: whenever $q \in \pi_\vphi(p) \cM \pi_\vphi(p)$ is a projection of finite trace and $A \subset q \cM q$ is a von Neumann subalgebra that admits a net of unitaries $a_n \in \cU(A)$ satisfying
\begin{equation}\label{eq.such-an-we-have}
E(x^* a_n y) \recht 0 \quad\text{strongly, for all $x,y \in \cM_0$,}
\end{equation}
then the stable normalizer of $A$ inside $q \cM q$ is amenable. Fix such a von Neumann subalgebra $A \subset q \cM q$ and fix a net of unitaries $a_n \in \cU(A)$ satisfying \eqref{eq.such-an-we-have}.

Since $\Delta \circ \sigma_t^\vphi = (\sigma_t^\vphi \ot \id)\circ \Delta$ for all $t \in \R$, there is a well defined coaction given by
$$\Phi : \cM \recht \cM \ovt L(G) : \Phi(\pi_\vphi(x) \lambda_\vphi(t)) = (\pi_\vphi \ot \id)(\Delta(x)) \, (\lambda_\vphi(t) \ot 1)$$
for all $x \in M$, $t \in \R$.

The $\cM$-bimodule $\bim{\Phi(\cM)}{(L^2(\cM) \ot L^2(G))}{\cM \ot 1}$ is isomorphic with $L^2(\cM) \ot_{L_\vphi(\R)} L^2(\cM)$ and thus weakly contained in the coarse $\cM$-bimodule. Using Theorem \ref{thm.main-tech-stable-normalizer}, it only remains to prove that \eqref{eq.such-an-we-have} implies that $A$ cannot be $\Phi$-embedded.

We deduce that $A$ cannot be $\Phi$-embedded from the following approximation result: for all $a \in C^*_r(G_0)$, $\om \in L(G)_*^+$ and $\eps > 0$, there exist $n \in \N$, elements $a_j,x_j \in L(G)$ and scalars $\delta_j > 0$ for $j \in \{1,\ldots,n\}$ such that
\begin{equation}\label{eq.aj-eigenvector}
\sigma_t^\vphi(a_j) = \delta_j^{it} a_j \quad\text{and}\quad \sigma_t^\vphi(x_j) = \delta_j^{-it} x_j
\end{equation}
for all $j \in \{1,\ldots,n\}$ and all $t \in \R$, and such that the map
\begin{equation}\label{eq.map-Psi}
\Psi : \cM \recht \cM : \Psi(x) = \sum_{j,k=1}^n \pi_\vphi(x_j^*) \, E\bigl( \pi_\vphi(pa_j)^* x \pi_\vphi(pa_k)\bigr) \, \pi_\vphi(x_k)
\end{equation}
is normal and completely bounded, and satisfies
\begin{equation}\label{eq.approx}
\bigl\| (\id \ot \om) \Phi\bigl(\pi_\vphi(pa)^* \, \cdot \, \pi_\vphi(pa)\bigr) - \Psi \bigr\|\cb < \eps \; .
\end{equation}
Already note that \eqref{eq.aj-eigenvector} implies that the right support of $p a_j$ belongs to $\cP$, so that $\pi_\vphi(p a_j) \in \cM_0$ and the map $\Psi$ is well defined and normal.

Assuming that such an approximation exists, we already deduce that $A$ cannot be $\Phi$-embedded. It suffices to prove that
$$\lim_n \langle \Phi(a_n) (\mu \ot \xi), \mu a_n \ot \xi \rangle = 0 \quad\text{for all $\mu \in L^2(\cM)$ and $\xi \in L^2(G)$,}$$
because then also $\lim_n \langle \Phi(a_n) \eta (a_n^* \ot 1), \eta \rangle=0$ for all $\eta \in L^2(\cM q) \ot L^2(G)$, excluding the existence of a nonzero vector $\eta \in L^2(\cM q) \ot L^2(G)$ satisfying $\Phi(a) \eta = \eta (a \ot 1)$ for all $a \in A$.

Since $\mu \ot \xi$ can be approximated by vectors of the form $\Phi(\pi_\vphi(a))(\mu \ot \xi)$ with $a \in C^*_r(G_0)$, it suffices to prove that
$$\lim_n \langle \Phi(\pi_\vphi(a)^* a_n \pi_\vphi(a)) (\mu \ot \xi), \mu a_n \ot \xi \rangle = 0 \quad\text{for all $a \in C^*_r(G_0)$, $\mu \in L^2(\cM)$ and $\xi \in L^2(G)$.}$$
Denoting by $\om \in L(G)_*^+$ the vector functional implemented by $\xi$, it is sufficient to prove that
$$(\id \ot \om)(\Phi(\pi_\vphi(a)^* a_n \pi_\vphi(a))) \recht 0 \quad\text{strongly, for all $a \in C^*_r(G_0), \om \in L(G)_*^+$.}$$
But this follows by the approximation in \eqref{eq.approx} and because \eqref{eq.such-an-we-have} implies that $\Psi(a_n) \recht 0$ strongly.

Fixing $a \in C^*_r(G_0)$, $\om \in L(G)_*^+$ and $\eps > 0$, it remains to find the approximation \eqref{eq.approx}.

First take $\xi \in C_c(G)$ such that the vector functional $\om_\xi$ satisfies $\|\om - \om_\xi\| < (1/3) \, \eps \, \|a\|^{-2}$. It follows that, as maps on $M = L(G)$,
\begin{equation}\label{eq.first-approx}
\bigl\| (\id \ot \om) \Delta \bigl(a^* p \, \cdot \, p a\bigr) - (\id \ot \om_\xi) \Delta\bigl(a^* p \, \cdot \, p a\bigr)\bigr) \bigr\|\cb < \frac{\eps}{3} \; .
\end{equation}
Fix $F \in C_c(G)$ with $0 \leq F \leq 1$ and $\xi = F \xi$. For every $x \in M$ and using the unitary $V \in L^\infty(G) \ovt L(G)$ as in the first part of the proof, we have
\begin{align*}
(\id \ot \om_\xi) \Delta\bigl(a^* p \, x \, p a\bigr)\bigr) &= (\om_\xi \ot \id) \Delta\bigl(a^* p \, x \, p a\bigr)\bigr) \\
&= (\xi^* \ot 1) V (a^* p xp a \ot 1) V^* (\xi \ot 1) \\
&= (\xi^* \ot 1) V (F a^* \, pxp \, a F \ot 1)V^* (\xi \ot 1) \; .
\end{align*}
Since $a \in C^*_r(G_0) \subset C^*_r(G)$ and $F \in C_c(G)$, we get that $a F$ is a compact operator on $L^2(G)$ that commutes with the modular function (viewed as a multiplication operator).

So we can approximate $a F$ by a finite rank operator $T$ of the form
$$T = \sum_{j=1}^n \mu_j \xi_j^*$$
where $\xi_j , \mu_j \in C_c(G)$ and $\delta \xi_j = \delta_j \xi_j$, $\delta \mu_j = \delta_j \mu_j$, and such that $\|T\| \leq \|a F \| \leq \|a\|$ and
$$\|a F - T\| \leq \frac{\eps}{3 \, \|a\| \, \|\xi\|^2} \; .$$

Defining
\begin{align*}
& m : M \recht M : m(x) = (\id \ot \om)(\Delta(a^* \, pxp \, a)) \;\; ,\\
& m_1 : M \recht M : m_1(x) = (\xi^* \ot 1) V (T^* \, pxp \, T \ot 1)V^* (\xi \ot 1) \; ,
\end{align*}
we get that $\|m - m_1\|\cb < \eps$.

Defining
$$a_j = \int_G \mu_j(g) \, \lambda_g \, dg \quad\text{and}\quad x_j = (\om_{\xi,\xi_j} \ot \id)(V^*) \; ,$$
we get that
$$m_1(x) = \sum_{j,k=1}^n \vphi(a_j^* \, pxp \, a_k) x_j^* x_k \; .$$
Both $m$ and $m_1$ commute with the modular automorphism group $\sigma^\vphi$ and thus canonically extend to $\cM = \core_\vphi(M)$ by acting as the identity on $L_\vphi(\R)$. The canonical extension of $m$ equals
$$(\id \ot \om) \Phi\bigl(\pi_\vphi(pa)^* \, \cdot \, \pi_\vphi(pa)\bigr) \; ,$$
while the canonical extension of $m_1$ equals the map $\Psi$ given by \eqref{eq.map-Psi}. Since $\|m - m_1\|\cb < \eps$, also \eqref{eq.approx} holds and the theorem is proved.

\end{proof}

\section{Locally compact groups with property (S)}\label{sec.groups-prop-S}

Recall that a compactly generated locally compact group $G$ is said to be \emph{hyperbolic} if the Cayley graph of $G$ with respect to a compact generating set $K \subset G$ satisfying $K = K^{-1}$ is Gromov hyperbolic, in the sense that the metric $d$ on $G$ defined by
$$d(g,h) = \begin{cases} 0 &\quad\text{if $g=h$,}\\
\min \{n \in \N \mid h^{-1} g \in K^n\} &\quad\text{if $g \neq h$,}\end{cases}$$
turns $G$ into a Gromov hyperbolic metric space. When $G$ is non discrete, this Cayley graph is not locally finite and often the action of $G$ on its Cayley graph is not continuous.

By \cite[Corollary 2.6]{CCMT12}, a locally compact group $G$ is hyperbolic if and only if $G$ admits a proper, continuous, cocompact, isometric action on a proper geodesic hyperbolic metric space.

Combining several results from the literature, we have the following list of locally compact groups that are weakly amenable and have property~(S). In the formulation of the proposition, graphs are assumed to be simple, non oriented and connected. We always equip their vertex set with the graph metric. A \emph{hyperbolic graph} is a simple, non oriented, connected graph such that the underlying metric space is Gromov hyperbolic.

\begin{proposition}\label{prop.groups-prop-S}
Let $G$ be a locally compact group. If one of the following conditions holds, then $G$ has the complete metric approximation property and property~(S).
\begin{enumerate}
\item $G$ is $\sigma$-compact and amenable.
\item ({\cite{Ha78,Sz91,Oz03}}) $G$ admits a continuous action on a (not necessarily locally finite) tree that is metrically proper in the sense that for every vertex $x$, we have that $d(x,g \cdot x) \recht \infty$ when $g$ tends to infinity in $G$.
\end{enumerate}
If one of the following conditions holds, then $G$ is weakly amenable and has property~(S).
\begin{enumerate}[resume]
\item $G$ is compactly generated and hyperbolic.
\item ({\cite{Oz03,Oz07}}) $G$ admits a continuous proper action on a hyperbolic graph with uniformly bounded degree.
\item ({\cite{CH88,Sk88}}) $G$ is a real rank one, connected, simple Lie group with finite center.
\end{enumerate}
\end{proposition}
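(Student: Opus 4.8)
The plan is to regard the two asserted approximation properties as essentially citations, and to concentrate all genuine work on producing the map $\eta$ required by property~(S). For the completely bounded approximation constants there is nothing new: in case~1 an amenable $\sigma$-compact group has $\Lambda(G)=1$ via the standard Reiter--F\o{}lner construction of a bounded approximate identity in $A(G)$; in case~2 the complete metric approximation property for groups acting properly on trees is exactly \cite{Ha78,Sz91,Oz03}; in cases~3 and~4 weak amenability of hyperbolic locally compact groups comes from \cite{Oz03,Oz07}, using \cite{CCMT12} to reduce case~3 to a proper cocompact action on a proper geodesic hyperbolic space; and in case~5 it is the Cowling--Haagerup computation \cite{CH88} (where $\Lambda(G)>1$ in general, which is why only weak amenability, and not the complete metric approximation property, is claimed). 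I would then deduce property~(S) for cases 2--5 from a single geometric statement.

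The unified statement is: if $G$ acts continuously, isometrically and metrically properly on a proper geodesic hyperbolic space $Y$ (tree, hyperbolic graph, or rank one symmetric space), with base point $o\in Y$, then $G$ has property~(S). I would build $\eta$ in two steps. First, let $\nu_g\in\Prob(Y)$ be the uniform probability measure on a geodesic from $o$ to $g\cdot o$. The key input from hyperbolic geometry, namely thin triangles and stability of geodesics, yields
\[
\lim_{k\recht\infty}\bigl\|\nu_{gkh}-g_*\nu_k\bigr\|_{\mathrm{TV}}=0
\]
uniformly for $g,h$ in compact sets, because $[o,gkh\cdot o]$ and $[g\cdot o,gk\cdot o]$ fellow-travel except on segments of length $O(d(o,g\cdot o)+d(o,h\cdot o))$ near the ends, which is negligible once $d(o,k\cdot o)$ is large. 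Second, I would transfer $\nu_g$ from $\Prob(Y)$ into $\cS(G)=L^1(G)$. Metric properness forces each point stabiliser $G_y$ to be compact (and compact open when $Y$ is discrete), so one can attach to every $y\in Y$ an equivariant density $\theta_y\in\cS(G)$: on an orbit one sets $\theta_{g\cdot y_0}=g\cdot\bigl(\mathrm{Haar}(G_{y_0})^{-1}1_{G_{y_0}}\bigr)$, well defined modulo the stabiliser, with a measurable choice of representatives across orbits. The transfer $\nu\mapsto\int_Y\theta_y\,d\nu(y)$ is then a contraction from $(\Prob(Y),\|\cdot\|_{\mathrm{TV}})$ to $(\cS(G),\|\cdot\|_1)$ intertwining pushforward with the $G$-action, so $\eta(g):=\int_Y\theta_y\,d\nu_g(y)$ is continuous and satisfies $\|\eta(gkh)-g\cdot\eta(k)\|_1\le\|\nu_{gkh}-g_*\nu_k\|_{\mathrm{TV}}\recht0$, which is precisely \eqref{eq.QH-G}.

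Fitting the cases: in case~2 a tree is $0$-hyperbolic with unique geodesics, so the statement applies verbatim (the only wrinkle, that geodesic vertices may leave the orbit $G\cdot o$, is absorbed by defining $\theta_y$ orbit by orbit). Cases~3 and~4 are the genuinely hyperbolic situation, where geodesics need not be unique; here I would replace $\nu_g$ by an average over geodesics, or by the coarsely defined measure supported near all geodesics from $o$ to $g\cdot o$, the fellow-traveller estimate controlling the total-variation error. Case~5 is cleanest: a rank one simple Lie group with finite center acts transitively (so no orbit-leaving issue) and properly, with compact stabiliser $K$, on its CAT$(-1)$ symmetric space $Y=G/K$, where geodesics are unique, so $\eta(g)=\int_Y\theta_y\,d\nu_g(y)$ works directly. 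The amenable case~1 is treated separately, not through $Y$: fixing a continuous proper length function and a continuous approximately invariant family $(\xi_t)_{t\ge0}$ in $\cS(G)$, one puts $\eta(g)=g\cdot\xi_{t(g)}$ with $t(g)\recht\infty$ as $g\recht\infty$, and after cancelling a left translation the estimate reduces to $\|h\cdot\xi_{t(gkh)}-\xi_{t(k)}\|_1\recht0$.

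The step I expect to be the main obstacle is the transfer from $\Prob(Y)$ to $\cS(G)=L^1(G)$ in the genuinely non-discrete, non-locally-finite settings: one must produce the equivariant assignment $y\mapsto\theta_y$ measurably across infinitely many orbits, check that compactness of stabilisers (coming from metric properness) really makes each $\theta_y$ a bona fide $L^1$-density, and, in cases~3 and~4 where geodesics are not unique, upgrade the coarse fellow-travelling into a genuine $\|\cdot\|_{\mathrm{TV}}$ estimate for the averaged measures $\nu_g$. Ensuring that $\eta$ is actually continuous, in tandem with these measurable choices, is the remaining point to verify; the hyperbolic geometry and the approximation-property inputs are by contrast classical or cited.
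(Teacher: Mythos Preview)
Your approach to cases~2 and~5 is fine and close to the paper's (for~5 the paper simply cites \cite{Sk88}, but your geodesic-measure construction on $G/K$ would also work). Your case~1 sketch has a gap: writing $\eta(g)=g\cdot\xi_{t(g)}$ reduces the estimate to $\|h\cdot\xi_{t(gkh)}-\xi_{t(k)}\|_1\to 0$, but approximate invariance gives no control on $\|\xi_t-\xi_s\|_1$ for large $t\neq s$, so this does not follow. The paper instead takes a F{\o}lner sequence $\eta_n$ with $\|g\cdot\eta_n-\eta_n\|_1\le 2^{-n}$ on $K_n$, sets $\mu(g)=\sum_{n\ge 0}F_n(g)\eta_n$ for cutoff functions $F_n$ vanishing on $K_{n-1}$, and normalizes; the point is that $\|\mu(g)\|_1\to\infty$ while perturbing $g$ changes at most boundedly many terms.

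The real gap is in case~4 (and hence in your treatment of case~3). In a hyperbolic graph, two geodesics with nearby endpoints fellow-travel within $\delta$ but can be \emph{disjoint as vertex sets}, so the uniform measures on them have total-variation distance~$1$; neither ``average over geodesics'' nor ``thicken by $\delta$'' fixes this without a genuinely new idea, because the symmetric difference of the thickened sets can still be of the same order as the sets themselves. The paper does not use geodesic measures at all here. Following \cite{Ka02,BO08}, it defines $A(x,y,k)$ as the set of midpoints of geodesics $[x',y']$ with $d(x,x')\le k$, $d(y,y')\le k$; the bounded-degree hypothesis gives $|A(x,y,k)|\le 2(k+1)B$, and crucially $A(x,y,k-d)\subset A(x',y,k)\subset A(x,y,k+d)$ when $d(x,x')\le d$. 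One then sets $\eta_n(x,y)=\tfrac1n\sum_{k=n+1}^{2n}p(A(x,y,k))$ and uses the AM--GM inequality to turn the telescoping product $\prod |A(x,y,k-d)|/|A(x,y,k+d)|$ into a TV bound; a further summing trick produces a single map $\eta:V\times V\to\Prob(V)$. This midpoint-with-tolerance construction is the missing idea. Relatedly, the paper does not attack case~3 directly: it invokes \cite{CCMT12} and then \cite[Theorem~21 and Proposition~8]{MMS03} to reduce any compactly generated hyperbolic group to one of cases~1,~4,~5 up to compact kernel and finite index, and closes with a stability lemma for property~(S) under such extensions.
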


\begin{proof}
1.\ Since $G$ is amenable, a fortiori $G$ has CMAP. Since $G$ is $\sigma$-compact, we can fix an increasing sequence of compact subsets $K_n \subset G$ such that the interiors $\inter(K_n)$ cover $G$. We make this choice such that $K_0 = \emptyset$, $K_n = K_n^{-1}$, $K_n \subset \inter(K_{n+1})$ and $K_n K_n K_n \subset K_{n+1}$ for all $n$. Since $G$ is amenable, we can choose $\eta_n \in \cS(G)$ such that $\|g \cdot \eta_n - \eta_n \|_1 \leq 2^{-n}$ for all $n \geq 0$ and all $g \in K_n$. Choose continuous functions $F_n : G \recht [0,1]$ such that $F_n(g) = 0$ for all $g \in K_{n-1}$ and $F_n(g) = 1$ for all $g \in G \setminus K_n$. By convention, $F_0(g) = 1$ for all $g \in G$.

Define the continuous function
$$\mu : G \recht L^1(G)^+ : \mu(g) = \sum_{n=0}^\infty F_n(g) \eta_n \; .$$
Note that $n \leq \|\mu(g)\|_1 \leq n+1$ whenever $n \geq 1$ and $g \in K_n \setminus K_{n-1}$. Define $\eta : G \recht \cS(G) : \eta(g) = \|\mu(g)\|_1^{-1} \, \mu(g)$.

Choose $\eps > 0$ and $K \subset G$ compact. Take $n_0$ such that $K \subset K_{n_0}$. Then take $n_1>n_0$ such that $2(2n_0 + 6)/n_1 < \eps$. Fix $g,k \in K$ and $h \in G \setminus K_{n_1}$. We prove that $\|\eta(ghk) - g \cdot \eta(h)\|_1 < \eps$. Once this is proved, it follows that $G$ has property~(S). Take $n \geq n_1$ such that $h \in K_{n+1} \setminus K_n$. Since $K^{-1} K_{n-1} K^{-1} \subset K_n$ and $K K_{n+1} K \subset K_{n+2}$, we have $ghk \in K_{n+2} \setminus K_{n-1}$. Define $\gamma \in L^1(G)$ given by
$$\gamma = \sum_{k=0}^{n} \eta_k \; .$$
By construction, $\|\mu(h) - \gamma\|_1 \leq 1$ and $\|\mu(ghk) - \gamma\|_1 \leq 3$. Also,
$$\|g \cdot \gamma - \gamma\|_1 \leq 2 n_0 + \sum_{k=n_0}^n \|g \cdot \eta_k - \eta_k\|_1 \leq 2 n_0 + \sum_{k=n_0}^n 2^{-k} \leq 2(n_0+1) \; .$$
Altogether, it follows that $\|g \cdot \mu(h) - \mu(ghk)\|_1 \leq 2 n_0 + 6$. Since $\|\mu(h)\|_1 \geq n_1$, it follows that $\|g \cdot \eta(h) - \eta(ghk)\|_1 < \eps$.

2.\ Let $\cG = (V,E)$ be a tree and $G \actson \cG$ a continuous metrically proper action. By \cite[Corollary 12.3.4]{BO08}, the group $G$ has CMAP. For all $x,y \in V$, denote by $A(x,y) \subset V$ the (unique) geodesic between $x$ and $y$. Fix a base point $x_0 \in V$. Define the continuous map $\eta : G \recht \Prob(V)$ by defining $\eta(g)$ as the uniform probability measure on $A(x_0,g \cdot x_0)$. For all $g,h,k \in G$, the symmetric difference between $A(x_0, ghk \cdot x_0)$ and $g \cdot A(x_0,h \cdot x_0)$ contains at most $d(x_0,g \cdot x_0) + d(x_0,k \cdot x_0)$ elements. Since the action $G \actson \cG$ is metrically proper, we have $d(x_0,h \cdot x_0) \recht \infty$ when $h$ tends to infinity in $G$. It then follows that
$$\lim_{h \recht \infty} \|\eta(ghk) - g \cdot \eta(h) \|_1 = 0 \quad\text{uniformly on compact sets of $g,k \in G$.}$$
Since the action $G \actson V$ has compact open stabilizers, there exists a $G$-equivariant isometric map $\Prob(V) \recht \cS(G)$. Composing $\eta$ with this map, it follows that $G$ has property~(S).

3.\ By \cite[Corollary 2.6]{CCMT12}, $G$ admits a proper, continuous, cocompact, isometric action on a proper geodesic hyperbolic metric space. By \cite[Theorem 21 and Proposition 8]{MMS03}, $G$ satisfies at least one of the following three structural properties: $G$ is amenable, or $G$ admits a proper action on a hyperbolic graph with uniformly bounded degree, or $G$ admits closed subgroups $K < G_0 < G$ such that $G_0$ is of finite index and open in $G$, $K$ is a compact normal subgroup of $G_0$ and $G_0/K$ is a real rank one, connected, simple Lie group with finite center. Since we already proved 1, to complete the proof of 3, it suffices to prove 4 and 5 and apply Lemma \ref{lem.stability} below.

4.\ Let $\cG = (V,E)$ be a hyperbolic graph with uniformly bounded degree. Let $G \actson \cG$ be a continuous proper action. By \cite[Theorem 1]{Oz07}, the group $G$ is weakly amenable. The proof of property~(S) is almost identical to the proof of \cite[Theorem 1.33]{Ka02} and especially the version in \cite[Theorem 5.3.15]{BO08}. For completeness, we provide the details here.

We use the following ad hoc terminology. Assume that $[x',y'] \subset V$ is a geodesic. If $d(x',y')$ is even, we call ``mid point of $[x',y']$'' the unique point $z \in [x',y']$ with $d(x',z) = d(z,y') = d(x',y')/2$. If $d(x',y')$ is odd, we declare two points of $[x',y']$ to be the ``mid points of $[x',y']$'', namely the two points $z \in [x',y']$ with $d(x',z) = (d(x',y') \pm 1)/2$ and thus $d(z,y') = (d(x',y') \mp 1)/2$. For all $x,y \in V$ and $k \in \N$, define the nonempty subset $A(x,y,k) \subset V$ given by
\begin{align*}
A(x,y,k) = \bigl\{ z \in V \; \bigm| \; &\text{there exists a geodesic $[x',y'] \subset V$ with $d(x,x') \leq k$ and $d(y,y') \leq k$}\\
&\text{such that $z$ is one of the mid points of $[x',y']$}\;\bigr\} \; .
\end{align*}
Note that $A(x,y,k) = A(y,x,k)$ and $A(g \cdot x, g \cdot y, k) = g \cdot A(x,y,k)$ for all $x,y \in V$, $k \in \N$ and $g \in G$.

Take $\delta > 0$ such that every geodesic triangle in $\cG$ is $\delta$-thin (see \cite[Definition 5.3.3]{BO08}). Define
$$B = \sup_{x \in V} |\{y \in V \mid d(y,x) \leq 2\delta\}| \; .$$
Since $\cG$ has uniformly bounded degree, we have that $B < \infty$. We claim that for all $x,y \in V$ with $d(x,y) \geq 4k$, we have
\begin{equation}\label{eq.est-number-el-A}
|A(x,y,k)| \leq 2 (k+1) B \;  .
\end{equation}
To prove this claim, fix a geodesic $[x,y]$ between $x$ and $y$ and denote by $[a,b] \subset [x,y]$ the unique segment determined by
$$d(x,a) = \lfloor d(x,y)/2 \rfloor - k \quad\text{and}\quad d(x,b) = \lceil d(x,y)/2 \rceil + k \; .$$
Note that $[a,b]$ contains at most $2(k+1)$ vertices. To prove the claim, it thus suffices to show that every $z \in A(x,y,k)$ lies at distance at most $2 \delta$ from a vertex on $[a,b]$.

Choose a geodesic $[x',y'] \subset V$ with $d(x,x') \leq k$ and $d(y,y') \leq k$. Let $z$ be one of the mid points of $[x',y']$. Since $d(x,y) \geq 4k$, the geodesic picture of the five points $x,x',y,y',z$ in a \emph{tree} would look as the following picture on the left.
$$
\begin{tikzpicture}[scale=.4,baseline=(current bounding box.west)]
\draw[line width=1pt] (0,0) -- (2,2) -- (9,2) -- (11.5,0);
\draw[line width=1pt] (1,4) -- (2,2);
\draw[line width=1pt] (9,2) -- (11,3);
\draw[fill] (0,0) circle (5pt) node[anchor=east] {$x'$};
\draw[fill] (1,4) circle (5pt) node[anchor=east] {$x$};
\draw[fill] (2,2) circle (5pt) node[anchor=north west] {$x_0$};
\draw[fill] (9,2) circle (5pt) node[anchor=north east] {$y_0$};
\draw[fill] (11,3) circle (5pt) node[anchor=west] {$y$};
\draw[fill] (11.5,0) circle (5pt) node[anchor=west] {$y'$};
\draw[fill] (5.75,2) circle (5pt) node[anchor=north] {$z$};
\end{tikzpicture}\hspace{7mm}
\begin{tikzpicture}[scale=.7,baseline=(current bounding box.west)]
\draw[line width=1pt] (0,0) .. controls (2,1.7) and (4,1.7) .. (5.75,1.7);
\draw[line width=1pt] (11.5,0) .. controls (9,1.7) and (7.5,1.7) .. (5.75,1.7);
\draw[line width=1pt] (0,0) .. controls (2,2) .. (1,4);
\draw[line width=1pt] (1,4) .. controls (2,2.3) and (4,2.3) .. (5.9,2.3);
\draw[line width=1pt] (11,3) .. controls (9,2.3) and (7.5,2.3) .. (5.9,2.3);
\draw[line width=1pt] (11.5,0) .. controls (9,2) .. (11,3);
\draw[line width=1pt] (1,4) .. controls (2,2) and (4,2) .. (6.1,2);
\draw[line width=1pt] (11.5,0) .. controls (9,2) and (7.5,2) .. (6.1,2);
\draw[fill] (0,0) circle (3pt) node[anchor=east] {$x'$};
\draw[fill] (1,4) circle (3pt) node[anchor=east] {$x$};
\draw[fill] (11,3) circle (3pt) node[anchor=west] {$y$};
\draw[fill] (11.5,0) circle (3pt) node[anchor=west] {$y'$};
\draw[fill] (6.1,2) circle (3pt) node[anchor=south west,xshift=-1,yshift=-2.3] {{\scriptsize $e$}};
\draw[fill] (5.75,1.7) circle (3pt) node[anchor=north] {$z$};
\draw[fill] (5.9,2.3) circle (3pt) node[anchor=south] {$c$};
\end{tikzpicture}
$$
In our comparison tree, some of the ``small'' segments $[x,x_0]$, $[x',x_0]$, $[y,y_0]$, $[y',y_0]$ could be reduced to a single point, but the ``large'' segment $[x_0,y_0]$ has length at least $2k$. Therefore, in the comparison tree, the mid point $z$ of $[x',y']$ lies on the segment $[x_0,y_0]$. We now turn back to segments in the hyperbolic graph $\cG$, as in the picture on the right. Denote by $c \in [x,y]$ the unique point with $d(x,c) = d(x',z)$. By construction, $c \in [a,b]$. To conclude the proof of \eqref{eq.est-number-el-A}, we show that $d(c,z) \leq 2\delta$. Choose a geodesic $[x,y']$ and denote by $e \in [x,y']$ the unique point with $d(x,e) = d(x',z)$. Applying $\delta$-thinness to the geodesic triangle $x,x',y'$, we find that $d(z,e) \leq \delta$. Then applying $\delta$-thinness to the geodesic triangle $x,y,y'$, we get that $d(e,c) \leq \delta$. So, $d(z,c) \leq 2\delta$ and the claim in \eqref{eq.est-number-el-A} is proven.


Given a finite subset $A \subset V$, denote by $p(A)$ the uniform probability measure on $A$. Exactly as in the proof of \cite[Theorem 5.3.15]{BO08}, define the sequence of maps
$$\eta_n : V \times V \recht \Prob(V) : \eta_n(x,y) = \frac{1}{n} \sum_{k=n+1}^{2n} p(A(x,y,k)) \; .$$
For finite sets $A,B \subset V$, we have
$$\frac{1}{2} \, \|p(A) - p(B)\|_1 = 1 - \frac{|A \cap B|}{\max\{|A|,|B|\}} \; .$$
When $d(x,x') \leq d \leq k$, we have
$$A(x,y,k-d) \subset A(x',y,k) \subset A(x,y,k+d) \; .$$
Therefore, whenever $d(x,x') \leq d \leq k$, we have
$$\frac{1}{2} \, \|p(A(x,y,k)) - p(A(x',y,k))\|_1 \leq 1 - \frac{|A(x,y,k-d)|}{|A(x,y,k+d)|} \; .$$
So if $d(x,x') \leq d \leq n$, we use the inequality between arithmetic and geometric mean and get that
\begin{align*}
\frac{1}{2} \, \| \eta_n(x,y) - \eta_n(x',y)\|_1 & \leq 1 - \frac{1}{n} \sum_{k=n+1}^{2n} \frac{|A(x,y,k-d)|}{|A(x,y,k+d)|} \\
& \leq 1 - \left(\prod_{k=n+1}^{2n} \frac{|A(x,y,k-d)|}{|A(x,y,k+d)|}\right)^{1/n} \\
& = 1 - \left(\frac{\prod_{k=n-d+1}^{n+d} |A(x,y,k)|}{\prod_{k=2n-d+1}^{2n+d}|A(x,y,k)|}\right)^{1/n} \; .
\end{align*}
Using \eqref{eq.est-number-el-A}, it follows that whenever $d(x,x') \leq d \leq n$ and $d(x,y) \geq 4(2n+d)$, we have
$$\frac{1}{2} \, \| \eta_n(x,y) - \eta_n(x',y)\|_1 \leq 1 - \bigl( 2 (2n+d+1) B \bigr)^{-2d / n} \; .$$
So, for every $\eps > 0$ and every $d \in \N$, there exists an $n$ such that $$\|\eta_n(x,y) - \eta_n(x',y)\|_1 < \eps$$
for all $x,x',y \in V$ with $d(x,x') \leq d$ and $d(x,y) \geq 4(2n+d)$.

The maps $\eta_n$ are $G$-equivariant in the sense that
$$\eta_n(g \cdot x, g \cdot y) = g \cdot \eta_n(x,y) \quad\text{for all $g \in G$, $x,y \in V$,}$$
and the maps $\eta_n$ are symmetric in the sense that $\eta_n(x,y) = \eta_n(y,x)$ for all $x,y \in V$.

Passing to a subsequence, we find a sequence of $G$-equivariant symmetric maps $\eta_n : V \times V \recht \Prob(V)$ and a strictly increasing sequence of integers $d_n \in \N$ such that
$$\|\eta_n(x,y) - \eta_n(x',y)\|_1 \leq 2^{-n}$$
whenever $x,x',y \in V$, $d(x,x') \leq n$ and $d(x,y) \geq d_n$. By convention, we take $d_0 = 0$ and $\eta_0(x,y) = \frac{1}{2}(\delta_x + \delta_y)$. Define the function $\rho : \N \recht \N$ given by
$$\rho(n) = \max \{ k \in \N \mid d_k \leq n\} \; .$$
Then $\rho(0) = 0$, $\rho$ is increasing, $\rho(n) \recht \infty$ when $n \recht \infty$ and $|\rho(n) - \rho(m)| \leq |n-m|$ for all $n,m \in \N$. Using the trick in \cite[Exercise 15.1.1]{BO08}, define the $G$-equivariant symmetric map
$$\mu : V \times V \recht \ell^1(V)^+ : \mu(x,y) = \sum_{n=0}^{\rho(d(x,y))} \eta_n(x,y) \; .$$
Note that $\|\mu(x,y)\|_1 = 1 + \rho(d(x,y))$. For all $x,x',y \in V$ with $\rho(d(x,y)) \geq d(x,x')$, we have
\begin{align}
\|\mu(x,y) - \mu(x',y)\|_1 & \leq 2 d(x,x') + |\rho(d(x,y)) - \rho(d(x',y))| + \sum_{n=d(x,x')}^{\rho(d(x,y))} \|\eta_n(x,y) - \eta_n(x',y)\|_1  \notag\\
& \leq 3 d(x,x') + \sum_{n=d(x,x')}^{\rho(d(x,y))} 2^{-n} \leq 3 d(x,x') + 2 \; .\label{eq.est-mu-mu-prime}
\end{align}
Define the $G$-equivariant symmetric map
$$\eta : V \times V \recht \Prob(V) : \eta(x,y) = \|\mu(x,y)\|_1^{-1} \, \mu(x,y) \; .$$
Since $\|\mu(x,y)\|_1 = 1 + \rho(d(x,y))$, it follows from \eqref{eq.est-mu-mu-prime} that
$$\|\eta(x,y) - \eta(x',y)\|_1 \leq \frac{2(3 d(x,x') + 2)}{1 + \rho(d(x,y))}$$
for all $x,x',y \in V$ with $\rho(d(x,y)) \geq d(x,x')$. This implies that for every $n \in \N$, there exists a $\kappa_n \in \N$ such that
$$\|\eta(x,y) - \eta(x',y)\|_1 < \frac{1}{n}$$
for all $x,x',y \in V$ with $d(x,x') \leq n$ and $d(x,y) \geq \kappa_n$.

Fix a base point $x_0 \in V$ and define the continuous map $\gamma : G \recht \Prob(V) = \gamma(g) = \eta(x_0 , g \cdot x_0)$. Since
$$\gamma(gh) = \eta(x_0, gh \cdot x_0) = g \cdot \eta(g^{-1} \cdot x_0, h \cdot x_0) \quad\text{and}\quad g \cdot \gamma(h) = g \cdot \eta(x_0, h \cdot x_0) \; ,$$
we find that $\lim_{h \recht \infty} \|\gamma(gh) - g \cdot \gamma(h)\|_1 = 0$ uniformly on compact sets of $g \in G$. Since
$$\gamma(hk) = \eta(x_0,hk \cdot x_0) = \eta(hk \cdot x_0, x_0) = h \cdot \eta(k \cdot x_0, h^{-1} \cdot x_0) \quad\text{and}\quad \gamma(h) = h \cdot \eta(x_0, h^{-1} \cdot x_0) \; ,$$
we find that
$$\|\gamma(hk) - \gamma(h)\|_1 = \|\eta(k \cdot x_0, h^{-1} \cdot x_0) - \eta(x_0, h^{-1} \cdot x_0)\|_1 \; ,$$
so that also $\lim_{h \recht \infty} \|\gamma(hk) - \gamma(h)\|_1 = 0$ uniformly on compact sets of $k \in G$.

As in the proof of 2, there exists a $G$-equivariant isometric map $\Prob(V) \recht \cS(G)$, so that $G$ has property~(S).

5.\ By \cite{CH88}, $G$ is weakly amenable. By \cite[Proof of Th\'{e}or\`{e}me 4.4]{Sk88}, $G$ has property~(S).
\end{proof}

In the proof of Proposition \ref{prop.groups-prop-S}, we used the following stability result for property~(S). One can actually prove that property~(S) is stable under measure equivalence of locally compact groups, but for our purposes, the following elementary lemma is sufficient.

\begin{lemma}\label{lem.stability}
Let $G$ be a locally compact group and $K < G_0 < G$ closed subgroups such that $K$ is compact and normal in $G_0$, and $G_0$ is open and of finite index in $G$. If $G_0/K$ has property~(S), then also $G$ has property~(S).
\end{lemma}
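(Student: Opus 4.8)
The plan is to prove the statement by two successive reductions: first kill the compact normal kernel (pass from $G_0/K$ to $G_0$), and then pass from the open finite-index subgroup $G_0$ up to $G$. Throughout I write $G = \bigsqcup_{i=1}^m g_i G_0$ with $g_1 = e$ for the left cosets, and $\iota : \cS(G_0) \recht \cS(G)$ for the extension-by-zero map, which is available because $G_0$ is open; $\iota$ is isometric and $G_0$-equivariant, and $g_i \cdot \iota(F)$ is supported in the coset $g_i G_0$.

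For the first reduction, let $\pi : G_0 \recht Q := G_0/K$ be the quotient map and $\eta_Q : Q \recht \cS(Q)$ witness property~(S) for $Q$. Since $K$ is compact, $\pi$ is proper, so $k \recht \infty$ in $G_0$ if and only if $\pi(k) \recht \infty$ in $Q$. Using Weil's formula with compatibly normalized Haar measures on $G_0, K, Q$, pulling a density back along $\pi$ defines an isometric, $G_0$-equivariant map $\Lambda : \cS(Q) \recht \cS(G_0)$, $\Lambda(F) = F \circ \pi$, where $G_0$ acts on $\cS(Q)$ through $\pi$ and left translation. I would set $\eta_0 := \Lambda \circ \eta_Q \circ \pi : G_0 \recht \cS(G_0)$; since $\Lambda$ is an isometry intertwining the actions, the defect $\|\eta_0(gkh) - g \cdot \eta_0(k)\|_1$ equals $\|\eta_Q(\pi(g)\pi(k)\pi(h)) - \pi(g) \cdot \eta_Q(\pi(k))\|_1$, which tends to $0$ uniformly on compact sets of $g,h$ by property~(S) of $Q$ and properness of $\pi$. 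This establishes property~(S) for $G_0$.

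The second reduction is where the real difficulty lies. The naive guess $\eta(g) = g_{[g]} \cdot \iota(\eta_0(\langle g \rangle))$, using the left-coset index $[g]$ and the $G_0$-part $\langle g\rangle$, \emph{fails}: right multiplication by an $h \notin G_0$ throws the density into a different coset, so $\eta(kh)$ and $\eta(k)$ have disjoint support and stay at $L^1$-distance $2$. The fix is to spread the density over all cosets. Fixing right-coset representatives $s_j$ (so $g_i^{-1} g \in G_0 s_j$ for a unique $j$) and writing $\beta_i(g) := g_i^{-1} g\, s_j^{-1} \in G_0$ for the $G_0$-part of $g_i^{-1}g$, I would define
\[
\eta(g) := \frac{1}{m} \sum_{i=1}^m g_i \cdot \iota\big(\eta_0(\beta_i(g))\big) .
\]
This is continuous (the coset indices are locally constant, as cosets are clopen) and $\cS(G)$-valued. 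The point is that left multiplication by $a \in G$ permutes the blocks via $a g_i = g_{\tau(i)} c_i$ with $c_i \in G_0$ ranging in a \emph{fixed} compact set; a short computation matching right cosets forces the right-coset representatives to agree, so after reindexing the $i'$-block argument of $a \cdot \eta(g)$ becomes exactly $c\,\beta_{\tau^{-1}(i')}(g) = \beta_{i'}(ag)$. The instance $c \cdot \eta_0(\xi) \approx \eta_0(c\xi)$ of property~(S) of $G_0$ then yields $\|a \cdot \eta(k) - \eta(ak)\|_1 \recht 0$. Dually, right multiplication by $h$ replaces each $\beta_i(k)$ by $\beta_i(k) v_i$ with $v_i \in G_0$ in a fixed compact set, and the right-invariance instance $\eta_0(\xi b) \approx \eta_0(\xi)$ gives $\|\eta(kh) - \eta(k)\|_1 \recht 0$. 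Combining the two (right-invariance applied to $gk$, then left-equivariance) gives $\|\eta(gkh) - g \cdot \eta(k)\|_1 \recht 0$ uniformly on compacta, i.e.\ property~(S) for $G$.

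The main obstacle is exactly this second reduction: recognizing that one must average over the cosets rather than follow a single coset, and then verifying \emph{uniformly in the finitely many blocks} that left multiplication shifts each block argument only by a uniformly bounded amount of $G_0$. The two supporting facts that make the blockwise application of property~(S) of $G_0$ legitimate and uniform are that $\beta_i(k) \recht \infty$ in $G_0$ whenever $k \recht \infty$ in $G$ (since $g_i^{-1}k \recht \infty$ and the $s_j$ form a finite set), and that all the perturbing elements $c_i, v_i$ remain in fixed compact subsets of $G_0$ as $g,h$ range over a fixed compact set. I expect the bookkeeping of the coset identity $\beta_{i'}(ag) = c\,\beta_{\tau^{-1}(i')}(g)$ to be the only delicate computation; everything else is routine.
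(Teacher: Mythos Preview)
Your proposal is correct and follows essentially the same approach as the paper: first lift property~(S) from $G_0/K$ to $G_0$ via the pullback $\cS(G_0/K)\hookrightarrow\cS(G_0)$, then average over the finitely many cosets to define $\eta$ on $G$. The paper's formula $\eta(g)=\frac{1}{n}\sum_i g_i^{-1}\cdot\eta_0(\pi(g_i g))$ (with right cosets $G_0 g_i$ and $\pi(hg_j)=h$) is exactly your averaged map after the relabeling $g_i\leftrightarrow g_i^{-1}$, $s_j\leftrightarrow g_j^{-1}$; the paper simply omits the blockwise verification you spell out.
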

\begin{proof}
Since $L^1(G_0/K) \subset L^1(G_0)$, we have a $G_0$-equivariant map $\cS(G_0/K) \recht \cS(G_0)$. So, property~(S) for $G_0/K$ implies property~(S) for $G_0$. Write $G$ as the disjoint union of $G_0 g_i$, $i=1,\ldots,n$. Define the continuous map $\pi : G \recht G_0$ given by $\pi(g g_i) = g$ for all $g \in G_0$ and $i \in \{1,\ldots,n\}$. View $\cS(G_0) \subset \cS(G)$. Given $\eta_0 : G_0 \recht \cS(G_0)$ as in the definition of property~(S), define the continuous map
$$\eta : G \recht \cS(G) : \eta(g) = \frac{1}{n} \sum_{i=1}^n g_i^{-1} \cdot \eta_0(\pi(g_i g)) \; .$$
One checks that $\lim_{h \recht \infty} \|\eta(ghk) - g \cdot \eta(h)\|_1 = 0$ uniformly on compact sets of $g,k \in G$. So, $G$ again has property~(S).
\end{proof}

\end{document}